\theoremstyle{plain}
\newtheorem{theorem}{Theorem}[section]
\newtheorem{lemma}[theorem]{Lemma}
\newtheorem{proposition}[theorem]{Proposition}
\newtheorem{corollary}[theorem]{Corollary}
\def\bc{\mathbb{C}}
\def\bz{\mathbb{Z}}
\def\br{\mathbb{R}}
\def\fB{\mathfrak{B}}
\def\fN{\mathfrak{N}}
\def\fT{\mathfrak{T}}
\def\fV{\mathfrak{V}}
\def\fZ{\mathfrak{Z}}
\def\fs{\mathfrak{s}}
\def\fo{\mathfrak{o}}
\newcounter{commentlabel}
\begin{document}
  \title{Central conjugate locus of 2-step nilpotent Lie groups}
\author{Patrick Eberlein}
\date{\today}
\maketitle

\section{Introduction}

\noindent  The goals of this article are twofold : 1)  to compute the conjugate locus of a geodesic that lies in the center of a simply connected, 2-step nilpotent Lie group N with a left invariant metric  2)  compare the isometry types of two such nilpotent Lie groups $N_{1}, N_{2}$ whose conjugate loci for central geodesics are "the same" in a suitable sense.  The first goal is reached in Proposition 4.1 and Corollary 4.5.  The second goal is elusive.  We prove a partial result in Proposition 9.1.
\newline

\noindent We use [E1] as a general reference.  Let $\fN$ denote a 2-step nilpotent Lie algebra ; that is $[[X,Y],Z] = 0$ for all $X,Y,Z \in \fN$ but $\fN$ is not abelian. Let N denote the simply connected nilpotent Lie group with Lie algebra $\fN$.  Let $[\fN,\fN]$ denote the commutator subalgebra of $\fN$.  The Lie algebra $\fN$ is said to be of $\mathit{type~ (p,q)}$ if $[\fN , \fN]$ has dimension p and codimension q in $\fN$.   Let $\fZ$ denote the center of $\fN$ and  note that $[\fN , \fN] \subseteq \fZ$. 
\newline

\noindent  Let $\langle , \rangle$ denote an inner product on $\fN$, and let  $\langle , \rangle$ also denote the corresponding left invariant metric on N.  We write $\fN = \fV \oplus \fZ$, where $\fV = \fZ^{ \perp}$.  The Lie group exponential map $exp : \fN \rightarrow N$ is known to be a diffeomorphism ; see for example [R, p. 6].
\newline

\noindent Let $Z \in \fZ$ be given.  Let $j(Z) : \fV \rightarrow \fV$ be the skew symmetric linear  map given by $\langle j(Z) X , Y \rangle = \langle [X,Y] , Z \rangle$ for all $X,Y \in \fV$.  Let $\gamma_{Z}(t) = exp(tZ)$.  The curve $\gamma_{Z}$  is a geodesic in N, as one can see, for example from Proposition 3.1 of [E1] .  Let   $R_{Z} : \fV \rightarrow \fV$ be the symmetric curvature operator given by $R_{Z}(X) = R(X,Z)Z$.  We recall from Proposition 2.3 of [E1]  that $R_{Z} = - \frac{1}{4}~j(Z)^{2}$.  Since j(Z) is skew symmetric the eigenvalues of $R_{Z}$ are nonnegative for all Z $\in \fZ$.  It follows that $Ric(Z,Z) = trace~R_{Z} \geq 0$, with equality $\Leftrightarrow j(Z) \equiv 0 \Leftrightarrow$ Z is orthogonal to $[\fN , \fN]$.  If $Z \in [\fN , \fN], Z\neq 0$, then $Ric(\gamma_{Z}Õ(t), \gamma_{Z}Õ(t)) = Ric(Z,Z) > 0$ for all $t \in \br$ ; see for example Proposition 2.5 of [E1].  It then follows that $\gamma_{Z}$ must have conjugate points along $\gamma_{Z}$ ; see for example the proof of Theorem 19.4 in [M] or Theorem 1.26 of [CE].   
\newline

\noindent  The discussion above shows that if $[\fN , \fN]$ is a proper subspace of $\fZ$, the center of $\fN$, and if Z is a unit vector in $\fZ$ orthogonal to $[\fN , \fN]$, then $j(Z) \equiv 0$ and $R_{Z} = - \frac{1}{4}~j(Z)^{2} \equiv 0$ on $\fV$.  In fact, $R_{Z} \equiv 0$ on $\fN$ by the formulas in section 2.3 of [E1]. The calculations in section 4 of this article simplify in this case and show that the geodesic $\gamma_{Z}$ has no conjugate points.  Moreover, $\gamma_{Z}$ is tangent to the Euclidean factor of N by Proposition 2.7 of [E1].
\newline

\noindent  In the sequel we shall restrict attention to those Z in $[\fN , \fN]$ for reasons that are apparent from the discussion above.
\newline

\noindent Let $\fs \fo(q,\br)$ denote the real vector space of q x q skew symmetric matrices.  The map $j : [\fN, \fN] \rightarrow \fs \fo(q,\br)$ is injective by the discussion above, and hence $ p \leq dim~\fs \fo(q,\br) = \frac{1}{2}q(q-1)$.
\newline

\noindent In  sections 2 and 3  we compute the Jacobi vector fields on the geodesic $\gamma_{Z}$ in terms of an eigenbasis of $R_{Z}$.  In section 4 we determine the conjugate locus and the multiplicities of $\gamma_{Z}$.  The conjugate locus of an arbitrary geodesic of N was computed in [JLP] in the special case that $j(Z)^{2} = \langle S(Z) , Z \rangle A$ for all $Z \in \fZ$, where $S : \fZ \rightarrow \fZ$ is a positive definite symmetric linear operator, and $A : \fV \rightarrow \fV$ is a fixed negative definite symmetric linear operator, independent of Z.  In this case j(Z) is nonsingular for all nonzero $Z \in \fZ$ and hence q must be even.
\newline 

\noindent We say that $t > 0$ is a $\mathit{conjugate~ value}$ of $\gamma_{Z}$ if $\gamma_{Z}(t)$ is conjugate along $\gamma_{Z}$ to $\gamma_{Z}(0) = e$, the identity of N.  For $Z \in [\fN, \fN]$ and $t > 0$ we define $c(Z,t) = \{\mu \in \frac{2 \pi i}{t}\bz^{+} : \mu~\rm{is~an~eigenvalue~of }~j(Z) \}$. For $\mu \in c(Z,t)$ let $m(\mu)$ denote the multiplicity of $\mu$ as an eigenvalue of j(Z). Let $m(t)$ denote the multiplicity of t as a conjugate value of $\gamma_{Z}$.  
\newline  

\noindent In Corollary 4.5 we show that 1)  $t > 0$ is a conjugate value of $\gamma_{Z} \Leftrightarrow$ c(Z,t) is nonempty and 2)  For a conjugate value $t > 0, m(t) = 2~\sum_{\mu \in c(Z,t)} m(\mu)$.  In particular every conjugate value $t > 0$ has even multiplicity.  We given an alternate description of the conjugate locus of $\gamma_{Z}$, but without multiplicities, in Proposition 4.1 and with multiplicities in Proposition 4.4.
\newline 

\noindent Let $t > 0$ be a conjugate value.  Note that $c(Z,t) \subseteq c(Z,kt)$ for all positive integers k.  Hence $m(t) \leq m(kt)$ and kt is a conjugate value of $\gamma_{Z}$ for all positive integers k.
\newline

\noindent  Corollary 4.5 shows that for nonzero Z in $[\fN , \fN]$ the eigenvalues and their multiplicities of j(Z) determine the conjugate values and their multiplicities of the geodesic $\gamma_{Z}$.  In section 5 we consider the converse assertion.   A conjugate value $t > 0$ for $\gamma_{Z}$ is $\mathit{primitive}$ if t cannot be written as $mt_{0}$ for some integer $m \geq 2$ and some conjugate value $t_{0} > 0$ for $\gamma_{Z}$. In Proposition 5.1 we show that $\gamma_{Z}$ has at most $P = [\frac{1}{2}q]$ distinct primitive conjugate values, and if equality holds then the eigenvalues of j(Z) are distinct and determined by the conjugate locus of $\gamma_{Z}$.
\newline

\noindent  Let $t_{1}, ... t_{m}$ be the primitive conjugate values of $\gamma_{Z}$, where $m \leq P$ is some integer.  The discussion above shows that the set of conjugate values for $\gamma_{Z}$ is $\bigcup_{k=1}^{m} rt_{k}, r \in \bz^{+}$.
\newline

\noindent  In section 6 we review some basic facts about the topology of Grassmann manifolds that will be useful later.
\newline

\noindent  In section 7 we take a first step in determining those nonzero Z in $[\fN , \fN]$ for which $\gamma_{Z}$ has $P = [\frac{1}{2}q]$ distinct primitive conjugate values.  We describe  the space I(p,q) of isometry classes of metric, 2-step nilpotent Lie algebras $\{\fN , \langle , \rangle \}$.  We show that I(p,q) can be identified with $G(p, \fs \fo(q,\br)) / O(q,\br)$, where $G(p, \fs \fo(q,\br))$ denotes the Grassmann manifold of p-dimensional subspaces of $\fs \fo(q,\br)$ and $O(q,\br)$ acts on $G(p, \fs \fo(q,\br)$ by conjugation. The description of I(p,q) is known (cf. [GW]) but proofs seem to be lacking so I include a discussion here.
\newline

\noindent  In section 8 we show that there exists a dense open subset $U'$ in I(p,q) with the following property :  Let $\{\fN, \langle , \rangle \}$ be a metric, 2-step nilpotent Lie algebra whose isometry class lies in $U'$, and let $\{N , \langle, \rangle \}$ denote the corresponding simply connected, 2-step nilpotent Lie group with left invariant metric $\langle , \rangle$.  Then there exists a dense open subset O of full Lebesgue measure in $[\fN , \fN]$ such that the geodesic $\gamma_{Z}$ has $P = [\frac{1}{2}q]$ distinct primitive conjugate values for each nonzero element Z of O.
\newline

\noindent  In section 9 we obtain a partial answer to the question of whether the conjugacy locus of a geodesic $\gamma_{Z}, Z \in [\fN ,\fN]$, determines the eigenvalues and their multiplicities of j(Z).  More precisely, let $U'$ be the dense open subset of I(p,q) that occurs in the statement of Corollary 8.3.  Let $\{\fN_{1}, \langle , \rangle_{1} \}$ and $\{\fN_{2}, \langle , \rangle_{2} \}$ be two metric, 2-step nilpotent Lie algebras of type (p,q) with the following properties :

	1)  The isometry class of $\{\fN_{1}, \langle , \rangle_{1} \}$ lies in $U'$.
	
	2)  There exists a linear isomorphism $\varphi : [\fN_{1} , \fN_{1}] \rightarrow [\fN_{2} , \fN_{2}] $ such that for all $Z \in [\fN_{1} , \fN_{1}]$ the geodesics $\gamma_{Z}$ and $\gamma_{\varphi(Z)}$ have the same conjugate locus in the corresponding simply connected, 2-step nilpotent Lie groups $N_{1}$ and $N_{2}$.
\newline

\noindent Then $j(Z)$ and $j(\varphi(Z))$ have the same eigenvalues and multiplicities for all $Z \in [\fN_{1}, \fN_{1}]$.
\newline

\noindent  It is natural to ask if conditions 1) and 2) in the result above imply that there exists a linear isometry $\varphi : \{\fN_{1}, \langle , \rangle_{1} \} \rightarrow \{\fN_{2}, \langle , \rangle_{2} \}$ that is also a Lie algebra isomorphism.  This is known to be false if $p =2$ (cf. Theorem 2.2 of [GW]), but an affirmative answer might be possible if p is sufficiently large ; the condition 2) above becomes more stringent as p increases.
\newline

\noindent  In section 10 we include some material on polynomial maps and Zariski open sets that was needed for the proof of the result from section 9 stated above.  This material might not be familiar to a reader with a traditional background in differential geometry.
\newline

\noindent  $\mathbf{Acknowledgments}$ The author is grateful for many conversations with Ernst Heintze that showed the question above has a positive answer for $p = 2$ and $q = 3$ or $q = 4$ but suggested that the question has a negative answer for $p =2$ and q large.  The author is grateful to Carolyn Gordon for providing the reference (Theorem 2.2 of  [GW]) that confirmed a negative answer for $p =2$ and $q \neq 3,4,6$.
	
\section{Jacobi vector fields on a central geodesic}

\noindent Let $Z \in [\fN ,\fN]$, and let $  \{ i \lambda_{1}  - i \lambda_{1}, ... ,  i  \lambda_{M}, - i \lambda_{M}  \}$ be the nonzero eigenvalues of j(Z), possibly not all distinct, where $\lambda_{i} > 0$ for every i.  It follows that $\{ \frac{1}{4} \lambda_{1}^{2}, ... , \frac{1}{4} \lambda_{M}^{2} \}$ are the eigenvalues of $R_{Z}$, possibly not all distinct.   Choose a basis $\fB = 
 \{A_{1}, ... , A_{r}, B_{1}, ... , B_{M}, j(Z) B_{1}, ... , j(Z) B_{M}, Z_{1}, ... , Z_{s} \}$ of $\fN$ with the following properties :
	
	a)  $Ker~j(Z)  = Ker~j(Z)^{2} = Ker~R_{Z} \cap \fV = \br-span\{A_{1}, ... , A_{r} \}$
	
	b)  $\fZ = \br -  span \{Z_{1}, ... , Z_{s} \}$
	
	c) $R_{Z}(B_{i}) = \frac{1}{4} \lambda_{i}^{2}~B_{i}$ for $1 \leq i \leq M$.
\newline

\noindent Note that $R_{Z}~ (j(Z) (B_{i})) = \frac{1}{4} \lambda_{i}^{2} ~j(Z)(B_{i})$ for $1 \leq i \leq M$ since $R_{Z}$ commutes with j(Z).  Moreover, $R_{Z} \equiv 0$ on $\fZ$ by section 2.3 of [E1]. Hence the basis $\fB$ consists of eigenvectors of $R_{Z}$.  If j(Z) is nonsingular, then $r = 0$.  If j(Z) $\equiv 0$ (i.e. Z is orthogonal to $[\fN , \fN]$), then $R_{Z} \equiv 0$ and $M = 0$.
\newline

\begin{proposition} Let $\gamma(t) = \gamma_{Z}(t)$.  Let $Y(t) =  \sum_{i=1}^{r} a_{i}(t) A_{i} (\gamma(t)) + \newline  \sum_{j=1}^{M} b_{j}(t) B_{j}(\gamma(t)) +  \sum_{j=1}^{M} c_{j}(t) (j(Z) (B_{j})(\gamma(t))  +  \sum_{\alpha = 1}^{s} d_{\alpha}(t) Z_{\alpha}(\gamma(t))$ be a vector field on the geodesic $\gamma$.  Then Y(t) is a Jacobi vector field $\Leftrightarrow$

	1)  $a_{i}(t) = c_{i} + d_{i} t$ \hspace{.5in}  for some  real numbers $c_{i},d_{i}$ and for $1 \leq i \leq r$
	
	2)  $d_{\alpha}(t) = e_{\alpha} + f_{\alpha} t$ \hspace{.4in} for some  real numbers $e_{\alpha},f_{\alpha}$ and for $1 \leq \alpha \leq s$.
	
	3)  There exist constants $\alpha_{j}, \beta_{j}, \gamma_{j}, \delta_{j}, 1 \leq i \leq M$ such that
	
	\hspace{.3in} a) $b_{j}(t) = \alpha_{j}~ cos( \lambda_{j}~t) + \beta_{j}~ sin(\lambda_{j}~t) + \gamma_{j}$.
	
	\hspace{.3in} b) $c_{j}(t) = - \frac{\beta_{j}} {\lambda_{j}}~ cos(\lambda_{j}~t) +  \frac{\alpha_{j}}{\lambda_{j}}~ sin(\lambda_{j}~t) + \delta_{j}$.
\end{proposition}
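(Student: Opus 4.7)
The plan is to substitute $Y(t)$ directly into the Jacobi equation $\nabla^2_{\gamma'} Y + R(Y,\gamma')\gamma' = 0$, decouple it block-by-block using the basis $\fB$, and integrate the resulting ODEs in each block.

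First I would record how $\nabla_{\gamma'}$ acts on the left invariant vector fields appearing in $Y$. Since $\gamma'(t) = Z \in \fZ$, the standard formulas for the Levi--Civita connection on a 2-step nilpotent Lie group with left invariant metric (cf.\ Proposition 3.1 of [E1]) give $\nabla_Z X = -\tfrac{1}{2} j(Z) X$ for $X \in \fV$ and $\nabla_Z W = 0$ for $W \in \fZ$. Property (a) of $\fB$ then yields $\nabla_{\gamma'} A_i = 0$ and $\nabla_{\gamma'} Z_\alpha = 0$, while $\nabla_{\gamma'} B_j = -\tfrac{1}{2} j(Z) B_j$ and $\nabla_{\gamma'}(j(Z) B_j) = -\tfrac{1}{2} j(Z)^2 B_j = \tfrac{\lambda_j^2}{2} B_j$, the last using $R_Z = -\tfrac{1}{4} j(Z)^2$ together with property (c). The curvature term $R(Y,\gamma')\gamma' = R_Z Y$ is read off directly from the fact that $\fB$ is an eigenbasis of $R_Z$ with $R_Z A_i = R_Z Z_\alpha = 0$ and $R_Z B_j = \tfrac{\lambda_j^2}{4} B_j$, $R_Z(j(Z)B_j) = \tfrac{\lambda_j^2}{4} j(Z) B_j$.

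Applying $\nabla_{\gamma'}$ twice to $Y(t)$ by the Leibniz rule and combining with $R_Z Y$, the Jacobi equation decouples into four scalar families. The $A_i$ and $Z_\alpha$ families immediately yield $a_i'' = 0$ and $d_\alpha'' = 0$, producing (1) and (2). For each $j$ the $B_j$- and $j(Z)B_j$-components produce the coupled system
\begin{equation*}
b_j'' + \lambda_j^2 c_j' = 0, \qquad c_j'' - b_j' = 0,
\end{equation*}
since the Hooke-type contributions $\pm\tfrac{\lambda_j^2}{4} b_j$ and $\pm\tfrac{\lambda_j^2}{4} c_j$ arising from $\nabla^2_{\gamma'} Y$ and from $R_Z Y$ cancel exactly.

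Finally I would integrate $c_j'' = b_j'$ once to get $c_j' = b_j + C$, substitute into the first equation to obtain $b_j'' + \lambda_j^2(b_j + C) = 0$ whose general solution (setting $\gamma_j = -C$) is $b_j(t) = \alpha_j \cos(\lambda_j t) + \beta_j \sin(\lambda_j t) + \gamma_j$, and then integrate $c_j' = b_j - \gamma_j$ to obtain the formula in 3(b) with a fourth integration constant $\delta_j$. The parameter count $2r + 2s + 4M = 2\dim \fN$ confirms that this recovers the full $2 \dim \fN$-dimensional space of Jacobi vector fields along $\gamma_Z$, so both directions of the equivalence follow at once. The main obstacle is purely bookkeeping: one must trace the cross-terms between $B_j$ and $j(Z) B_j$ through $\nabla_{\gamma'}^2 Y$ carefully enough to see the Hooke-type cancellation, after which the integration is routine.
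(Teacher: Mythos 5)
Your proposal is correct and follows the same route as the paper: expand $\nabla_{\gamma'}^2 Y + R_Z Y$ in the eigenbasis $\fB$ using the connection formulas $\nabla_Z X = -\tfrac12 j(Z)X$ and $\nabla_Z Z'=0$, observe the $\pm\tfrac{\lambda_j^2}{4}$ terms cancel, decouple into $a_i''=0$, $d_\alpha''=0$, $b_j''+\lambda_j^2 c_j'=0$, $c_j''-b_j'=0$, and integrate. The paper leaves the last integration as ``routine calculations,'' which you carry out explicitly and correctly; the only trivial discrepancy is that the connection formula is Proposition 2.2 of [E1], not 3.1.
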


\begin{proof}  We recall from Proposition 2.2 of [E1] that $\nabla_{\gamma'(t)} X = \nabla_{Z}X(\gamma(t)) = \newline - \frac{1}{2}~ j(Z)(X)(\gamma(t))$ for all X $\in \fV$ and  $\nabla_{\gamma'(t)} Z' = \nabla_{Z}Z'(\gamma(t)) = 0$ for all $Z' \in \fZ$.  Furthermore $\nabla_{\gamma'(t)}~A_{i} = (\nabla_{Z}A_{i})(\gamma(t)) = - \frac{1}{2}~(j(Z)A_{i})(\gamma(t)) = 0$ for $1 \leq i \leq r$.  Note that $R_{Z}A_{i} = - \frac{1}{4} j(Z)^{2}A_{i} = 0$ for $1 \leq i \leq r$  Moreover, $- \frac{1}{2} j(Z)^{2}(B_{i}) = 2~R_{Z}(B_{i}) = \frac{1}{2} \lambda_{i}^{2} B_{i}$ for $1 \leq i \leq M$.  Routine computations now yield

\hspace{.5in} (1)  $Y'(t) = \sum_{i=1}^{r} a_{i}'(t) A_{i}(\gamma(t)) + \sum_{j=1}^{M} \{b_{j}'(t) + \frac{1}{2} \lambda_{j}^{2}~c_{j}(t)\}~ B_{j}(\gamma(t))$ \newline $\sum_{j=1}^{M}  \{ - \frac{1}{2} b_{j}(t) + c_{j}'(t) \}~(j(Z)(B_{j}))(\gamma(t)) + \sum_{\alpha = 1}^{s} d_{\alpha}'(t) Z_{\alpha}(\gamma(t))$

\noindent Similarly we obtain

\hspace{.5in} (2)  $Y''(t) = \sum_{i=1}^{r} a_{i}''(t) A_{i}(\gamma(t)) + \sum_{j=1}^{M} \{b_{j}''(t) +  \lambda_{j}^{2}~c_{j}'(t)~ - \frac{1}{4}\lambda_{j}^{2}~b_{j}(t) \} ~B_{j}(\gamma(t))$ \newline + $\sum_{j=1}^{M}  \{ - b_{j}'(t) + c_{j}''(t) - \frac{1}{4}\lambda_{j}^{2} c_{j}(t) \}~(j(Z)(B_{j}))(\gamma(t)) + \sum_{\alpha = 1}^{s} d_{\alpha}''(t) Z_{\alpha}(\gamma(t))$

\noindent We compute

\hspace{.5in} (3) $(R_{Z} Y)(\gamma (t) = \sum_{j=1}^{M}  \frac{1}{4}\lambda_{j}^{2}~ b_{j}(t) B_{j}(\gamma(t)) +  \sum_{j=1}^{M}  \frac{1}{4}\lambda_{j}^{2}~c_{j}(t) (j(Z) (B_{j})(\gamma(t)))$

\noindent From (2) and (3) we obtain

 \hspace{.5in} (4) $Y''(t) + (R_{Z}Y)(\gamma(t)) = \sum_{i=1}^{r} a_{i}''(t) A_{i}(\gamma(t)) + \sum_{j=1}^{M} \{b_{j}''(t) +  \lambda_{j}^{2}~c_{j}'(t)\}  ~B_{j}(\gamma(t))$ \newline + $\sum_{j=1}^{M}  \{ - b_{j}'(t) + c_{j}''(t) \}~(j(Z)(B_{j}))(\gamma(t)) + \sum_{\alpha = 1}^{s} d_{\alpha}''(t) Z_{\alpha}(\gamma(t))$

 \noindent  From the Jacobi equation $0 = Y''(t) + (R_{Z}Y)(\gamma(t))$ we obtain

\hspace{.5in} (5)

\hspace{.7in} (a)  $a_{i}''(t) \equiv 0 \hspace{1.5in}$ for $1 \leq i \leq r$

 \hspace{.7in} (b) $d_{\alpha}''(t) \equiv 0 \hspace{1.5in}$ for $1 \leq \alpha \leq s$

 \hspace{.7in} (c) $b_{j}''(t) +  \lambda_{j}^{2} c_{j}'(t) \equiv 0$  \hspace{.7in} for $1 \leq j \leq M$

  \hspace{.7in} (d) $c_{j}''(t) - b_{j}'(t) \equiv 0$  \hspace{.9in} for $1 \leq j \leq M$

 \noindent  The assertion of the Proposition now follows from routine calculations.
 \newline
\end{proof}

\section{A~ natural~ basis~ for~ the~ Jacobi~ vector~ fields~ on~ $\gamma_{Z}$}

\begin{proposition}  The following vector fields Y(t) form a basis of the Jacobi vector fields on $\gamma_{Z}$, and their first covariant derivatives obey the given relations :

1)  $Y(t) = A_{i}(\gamma(t)), \hspace{.3in} Y'(t) \equiv 0 \hspace{.97in} 1 \leq i \leq r$
\newline

2)  $Y(t) = t~A_{i}(\gamma(t)), \hspace{.2in} Y'(t) = A_{i}(\gamma(t)) \hspace{.5in} 1 \leq i \leq r$
\newline

3)  $Y(t) = Z_{j}(\gamma(t)), \hspace{.3in} Y'(t) \equiv 0 \hspace{.97in} 1 \leq j \leq s$
\newline 

4)  $Y(t) = t~Z_{j}(\gamma(t)), \hspace{.2in} Y'(t) \equiv Z_{j}(\gamma(t)) \hspace{.5in} 1 \leq j \leq s$
\newline

5)  $Y(t) = V_{j}(t) = B_{j}(\gamma(t)) \hspace{.2in} ; \hspace{.2in} Y(t) = W_{j}(t) = j(Z) B_{j}(\gamma(t)) \hspace{.2in} 1 \leq j \leq M$

    \hspace{.15in} $V_{j}'(t) = - \frac{1}{2} W_{j}(t) \hspace{.2in}; \hspace{.2in} W_{j}'(t) = \frac{1}{2} \lambda_{j}^{2} V_{j}(t)\hspace{.2in} 1 \leq j \leq M$ 
\newline
    
 6)  $Y(t) = X_{j}(t) = cos(\lambda_{j}t)~ B_{j}(\gamma(t)) + \frac{1}{\lambda_{j}}~ sin(\lambda_{j}t)~j(Z) B_{j}(\gamma(t))$
 
\hspace{.15in} $Y(t) = Y_{j}(t) = sin(\lambda_{j}t)~ B_{j}(\gamma(t)) - \frac{1}{\lambda_{j}}~ cos(\lambda_{j}t)~j(Z) B_{j}(\gamma(t))$

\hspace{.15in} $X_{j}'(t) = - \frac{1}{2} \lambda_{j}~Y_{j}(t) \hspace{.2in} ;  \hspace{.2in} Y_{j}'(t) =  \frac{1}{2} \lambda_{j}~X_{j}(t) \hspace{.2in} 1 \leq j \leq M$
\end{proposition}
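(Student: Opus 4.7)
The plan is to reduce everything to Proposition 2.1, which already classifies the Jacobi fields on $\gamma_Z$ in terms of the coefficient functions $a_i, b_j, c_j, d_\alpha$ relative to the basis $\fB$. I would proceed in three stages: match each of the six families to an allowed choice of constants, count dimensions to see that the list is a basis, and then compute $Y'(t)$ directly from the connection formulas already displayed in the proof of Proposition 2.1.

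First, I verify that each listed field is Jacobi by reading off the constants in Proposition 2.1. For family (1), take $c_i=1, d_i=0$; for (2), $c_i=0, d_i=1$; analogously for (3) and (4) with $e_\alpha, f_\alpha$. For family (5), the choice $\alpha_j=\beta_j=0,\gamma_j=1,\delta_j=0$ produces $V_j=B_j(\gamma(t))$, and $\alpha_j=\beta_j=\gamma_j=0,\delta_j=1$ produces $W_j=j(Z)B_j(\gamma(t))$; the constant $b_j, c_j$ trivially satisfy the ODEs (5c)–(5d) of the previous proof. For family (6), the choices $\alpha_j=1,\beta_j=0,\gamma_j=\delta_j=0$ and $\alpha_j=0,\beta_j=1,\gamma_j=\delta_j=0$ give exactly $X_j$ and $Y_j$.

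Next, the dimension count: the listed fields number $2r+2s+2M+2M=2(r+s+2M)=2\dim\fN$, which matches the dimension of the space of Jacobi fields along $\gamma_Z$. Linear independence follows because the vectors $A_i, Z_\alpha, B_j, j(Z)B_j$ are a basis of $\fN$, so any linear relation collapses inside each coefficient slot to a relation among the scalar functions $\{1,t\}$ (for the $A_i$ and $Z_\alpha$ slots) and among $\{\cos(\lambda_j t),\sin(\lambda_j t),1\}$ (for the $B_j$ and $j(Z)B_j$ slots), all of which are linearly independent on $\br$.

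Finally, to compute $Y'(t)$ I would invoke the connection formulas from the proof of Proposition 2.1: $\nabla_{\gamma'(t)}A_i=0$, $\nabla_{\gamma'(t)}Z_\alpha=0$, $\nabla_{\gamma'(t)}B_j=-\tfrac12 j(Z)B_j$, and $\nabla_{\gamma'(t)}(j(Z)B_j)=-\tfrac12 j(Z)^2 B_j=2R_ZB_j=\tfrac12\lambda_j^2 B_j$ (using $R_Z=-\tfrac14 j(Z)^2$ and $R_ZB_j=\tfrac14\lambda_j^2 B_j$). Lines (1)–(4) are immediate. For (5) one reads directly $V_j'=-\tfrac12 W_j$ and $W_j'=\tfrac12\lambda_j^2 V_j$. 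For (6), a brief product-rule calculation gives
\[
X_j'(t) = -\lambda_j\sin(\lambda_j t)B_j + \cos(\lambda_j t)\bigl(-\tfrac12 j(Z)B_j\bigr) + \cos(\lambda_j t)\,j(Z)B_j + \tfrac{1}{\lambda_j}\sin(\lambda_j t)\bigl(\tfrac12\lambda_j^2 B_j\bigr),
\]
which collapses to $-\tfrac12\lambda_j Y_j(t)$; the formula for $Y_j'(t)$ is analogous. There is no genuine obstacle here: the only care needed is to remember that the second formula $\nabla_{\gamma'(t)}(j(Z)B_j)=\tfrac12\lambda_j^2 B_j$ relies on $B_j$ being an eigenvector of $R_Z$, so the cross-term cancellations in (6) are forced by the eigenvalue condition (c) in the choice of $\fB$.
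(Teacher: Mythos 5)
Your proposal is correct and follows essentially the same strategy as the paper: identify each listed field as a specialization of the constants in Proposition 2.1, and then use the dimension count against $\dim J(\gamma_Z)=2\dim\fN$ to conclude they form a basis. The one small divergence is in which half of "basis" you verify directly: the paper shows the listed fields span $J(\gamma_Z)$ by writing an arbitrary Jacobi field as a linear combination of them and then deduces independence from the count, whereas you argue independence directly (each basis slot of $\fN$ isolates a relation among $\{1,t\}$ or $\{\cos\lambda_j t,\sin\lambda_j t,1\}$) and deduce spanning from the count. Both are fine; your covariant-derivative verifications for families (5) and (6), which the paper states in the Proposition but does not spell out in its proof, are also correct and rest on exactly the right identities, namely $\nabla_{\gamma'}B_j=-\tfrac12 j(Z)B_j$ and $\nabla_{\gamma'}(j(Z)B_j)=-\tfrac12 j(Z)^2B_j=\tfrac12\lambda_j^2 B_j$.
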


\begin{proof}  The vector fields in 1) through 4) above are clearly Jacobi vector fields on $\gamma_{Z}$ by the previous result.  The vector fields $V_{j},W_{j}$ in 5) correspond to the vector fields in 3) of Proposition 2.1 described by $\gamma_{j} = 1, \alpha_{j} = \beta_{j} = \delta_{j} = 0$ and $\delta_{j} = 1, \alpha_{j} = \beta_{j} = \gamma_{j} = 0$ respectively.  The vector fields $X_{j},Y_{j}$ in 6) correspond to the vector fields in 3) of Proposition 1.1 described by $\alpha_{j} = 1, \beta_{j} = \gamma_{j} = \delta_{j} = 0$ and $\beta_{j} = 1, \alpha_{j} = \gamma_{j} = \delta_{j} = 0$ respectively.
\newline  

 \noindent If the vector field Y(t) in Proposition 2.1 is a Jacobi vector field, then it is routine to show that $Y(t) = \sum_{j=1}^{M} \{\alpha_{j} X_{j}(t) + \beta_{j} Y_{j}(t) + \gamma_{j} V_{j}(t) + \delta_{j} W_{j}(t) \} + \sum_{i=1}^{r} (c_{i} + d_{i} t) A_{i}(\gamma(t)) + \sum_{\alpha = 1}^{s} (e_{\alpha} + d_{\alpha} t) Z_{\alpha}(\gamma(t))$.  Hence the vector fields listed above in 1) through 6) span the vector space $J(\gamma_{Z})$ of Jacobi vector fields on $\gamma_{Z}$.  These vector fields are linearly independent since the number of them is $2 \cdot$ dimension $\fN$, which is also the dimension of $J(\gamma_{Z})$.
 \end{proof}

\section{Conjugate~points~ on~ $\gamma_{Z}$}

\begin{proposition}  Let Z be a nonzero element of $[\fN , \fN]$.  The point $\gamma_{Z}(t)$ is conjugate to $\gamma_{Z}(0) = e \Leftrightarrow t =  \frac{2 m \pi}{\lambda}$ for some positive integer m and some positive number $\lambda$ such that $i \lambda$ is an eigenvalue of j(Z).
\end{proposition}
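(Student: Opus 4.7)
The plan is to use the explicit basis of Jacobi vector fields on $\gamma_Z$ produced in Proposition 3.1 and determine exactly when a nonzero linear combination of them can vanish at both $t=0$ and some later $t>0$. Writing the most general Jacobi field as
$$Y(t) = \sum_{i=1}^r (c_i + d_i t)\,A_i(\gamma(t)) + \sum_{\alpha=1}^s (e_\alpha + f_\alpha t)\,Z_\alpha(\gamma(t)) + \sum_{j=1}^M \bigl[\alpha_j X_j(t) + \beta_j Y_j(t) + \gamma_j V_j(t) + \delta_j W_j(t)\bigr],$$
the condition $Y(0)=0$, together with the linear independence of the basis $\fB$ of $\fN$, immediately forces $c_i=0$, $e_\alpha=0$, $\gamma_j = -\alpha_j$, and $\delta_j = \beta_j/\lambda_j$, since $V_j(0)=X_j(0)=B_j$ and $W_j(0)=j(Z)B_j$, $Y_j(0)=-\frac{1}{\lambda_j}j(Z)B_j$.

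Next, I substitute these relations and evaluate $Y$ at a generic $t>0$. The non-oscillatory part becomes $\sum_i d_i t\, A_i(\gamma(t)) + \sum_\alpha f_\alpha t\, Z_\alpha(\gamma(t))$, and using again that $\fB$ is a basis and $t\neq 0$, requiring this to cancel forces $d_i = 0$ and $f_\alpha = 0$. The oscillatory remainder splits, block by block in $j$, into a sum of terms each supported in the two-dimensional plane $\mathrm{span}\{B_j,\,j(Z)B_j\}$. Since these planes are linearly independent across different $j$ (they sit in the basis $\fB$), $Y(t)=0$ is equivalent to the vanishing of each block separately, i.e. to the $2\times 2$ linear system
$$\begin{pmatrix} \cos(\lambda_j t) - 1 & \sin(\lambda_j t) \\[2pt] \sin(\lambda_j t) & 1-\cos(\lambda_j t) \end{pmatrix} \begin{pmatrix} \alpha_j \\ \beta_j \end{pmatrix} = \begin{pmatrix}0\\0\end{pmatrix}$$
for each $j$, with at least one $j$ having a nontrivial solution $(\alpha_j,\beta_j)$.

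A direct computation gives determinant $-(1-\cos(\lambda_j t))^2 - \sin^2(\lambda_j t) = -2(1-\cos(\lambda_j t))$, which vanishes precisely when $\cos(\lambda_j t) = 1$, equivalently $\lambda_j t = 2m\pi$ for some $m\in\mathbb{Z}^+$ (using $t,\lambda_j > 0$). Hence a nonzero Jacobi field vanishing at $0$ and at $t>0$ exists if and only if $t = 2m\pi/\lambda_j$ for some $j \in \{1,\dots,M\}$ and some $m\in\bz^+$, which is exactly the asserted condition since $\{\pm i\lambda_j\}_{j=1}^M$ are the nonzero eigenvalues of $j(Z)$.

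The only subtle point is justifying the block decoupling: one must be careful that although $R_Z$ may have repeated eigenvalues (so different indices $j$ can share the same $\lambda_j$), the vectors $B_j$ and $j(Z)B_j$ are chosen as distinct basis elements of $\fN$, so the coefficients of each $B_j$ and each $j(Z)B_j$ in $Y(t)$ must vanish independently. This is the only place the hypothesis $Z \in [\fN,\fN]\setminus\{0\}$ enters beyond Proposition 3.1 — it guarantees $M\geq 1$, and thus that oscillatory Jacobi fields actually exist.
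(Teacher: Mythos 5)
Your proof is correct and takes essentially the same approach as the paper: parametrize $J_0(\gamma)$ via Proposition 3.1, use $t_0>0$ to kill the $tA_i$ and $tZ_\alpha$ parts, and decouple into $2\times 2$ trigonometric blocks in the $\mathrm{span}\{B_j,\,j(Z)B_j\}$ planes. The paper packages the same reduction by explicitly naming the Jacobi fields $\xi_j = X_j - V_j$ and $\eta_j = Y_j + \lambda_j^{-1}W_j$ (Lemma 4.3, which your substitution $\gamma_j=-\alpha_j$, $\delta_j=\beta_j/\lambda_j$ reproduces) and deduces $(\alpha_j^2+\beta_j^2)\sin(\lambda_j t_0)=0$ and $(\alpha_j^2+\beta_j^2)(\cos(\lambda_j t_0)-1)=0$ by direct manipulation of the $2\times 2$ system rather than by computing your determinant $-2(1-\cos(\lambda_j t))$, so the algebraic content is identical.
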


\noindent  Before proving this result we point out an immediate consequence

\begin{corollary}  Let Z be a nonzero element of $[\fN , \fN]$.  Let $\gamma_{Z}(t)$ be conjugate to $\gamma_{Z}(0) = e$ for some $t > 0$.  Then $\gamma_{Z}(mt)$ is conjugate to $\gamma_{Z}(0)$ for every positive integer m.
\end{corollary}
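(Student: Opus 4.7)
The corollary is essentially an immediate consequence of the characterization given in Proposition 4.1, so my plan is to exhibit it as such rather than redo any Jacobi field analysis.

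First I would invoke Proposition 4.1 in the forward direction: since $\gamma_Z(t)$ is conjugate to $\gamma_Z(0) = e$ for some $t > 0$, there exist a positive integer $n$ and a positive real $\lambda$ with $i\lambda$ an eigenvalue of $j(Z)$ such that $t = 2n\pi/\lambda$.

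Next, for any positive integer $m$, I would simply compute $mt = 2(mn)\pi/\lambda$. Since $mn$ is again a positive integer and $i\lambda$ is still an eigenvalue of $j(Z)$, the value $mt$ again has the form required by Proposition 4.1. Applying Proposition 4.1 in the reverse direction gives that $\gamma_Z(mt)$ is conjugate to $\gamma_Z(0) = e$, which is the desired conclusion.

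There is no real obstacle here: the work has already been done in Proposition 4.1, and the corollary is just the observation that the conjugate value set $\{2m\pi/\lambda : m \in \bz^+,\ i\lambda \text{ eigenvalue of } j(Z)\}$ is closed under multiplication by positive integers. I would keep the write-up to two or three lines and not belabor it.
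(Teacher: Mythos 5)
Your proof is correct and matches the paper's own treatment: the paper states this corollary explicitly as ``an immediate consequence'' of Proposition 4.1, relying on exactly the observation you make that if $t = 2n\pi/\lambda$ then $mt = 2(mn)\pi/\lambda$ retains the required form. (The paper also restates the same fact after Corollary 4.5 as $c(Z,t)\subseteq c(Z,kt)$, which is the same point in different notation.)
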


\noindent  We now begin the proof of the Proposition.  Let $\{i \lambda_{1}, - i \lambda_{1}, i \lambda_{2}, - i \lambda_{2} , ... , i \lambda_{M}, - i \lambda_{M} \}$ be the nonzero eigenvalues of j(Z), where $\lambda_{i} > 0$ for $1 \leq i \leq M$.  For $1 \leq j \leq M$ let $\xi_{j}(t) = X_{j}(t) - V_{j}(t)$ and $\eta_{j}(t) = Y_{j}(t) + \frac{1}{\lambda_{j}} W_{j}(t)$, where $X_{j}, Y_{j}, V_{j}, W_{j}$ are defined in Proposition 3.1.   Let $t =  \frac{2 m \pi}{\lambda}$ for some nonzero integer m and some positive number $\lambda$ such that $i \lambda$ is an eigenvalue of j(Z).  Then $\lambda = \lambda_{j}$ for some $1 \leq j \leq M$. It follows that  $\xi_{j}(t) = \eta_{j}(t) = 0$.   Hence $\gamma_{Z}(0)$ is conjugate to $\gamma_{Z}(t)$.  To prove the reverse assertion of the Proposition requires a bit more work.

\begin{lemma}  Let $\gamma$ denote $\gamma_{Z}$ and let $J_{0}(\gamma)$ denote the vector space of Jacobi vector fields on $\gamma$ with $\gamma(0) = 0$.  Then \newline $\fB = \{tA_{1}, ... , tA_{r}, tZ_{1}, ... , tZ_{s}, \xi_{1}, ... , \xi_{M}, \eta_{1}, ... , \eta_{M} \}$ is a basis for $J_{0}(\gamma)$.
\end{lemma}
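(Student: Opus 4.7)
The plan is to verify the three basic things one needs for a basis of $J_0(\gamma)$: that every element of $\mathfrak{B}$ lies in $J_0(\gamma)$, that $\#\mathfrak{B}$ matches $\dim J_0(\gamma)$, and that the listed fields are linearly independent.

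First I would check membership in $J_0(\gamma)$. Each $tA_i$ and $tZ_j$ is already exhibited in Proposition 3.1 as a Jacobi field and clearly vanishes at $t=0$. Since $\xi_j = X_j - V_j$ and $\eta_j = Y_j + \frac{1}{\lambda_j}W_j$ are linear combinations of Jacobi fields from items 5) and 6) of Proposition 3.1, they are Jacobi. A short calculation at $t=0$ gives
\[
\xi_j(0) = B_j(\gamma(0)) + 0 - B_j(\gamma(0)) = 0, \quad
\eta_j(0) = 0 - \tfrac{1}{\lambda_j}\,j(Z)B_j(\gamma(0)) + \tfrac{1}{\lambda_j}\,j(Z)B_j(\gamma(0)) = 0,
\]
so both families lie in $J_0(\gamma)$.

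Next I would handle dimension and linear independence simultaneously by computing the initial covariant derivatives $Y'(0)$. From the derivative formulas in Proposition 3.1 one gets $(tA_i)'(0) = A_i$, $(tZ_j)'(0) = Z_j$, and
\[
\xi_j'(0) = X_j'(0) - V_j'(0) = -\tfrac{1}{2}\lambda_j Y_j(0) + \tfrac{1}{2}W_j(0) = j(Z)B_j,
\]
\[
\eta_j'(0) = Y_j'(0) + \tfrac{1}{\lambda_j}W_j'(0) = \tfrac{1}{2}\lambda_j X_j(0) + \tfrac{1}{2}\lambda_j V_j(0) = \lambda_j B_j.
\]
Since the vectors $\{A_1,\dots,A_r,Z_1,\dots,Z_s,j(Z)B_1,\dots,j(Z)B_M,\lambda_1 B_1,\dots,\lambda_M B_M\}$ are just the basis $\mathfrak{B}$ of $\mathfrak{N}$ (the $\lambda_j$ being positive and hence nonzero), they are linearly independent. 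Because any Jacobi field with $Y(0)=0$ is uniquely determined by $Y'(0)$, linear independence of the $Y'(0)$'s forces linear independence of the corresponding Jacobi fields. Moreover, the map $Y \mapsto Y'(0)$ is a linear isomorphism $J_0(\gamma) \to T_e N \cong \mathfrak{N}$, so $\dim J_0(\gamma) = \dim \mathfrak{N} = r + s + 2M = \#\mathfrak{B}$, and the set spans.

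There is no real obstacle here; everything is a direct computation. The only thing to be slightly careful about is the sign/scalar bookkeeping in the derivatives of $\xi_j$ and $\eta_j$, where the point is precisely that the combinations were chosen so as to annihilate the fields at $t=0$ while producing independent initial derivatives $j(Z)B_j$ and $\lambda_j B_j$.
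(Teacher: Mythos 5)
Your proof is correct, and the computations of $\xi_j'(0) = j(Z)B_j$ and $\eta_j'(0) = \lambda_j B_j$ check out against the derivative formulas in Proposition 3.1. However, you take a genuinely different route from the paper. After noting $|\mathfrak{B}| = \dim J_0(\gamma) = \dim\mathfrak{N}$, you establish the basis property by proving \emph{linear independence}, via the standard observation that $Y \mapsto Y'(0)$ is an isomorphism $J_0(\gamma) \to T_e N$ and that the resulting initial derivatives $\{A_i, Z_\alpha, j(Z)B_j, \lambda_j B_j\}$ form a basis of $\mathfrak{N}$. The paper instead proves \emph{spanning}: it takes an arbitrary $Y \in J_0(\gamma)$, imposes $Y(0)=0$ on the coefficient functions from Proposition 2.1, and rewrites $Y$ explicitly as $\sum_i x_i(tA_i) + \sum_\alpha d_\alpha(tZ_\alpha) + \sum_j \alpha_j \xi_j + \sum_j \beta_j \eta_j$. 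Both are legitimate, and yours is arguably cleaner as a self-contained basis argument. What the paper's route buys is the explicit decomposition itself, which is immediately reused in the remainder of the proof of Proposition 4.1 (the argument that $t_0>0$ with $Y(t_0)=0$ forces the $tA_i$ and $tZ_\alpha$ components to vanish and then extracts $t_0 = 2m_j\pi/\lambda_j$ from the $b_j, c_j$ conditions). If you use your linear-independence route, you would still need to produce that representation before proceeding with Proposition 4.1, so in context the paper's choice is the more economical one.
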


\begin{proof}  By inspection $\fB \subset J_{0}(\gamma)$.  Note that $|\fB| = r + s + 2M = dim~\fN = dim~J_{0}(\gamma)$.  To prove the linear independence of $\fB$ it suffices to show that $\fB$ spans $J_{0}(\gamma)$.
\newline

\noindent Let $Y(t) \in J_{0}(\gamma)$.  Write 

\hspace{.5in}  $(*) Y(t) =  \sum_{i=1}^{r} a_{i}(t) A_{i} (\gamma(t)) + \sum_{j=1}^{M} b_{j}(t) B_{j}(\gamma(t)) +   \newline \sum_{j=1}^{M} c_{j}(t) (j(Z) (B_{j})(\gamma(t)))  + \sum_{\alpha = 1}^{s} d_{\alpha}(t) Z_{\alpha}(\gamma(t))$ 

\noindent where the functions $a_{i}, b_{j}, c_{j}$ and $d_{\alpha}$ satisfy the conditions of Proposition 2.1.  Since $Y(0) = 0$ the functions $a_{i},b_{j}, c_{j}$ and $d_{\alpha}$ all vanish at $t = 0$. From Proposition 2.1 it follows that there exist real numbers $x_{1}, ... , x_{r}, d_{1}, ... , d_{p}, \alpha_{1}, ... , \alpha_{M}, \beta_{1}, ... , \beta_{M}$ such that 

\hspace{1in}  $a_{i}(t) = x_{i}t$, \hspace{.1in} $1 \leq i \leq r$.  

\hspace{1in}  $d_{\alpha}(t) = d_{\alpha}t$, \hspace{.1in} $1 \leq\alpha \leq s$. 

\hspace{1in}  $b_{j}(t) = \alpha_{j}~cos(\lambda_{j}t) + \beta_{j}~sin(\lambda_{j} t) - \alpha_{j}$ \hspace{.1in} $1 \leq j \leq M$. 

\hspace{1in}  $c_{j}(t) = - \frac{\beta_{j}}{\lambda_{j}}~cos(\lambda_{j}t) +  \frac{\alpha_{j}}{\lambda_{j}}~sin(\lambda_{j} t) + \frac{\beta_{j}}{\lambda_{j}}$ \hspace{.1in} $1 \leq j \leq M$.

\noindent  Plugging this  information into $(^{*})$ yields

\noindent $ (**)~Y(t) = \sum_{i=1}^{r} x_{i} (tA_{i})(\gamma(t)) + \sum_{\alpha=1}^{s} d_{\alpha} (t Z_{\alpha})(\gamma(t)) + \sum_{j=1}^{M} \alpha_{j} \xi_{j}(t) + \sum_{j=1}^{M} \beta_{j} \eta_{j}(t)$.  It follows that $\fB = \{tA_{1}, ... , tA_{r}, tZ_{1}, ... , tZ_{s}, \xi_{1}, ... , \xi_{M}, \eta_{1}, ... , \eta_{M} \}$  spans $J_{0}(\gamma)$.  Hence $\fB$ is a basis for $J_{0}(\gamma)$.
\newline

\noindent  We now complete the proof of the Proposition.  Let Y(t) be a Jacobi vector field on $\gamma(t)$, not identically zero, such that $Y(0) = 0$ and $Y(t_{0}) = 0$ for some positive number $t_{0}$. The condition $Y(t_{0}) = 0$, where $t_{0} > 0$ implies that the vector fields $tA_{i}$ and $tZ_{\alpha}$ have zero components in the expression for $Y(t)$ in $(**)$.  Hence  $Y(t) = \sum_{j=1}^{M} \alpha_{j} \xi_{j}(t) + \sum_{j=1}^{M} \beta_{j} \eta_{j}(t)$.  The conditions $b_{j}(t_{0}) = c_{j}(t_{0})  = 0$ for $1 \leq j \leq M$ imply that  $0 = (\alpha_{j}^{2} + \beta_{j}^{2})~sin(\lambda_{j} t)$ and $(\alpha_{j}^{2} + \beta_{j}^{2}) = (\alpha_{j}^{2} + \beta_{j}^{2})~cos~(\lambda_{j} t)$ for $1 \leq j \leq M$. Hence if $\alpha_{j}^{2} + \beta_{j}^{2} > 0$, then $t_{0} = \frac{2 m_{j} \pi}{\lambda_{j}}$ for some $1 \leq j \leq M$ and some nonzero integer $m_{j}$.  Since $Y(t) = \sum_{j=1}^{M} \alpha_{j} \xi_{j}(t) + \sum_{j=1}^{M} \beta_{j} \eta_{j}(t)$ is not identically zero, it follows that $\alpha_{j}^{2} + \beta_{j}^{2} > 0$ for some j. 
\newline

\noindent  The argument above shows that if $\gamma_{Z}(0)$ and $\gamma_{Z}(t_{0})$ are conjugate for some $t_{0} > 0$, then $t_{0} = \frac{2 m_{j} \pi}{\lambda_{j}}$ for some $1 \leq j \leq M$ and some positive integer $m_{j}$.  This completes the proof of the Proposition.
\end{proof}

\noindent $\mathbf{Remark} $ From the proof of the previous result it follows that if  $\alpha_{i}^{2} + \beta_{i}^{2} > 0$ and  $\alpha_{j}^{2} + \beta_{j}^{2} > 0$ for distinct integers i,j, then $\frac{\lambda_{i}}{\lambda_{j}}$ is rational.

\begin{proposition}  Let  a nonzero element $Z \in [\fN , \fN]$ be given, and let $\gamma$ denote  $\gamma_{Z}$.  For $t_{0} > 0$ let $J_{t_{0}}(\gamma) = \{Y \in J_{0}(\gamma) : Y(t_{0}) = 0\}$.  Let $\{\lambda_{1}, ... , \lambda_{M} \}$  be those positive numbers such that $\{\pm i \lambda_{1} , ... , \pm i \lambda_{M}\}$  are the nonzero eigenvalues of j(Z).  Let $I = \{j : 1 \leq j \leq M~\rm{and}~ t_{0} = \frac{2 \pi m_{j}}{\lambda_{j}}~\rm{for~some~positive~integer}~m_{j} \}$. Then $J_{t_{0}}(\gamma) = \br$-span $ \{\xi_{j}, \eta_{j} : j \in I \}$.  In particular the conjugate point $\gamma(t_{0})$ has even multiplicity.
\end{proposition}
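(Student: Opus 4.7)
The plan is to upgrade the basis for $J_0(\gamma)$ given in the preceding Lemma to a description of the subspace cut out by the one further linear condition $Y(t_{0}) = 0$. Every $Y \in J_0(\gamma)$ has the form
$$Y(t) = \sum_{i=1}^r x_i\,(tA_i)(\gamma(t)) + \sum_{\alpha=1}^s d_\alpha\,(tZ_\alpha)(\gamma(t)) + \sum_{j=1}^M \alpha_j\,\xi_j(t) + \sum_{j=1}^M \beta_j\,\eta_j(t),$$
so I would evaluate at $t_0$, rewrite the result in the frame $\{A_i, Z_\alpha, B_j, j(Z)B_j\}$ at $\gamma(t_0)$, and use linear independence to read off the constraints on the coefficients $x_i, d_\alpha, \alpha_j, \beta_j$.

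Carrying this out, the formulas in Proposition~3.1 give
\begin{align*}
\xi_j(t) &= (\cos \lambda_j t - 1)\,B_j(\gamma(t)) + \tfrac{1}{\lambda_j}\sin(\lambda_j t)\,(j(Z)B_j)(\gamma(t)),\\
\eta_j(t) &= \sin(\lambda_j t)\,B_j(\gamma(t)) + \tfrac{1}{\lambda_j}(1 - \cos \lambda_j t)\,(j(Z)B_j)(\gamma(t)).
\end{align*}
Since $\fB = \{A_i, B_j, j(Z)B_j, Z_\alpha\}$ is a basis of $\fN$, the associated left-invariant vector fields remain linearly independent when evaluated at the single point $\gamma(t_0)$. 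Setting $Y(t_0) = 0$ therefore forces $x_i t_0 = 0$ and $d_\alpha t_0 = 0$, whence $x_i = d_\alpha = 0$ because $t_0 > 0$; and for each $j$ it yields the $2 \times 2$ linear system
\begin{align*}
\alpha_j(\cos \lambda_j t_0 - 1) + \beta_j \sin \lambda_j t_0 &= 0,\\
\alpha_j \sin \lambda_j t_0 + \beta_j(1 - \cos \lambda_j t_0) &= 0,
\end{align*}
whose determinant reduces to $-2(1 - \cos \lambda_j t_0)$.

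This determinant vanishes exactly when $\lambda_j t_0 \in 2\pi\bz^+$, that is, when $j \in I$, and in that case the two equations are trivially satisfied, leaving $\alpha_j$ and $\beta_j$ free. For $j \notin I$ the system forces $\alpha_j = \beta_j = 0$. Hence $J_{t_0}(\gamma) = \br$-span $\{\xi_j, \eta_j : j \in I\}$, of dimension $2|I|$, so the multiplicity of the conjugate point is even. The only nonroutine point is the linear-independence claim used to split $Y(t_0) = 0$ coordinatewise; this is built into the construction of $\fB$ in Section~2, where $j(Z)B_j$ lies in the same $R_Z$-eigenspace as $B_j$ (since $R_Z$ commutes with $j(Z)$) and is orthogonal to it (since $j(Z)$ is skew-symmetric), so that the whole family $\fB$ is indeed a basis of $\fN$.
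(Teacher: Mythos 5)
Your proof is correct and takes essentially the same route as the paper: express $Y\in J_{0}(\gamma)$ in the basis $\{tA_i, tZ_\alpha, \xi_j, \eta_j\}$ from the preceding Lemma, impose $Y(t_0)=0$, and use linear independence of the frame at $\gamma(t_0)$ to split the condition coordinatewise. The only cosmetic difference is that you package the two equations for each $j$ as a $2\times 2$ system and compute the determinant $-2(1-\cos\lambda_j t_0)$, whereas the paper reaches the same dichotomy ($j\in I$ gives both equations trivially, $j\notin I$ forces $\alpha_j=\beta_j=0$) by manipulating $b_j(t_0)=c_j(t_0)=0$ inside its earlier proof of Proposition 4.1 and then simply citing that argument; the content is identical.
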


\begin{proof}  If $j \in I$, then $\xi_{j}(t_{0}) = \eta_{j}(t_{0}) = 0$.  Hence $J_{t_{0}}(\gamma)  \supset \br~span - \{\xi_{j}, \eta_{j} : j \in I \}$.  Conversely suppose $Y(t) \in J_{t_{0}}(\gamma)$.  By the proof of Proposition 4.1 there exist real numbers $\alpha_{k} , \beta_{k} : 1 \leq k \leq M$ such that $Y(t) = \sum_{k=1}^{M} \alpha_{k}~ \xi_{k}(t) +   \sum_{k=1}^{M} \beta_{k}~ \eta_{k}(t)$.  Let $I_{1} = \{j : 1 \leq j \leq M~\rm{and}~ \alpha_{j} \neq 0\}$. Let $I_{2} = \{k : 1 \leq k \leq M~\rm{and}~ \beta_{k} \neq 0\}$.  Clearly, $Y(t) = \sum_{j \in~ I_{1}} \alpha_{j}~ \xi_{j}(t) + \sum_{k \in~ I_{2}} \beta_{k}~ \eta_{k}(t)$.  The proof of Proposition 4.1 shows that if $j \in I_{1}$ or $j \in I_{2}$, then $t_{0} = \frac{2 \pi m_{j}}{\lambda_{j}}$ for some positive integer $m_{j}$.   Hence $I_{1} \cup I_{2} \subset I$.  It follows that $Y(t) \in  \br~span - \{\xi_{j}, \eta_{j} : j \in I \}$, which completes the proof.
\end{proof}

\begin{corollary}  For $Z \in [\fN , \fN], Z \neq 0$ and $t > 0$ define $c(Z,t) = \{\mu \in \frac{2 \pi i}{t}\bz^{+} : \mu~\rm{is~an~eigenvalue~of }~j(Z) \}$.  For $\mu \in c(Z,t)$ let $m(\mu)$ denote the multiplicity of $\mu$ as an eigenvalue of j(Z). Then 

	1) $t > 0$ is a conjugate value of $\gamma_{Z} \Leftrightarrow c(Z,t)$ is nonempty.
	
	2)  If m(t) is the mutliplicity of $t > 0$ as a conjugate value, then $m(t) = \newline 2~\sum_{\mu \in c(Z,t)} m(\mu)$.  In particular m(t) is even.
\end{corollary}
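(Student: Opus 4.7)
The plan is to derive the Corollary directly from Propositions 4.1 and 4.4, essentially by translating their statements from the language of the positive numbers $\lambda_j$ into the language of the purely imaginary eigenvalues $i\lambda_j$ of $j(Z)$.

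For part (1), I would unpack the definition of $c(Z,t)$. The set $c(Z,t)$ is nonempty iff there exists a positive integer $k$ and an eigenvalue $\mu$ of $j(Z)$ with $\mu = 2\pi i k/t$. Since $j(Z)$ is skew-symmetric its nonzero eigenvalues form $\pm i\lambda_1,\dots,\pm i\lambda_M$ with $\lambda_j>0$, so this is equivalent to $2\pi k/t = \lambda_j$ for some $j$ and some positive integer $k$, i.e.\ $t = 2\pi k/\lambda_j$. This is exactly the condition from Proposition 4.1 for $\gamma_Z(t)$ to be conjugate to $\gamma_Z(0)$, which proves (1).

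For part (2), recall that the multiplicity $m(t)$ is by definition $\dim J_t(\gamma)$. Proposition 4.4 identifies this space with $\br$-span $\{\xi_j,\eta_j : j \in I\}$, where $I = \{j : 1\leq j \leq M \text{ and } t = 2\pi m_j/\lambda_j \text{ for some } m_j \in \bz^+\}$. Since $\{\xi_1,\dots,\xi_M,\eta_1,\dots,\eta_M\}$ is part of the basis $\fB$ of $J_0(\gamma)$ produced in Lemma 4.3, its subfamily indexed by $I$ is linearly independent, so $m(t) = 2|I|$.

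The remaining step is the bookkeeping that identifies $|I|$ with $\sum_{\mu \in c(Z,t)} m(\mu)$. For each $j \in I$ we have $i\lambda_j = 2\pi i m_j/t$, so $i\lambda_j \in c(Z,t)$, and conversely every $\mu \in c(Z,t)$ arises in this way. Grouping the indices $j \in I$ by the common value $i\lambda_j = \mu$, the number of $j$'s yielding a given $\mu$ is exactly the multiplicity $m(\mu)$ of $\mu$ as an eigenvalue of $j(Z)$, since the list $\{i\lambda_1,\dots,i\lambda_M\}$ is chosen with repetitions according to multiplicities in Section 2. Summing gives $|I| = \sum_{\mu\in c(Z,t)} m(\mu)$, and hence $m(t) = 2\sum_{\mu\in c(Z,t)} m(\mu)$, which is in particular even. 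The only obstacle I expect is being careful about this last identification, distinguishing the index set $\{1,\dots,M\}$ (counted with multiplicity) from the unordered spectrum on which $m(\mu)$ is defined; once that dictionary is in place the corollary is essentially a restatement of Propositions 4.1 and 4.4.
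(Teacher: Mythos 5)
Your argument is correct and follows essentially the same route as the paper: part (1) by translating the definition of $c(Z,t)$ into the condition $t = 2\pi k/\lambda_j$ of Proposition~4.1, and part (2) by identifying $m(t) = 2|I|$ via Proposition~4.4 and Lemma~4.3, then matching $|I|$ with $\sum_{\mu \in c(Z,t)} m(\mu)$ by grouping indices $j$ with a common $i\lambda_j = \mu$. The paper's proof is terser but carries out exactly this dictionary between the multiset $\{i\lambda_1,\dots,i\lambda_M\}$ and the spectral multiplicities of $j(Z)$.
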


\begin{proof}  Assertion 1) follows directly from Proposition 4.1 by a straightforward argument.  We prove 2).  Straightforward arguments show that $\mu \in c(Z,t) \Leftrightarrow \mu = i \lambda_{j}$ for some $j \in I$, where I is defined in the statement of Proposition 4.4.  Moreover, by inspection, for $\mu \in c(Z,t), m(\mu)$ is the number of j in I such that $\mu = i \lambda_{j}$.  Hence $\sum_{\mu \in c(Z,t)} m(\mu) = |I|$, and assertion 2) now follows from Proposition 4.4. 
\end{proof}

\noindent $\mathbf{Remark}$  Let $t > 0$ be a conjugate value.  By inspection $c(Z,t) \subseteq c(Z, kt)$ for all positive integers k.  Hence $m(t) \leq m(tk)$ and kt is a conjugate value for all positive integers k.
\newline

\section{Comparison of the conjugate locus of $\gamma_{Z}$ and the eigenvalues of j(Z)}

\noindent  Let $0 \neq Z \in [\fN ,\fN]$ be given, and let $\gamma_{Z}(t) = exp(tZ)$ be the corresponding central geodesic of N.  If $t_{0}$ is a conjugate value of $\gamma_{Z}$, then $mt_{0}$ is a conjugate value of $\gamma_{Z}$ for every positive integer m by Corollary 4.2 or Corollary 4.5.  We say a conjugate value $t_{0}$ is $\mathit{primitive}$ if $t_{0}$ cannot be written as $mt_{1}$ for an integer $m \geq 2$ and a conjugate value $t_{1}$.
\newline

\noindent  Let $0 \neq Z \in [\fN , \fN]$ be given. It follows from Proposition 4.1 and Corollary 4.5 that the eigenvalues and multiplicities  of j(Z) determine the conjugate locus and multiplicity of the geodesic $\gamma_{Z}$.  However, it is not clear if the conjugate locus and multiplicities of $\gamma_{Z}$ always determine the eigenvalues and multiplicities  of j(Z).  We can show that this happens if the number of primitive conjugate values of $\gamma_{Z}$ is maximal.  Roughly speaking, this maximality condition is satisfied  for a generic element Z in a generic metric 2-step nilpotent Lie algebra $\{\fN, \langle , \rangle \}$.  See Proposition 8.2 or Corollary 8.3 for a precise statement.
\newline

\begin{proposition}  Let $q \geq 2$ be an integer, and let $P = [\frac{1}{2} q]$. Let $\{\fN, \langle , \rangle \}$ be a metric Lie algebra of type (p,q), and let N be the simply connected, 2-step nilpotent Lie group with left invariant metric whose metric Lie algebra is $\{\fN, \langle , \rangle \}$.  Let $Z \in [\fN , \fN]$ be nonzero.  Then

	1)  The geodesic $\gamma_{Z}$ of N has at most P primitive conjugate values.
	
	2)  The geodesic $\gamma_{Z}$ of N has P primitive conjugate values $\Leftrightarrow$ the eigenvalues of j(Z) are all distinct, and the ratio of any two distinct nonzero eigenvalues is not an integer of absolute value $\geq 2$.  
	
	3)  If $\{t_{1}, ... , t_{P} \}$ are distinct primitive conjugate values, then $t_{k} = \frac{2 \pi}{\lambda_{k}}$ for $1 \leq k \leq P$, where $\lambda_{k}$ is a positive number such that $i \lambda_{k}$ is an eigenvalue of j(Z).  The nonzero eigenvalues of j(Z) are $\{\pm i \lambda_{1} , ... , \pm i \lambda_{P} \}$.
\end{proposition}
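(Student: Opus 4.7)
The plan is to reduce all three assertions to the explicit description of conjugate values supplied by Corollary 4.5: $t>0$ is a conjugate value of $\gamma_{Z}$ precisely when $t = \tfrac{2\pi m}{\lambda_{j}}$ for some positive integer $m$ and some $j \in \{1,\ldots,M\}$, where $\{\pm i\lambda_{1},\ldots,\pm i\lambda_{M}\}$ is the multiset of nonzero eigenvalues of $j(Z)$. Since $j(Z)$ is skew-symmetric on the $q$-dimensional space $\fV$, we have $\dim \ker j(Z) = q - 2M$, whence $M \leq P = [\tfrac{1}{2}q]$. Throughout I will write $\mu_{j} := \tfrac{2\pi}{\lambda_{j}}$.

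For part 1, any conjugate value of the form $m\mu_{j}$ with $m \geq 2$ is manifestly non-primitive, so the primitive conjugate values all lie in $\{\mu_{1},\ldots,\mu_{M}\}$, a set of cardinality at most $M \leq P$. This proves part 1 and at the same time delivers the structure needed for part 3: if there are $P$ distinct primitive conjugate values $t_{1},\ldots,t_{P}$, each equals some $\mu_{j(k)}$, and distinctness of the $t_{k}$'s forces the $\lambda_{j(k)}$'s to be pairwise distinct. Thus $M \geq P$, so $M = P$, and after relabeling $t_{k} = \mu_{k}$ with $\lambda_{1},\ldots,\lambda_{P}$ distinct. Because $\dim \ker j(Z) = q - 2P \in \{0,1\}$, no zero eigenvalue can be repeated, so the nonzero eigenvalues of $j(Z)$ are exactly $\{\pm i\lambda_{1},\ldots,\pm i\lambda_{P}\}$, each with multiplicity one, giving part 3.

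For part 2, the forward direction comes for free from the preceding analysis: the $\lambda_{k}$'s are distinct and the kernel has dimension at most $1$, hence all eigenvalues of $j(Z)$ are distinct, while primitivity of each $\mu_{k}$ unfolds to the statement $\mu_{k} \neq m \mu_{\ell}$ for all integers $m \geq 2$ and all $\ell$, equivalently $\lambda_{\ell}/\lambda_{k} \neq m$ for any such $m$, which is exactly the ratio hypothesis. For the converse, distinctness of the eigenvalues forces $\dim \ker j(Z) \leq 1$ and hence $M = P$ with all $\lambda_{k}$'s distinct, while the ratio hypothesis guarantees that each $\mu_{k}$ is primitive and the $\mu_{k}$'s are pairwise distinct, producing $P$ primitive conjugate values. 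The main obstacle is not conceptual but bookkeeping: one must keep straight the three equivalent reformulations of primitivity (in terms of $\mu_{k}$, in terms of ratios of $\lambda_{k}$'s, and in terms of ratios of eigenvalues of $j(Z)$) and be careful about the parity of $q$ when reading off $M = P$ from a kernel-dimension constraint, but nothing beyond careful accounting is required.
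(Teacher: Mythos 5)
Your argument is correct and follows essentially the same route as the paper: both proofs reduce everything to the fact (Lemma 5.3 in the paper) that a primitive conjugate value must equal $2\pi/\lambda$ with $i\lambda$ an eigenvalue of $j(Z)$, and both bound the number of such values by $[\frac{1}{2}q]$. The one cosmetic difference is that where the paper splits explicitly into the cases $q$ even and $q$ odd to conclude that $M=P$ and that the eigenvalues are exactly $\{0\text{ or }\emptyset, \pm i\lambda_{1},\ldots,\pm i\lambda_{P}\}$, you package the same information more compactly via $\dim\ker j(Z) = q - 2M \in \{0,1\}$.
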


\noindent $\mathbf{Remark}$  With regard to assertion 2) above note that if $i \lambda$ is an eigenvalue of j(Z), then $- i \lambda$ is also an eigenvalue of j(Z), and the ratio of these two distinct eigenvalues is $-1$.  If q is odd, then in 3) zero is an eigenvalue of j(Z) of multiplicity one.
\newline

\noindent As an immediate corollary of assertion 3) of this result we obtain

\begin{corollary} Let $q \geq 2$ be an integer, and let $P = [\frac{1}{2} q]$.  Let $\{\fN, \langle , \rangle \}$ be a metric Lie algebra of type (p,q), and let N be the simply connected, 2-step nilpotent Lie group with left invariant metric whose metric Lie algebra is $\{\fN, \langle , \rangle \}$. Let $Z \in [\fN , \fN]$ be nonzero.  Let the geodesic $\gamma_{Z}$ of N have P distinct primitive conjugate values.  Then the eigenvalues of j(Z) are all distinct and are determined by the conjugate locus and multiplicity of $\gamma_{Z}$.
\end{corollary}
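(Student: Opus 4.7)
The plan is to apply assertion 3) of Proposition 5.1 essentially verbatim, supplemented by a short skew-symmetry count to rule out hidden multiplicities.

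First, I would invoke Proposition 5.1.3) directly. Let $t_1,\ldots,t_P$ be the $P$ distinct primitive conjugate values of $\gamma_Z$. The proposition says $t_k = 2\pi/\lambda_k$ with $i\lambda_k$ an eigenvalue of $j(Z)$ and $\lambda_k > 0$, and the full list of nonzero eigenvalues of $j(Z)$ is $\{\pm i\lambda_1, \ldots, \pm i\lambda_P\}$. Because $t_1,\ldots,t_P$ are distinct and positive, the $\lambda_k$ are distinct and positive, so the $2P$ numbers $\pm i\lambda_k$ are pairwise distinct.

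Next, I would argue that each of these eigenvalues actually has multiplicity one. Since $j(Z)$ is a real skew-symmetric $q \times q$ matrix, its nonzero eigenvalues appear in complex conjugate pairs $\pm i\mu$, and its rank equals twice the number of such pairs counted with multiplicity. We already exhibit $2P$ distinct nonzero eigenvalues, so the rank of $j(Z)$ is at least $2P$. But the rank of a real skew-symmetric matrix is even and at most $q$: when $q = 2P$ the rank is exactly $2P$ and $0$ is not an eigenvalue; when $q = 2P+1$ the rank is still at most $2P$, so it equals $2P$ and $0$ is an eigenvalue of multiplicity exactly $q - 2P = 1$. In either case every eigenvalue of $j(Z)$ has multiplicity one, which is the first assertion of the corollary.

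Finally, I would note that the recipe $\lambda_k = 2\pi/t_k$ together with the parity of $q$ (which is fixed by the data of the type $(p,q)$ of $\fN$) recovers the complete multiset of eigenvalues of $j(Z)$ from the primitive conjugate values alone, and the primitive conjugate values are themselves determined by the conjugate locus of $\gamma_Z$. Since there is no real computational step beyond Proposition 5.1.3), no step is a serious obstacle; the only care required is bookkeeping the zero eigenvalue in the odd case, which is handled uniformly by the skew-symmetric rank argument above.
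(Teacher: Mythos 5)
Your proof is correct and follows essentially the same route as the paper, which presents the corollary as an immediate consequence of Proposition 5.1, with the distinctness argument already carried out inside the proof of 5.1(2)--(3). Your rank/dimension count for skew-symmetric $j(Z)$ spells out more explicitly the paper's terse step ``By inspection these are all of the eigenvalues of $j(Z)$,'' but it is the same idea.
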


\noindent  We begin the proof of Proposition 5.1.  To prove 1) the next result is useful.

\begin{lemma}  If $t > 0$ is a primitive conjugate value of $\gamma_{Z}$, then $t = \frac{2 \pi}{\lambda}$, where $\lambda > 0$ and $i \lambda$ is an eigenvalue of j(Z).
\end{lemma}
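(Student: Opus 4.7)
The plan is to obtain this lemma as a direct corollary of the characterization of conjugate values given in Proposition 4.1, together with the definition of primitivity. First, since $t>0$ is in particular a conjugate value of $\gamma_Z$, Proposition 4.1 supplies a positive integer $m$ and a positive real number $\lambda$ with $i\lambda$ an eigenvalue of $j(Z)$ such that $t = \frac{2m\pi}{\lambda}$. If $m=1$, we are done, so I would suppose for contradiction that $m \geq 2$.

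Next I would set $t_1 := \frac{2\pi}{\lambda}$. Since $i\lambda$ is already an eigenvalue of $j(Z)$, a second application of Proposition 4.1 (now with the integer $1$ in place of $m$ and the same $\lambda$) shows that $t_1 > 0$ is itself a conjugate value of $\gamma_Z$. By construction $t = m\, t_1$ with $m \geq 2$, which contradicts the assumption that $t$ is primitive. Hence $m=1$ and $t = \frac{2\pi}{\lambda}$ with $i\lambda$ an eigenvalue of $j(Z)$, as required.

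The argument is essentially a one-liner once Proposition 4.1 is invoked, so I do not expect any real obstacle. The only point worth flagging is a notational one: primitivity of $t$ is defined in terms of \emph{existence} of a factorization $t = m t_1$ with $t_1$ a conjugate value, so to get the contradiction one must exhibit such a $t_1$ explicitly, and the natural candidate $t_1 = 2\pi/\lambda$ is seen to work immediately because the same eigenvalue $i\lambda$ certifies it as a conjugate value via Proposition 4.1.
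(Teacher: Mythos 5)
Your proposal is correct and follows essentially the same approach as the paper: invoke Proposition 4.1 to write $t = \frac{2m\pi}{\lambda}$, note that $t_0 = \frac{2\pi}{\lambda}$ is itself a conjugate value by the same proposition, and conclude $m=1$ from primitivity. The only cosmetic difference is that you phrase it as a contradiction whereas the paper argues directly, but the content is identical.
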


\begin{proof}   Let $t > 0$ be a primitive conjugate value of $\gamma_{Z}$.  By Proposition 4.1 we may write $t = \frac{2 \pi m}{\lambda}$, where m is a positive integer and $i \lambda$ is a nonzero eigenvalue of j(Z).  If $t_{0} = \frac{2 \pi}{\lambda}$, then $t_{0}$ is a conjugate value of $\gamma_{Z}$ by Proposition 4.1.  Hence $t = m t_{0}$, which proves that $m = 1$ since t is a primitive conjugate value.
\end{proof}

\noindent  We now prove 1) of Proposition 5.1.  Suppose that $\{t_{1}, ... , t_{m} \}$ are distinct primitive conjugate values of $\gamma_{Z}$ for some integer $m > P$.  By Lemma 5.3 there exist distinct positive numbers $\{\lambda_{1}, ... , \lambda_{m} \}$ such that $t_{k} = \frac{2 \pi}{\lambda_{k}}$ and $i \lambda_{k}$ is an eigenvalue of j(Z) for $1 \leq k \leq m$.  It follows that j(Z) has $2m \geq 2P + 2 > q$ distinct eigenvalues, which is impossible.
\newline

\noindent  We prove 2).  Suppose first that $\gamma_{Z}$ has P distinct primitive conjugate values $\{t_{1}, ... , t_{P} \}$.  We show that the eigenvalues of j(Z) are all distinct.  By Lemma 5.3 we may write $t_{k} = \frac{2 \pi}{\lambda_{k}}$, where $\lambda_{k} > 0$ and $i \lambda_{k}$ is an eigenvalue of j(Z) for $1 \leq k \leq N$.  If q is even, then $q = 2P$ and the eigenvalues $\{\pm i \lambda_{1}, ... , \pm i \lambda_{P} \}$ are all distinct.  By inspection these are all of the eigenvalues of j(Z).  If q is odd, then $q = 2P +1$ and zero is an eigenvalue of j(Z).  The eigenvalues $\{\pm \lambda_{1}, É , \pm \lambda_{P} \}$ are distinct and nonzero, which proves in this case that all eigenvalues of j(Z) are distinct.
\newline

\noindent We show next that the ratio of nonzero eigenvalues of j(Z) cannot be an integer of absolute value $\geq 2$.  Suppose that $\frac{i \lambda_{j}}{i \lambda_{k}} = \frac{\lambda_{j}}{\lambda_{k}} = m$, where $m \in \bz$ and $|m| \geq 2$.  Then $\frac{t_{k}}{t_{j}} = \frac{\lambda_{j}}{\lambda_{k}} = m$, where $|m| \geq 2$, which contradicts the fact that $t_{k}$ is a primitive conjugate value of $\gamma_{Z}$.
\newline

\noindent  Conversely, suppose that the eigenvalues of j(Z) are all distinct and that the ratio of any two nonzero eigenvalues is not an integer of absolute value $\geq 2$.  We consider first the case that q is even.  Then $q = 2P$ and all eigenvalues of j(Z) are nonzero since the multiplicity of zero as an eigenvalue in this case is always even.  Hence j(Z) has eigenvalues $\{\pm i \lambda_{1}, ... , \pm i \lambda_{P} \}$, where $\{\lambda_{1}, ... , \lambda_{P} \}$ are distinct positive numbers.  Let $t_{k} = \frac{2 \pi}{\lambda_{k}}$ for $1 \leq k \leq P$.  The numbers $\{t_{1}, ... , t_{P} \}$ are distinct conjugate values of $\gamma_{Z}$ by Proposition 4.1.  We show that they are all primitive.  Suppose that $t_{k} = mt$ for some $1 \leq k \leq P$, some positive integer m and some conjugate value t.  By Proposition 4.1 we can write $ t= \frac{2 \pi r}{\lambda_{j}}$ for some $r \in \bz^{+}$ and some $\lambda_{j} , 1 \leq j \leq P$.  Hence $\frac{2 \pi}{\lambda_{k}} = rt = \frac{2 \pi m r}{\lambda_{j}}$, which implies that $\lambda_{j} = m r \lambda_{k}$.  By the hypothesis on the ratios of nonzero eigenvalues of j(Z) it follows that  $ m = r =1$.  Hence  $t_{k} = \frac{2 \pi}{\lambda_{k}}$ is primitive for $1 \leq k \leq P$.
\newline

\noindent  Finally, we consider the case that q is odd ; that is, $q = 2P + 1$.  Since the eigenvalues of j(Z) are assumed to be distinct there exist distinct positive numbers $\{\lambda_{1}, ... , \lambda_{P} \}$ such that the eigenvalues of j(Z) are $\{0, \pm i \lambda_{1}, É , \pm i \lambda_{P} \}$.  The argument above also shows in this case that $t_{k} = \frac{2 \pi}{\lambda_{k}}$ is a primitive conjugate value for $1 \leq k \leq P$.  This completes the proof of 2).
\newline

\noindent 3).  The proof of this assertion is contained in the proof of 2).

\section{Topology of Grassmann manifolds}

\noindent  It will be useful in what follows to review briefly the canonical topology of the Grassmann manifold of p-planes in $\br^{n}$, where $1 \leq p \leq n-1$.  See [MS], pp. 55-59, for details.
\newline

\noindent  Let $V_{0}(p,n)$ denote the set of all orthonormal p-frames $\{f_{1}, ... , f_{p} \}$ in $\br^{n}$, and define a surjective map $q_{0} : V_{0}(p,n) \rightarrow G(p,n)$ by $q_{0}(\{f_{1}, ... , f_{p} \}) = \br$-span$\{f_{1}, ... , f_{p} \}$.  Note that $V_{0}(p,n)$ is a closed bounded subset of $(\br^{n})^{p} = \br^{n} \times ... \times \br^{n}$ (p times).  Hence $V_{0}$ is compact in the subspace topology of $(\br^{n})^{p}$.  If $\fT$ denotes the quotient topology on G(p,n) defined by $q_{0}$, then G(p,n) is compact in this topology.  The topology $\fT$ is Hausdorff.
\newline

\noindent  Let d denote the metric on $(\br^{n})^{p}$ given by $d(f,f')^{2} = \sum_{i=1}^{p} |f_{i} - f_{i}'|^{2}$, where $f = (f_{1}, ... , f_{p})$ and $f '= (f_{1}', ... , f_{p}')$.  The topology of $(\br^{n})^{p}$, and hence by restriction to $V_{0}(p,n)$, is induced by d.
\newline

\noindent Alternatively, let V(p,n) denote the set of all linearly independent sets $\{v_{1}, ... , v_{p} \}$ with p elements in $\br^{n}$.  Let V(p,n) have the subspace topology of $(\br^{n})^{p}$ and define a surjective map $q : V(p,n) \rightarrow G(p,n)$ by $q(\{v_{1}, ... , v_{p} \}) = \br$-span$\{v_{1}, ... , v_{p} \}$.  One may show that the quotient topology on G(p,n) defined by q equals $\fT$.
\newline

\begin{lemma} 

	1)  The map $q_{0} : V_{0}(p,n)  \rightarrow G(p,n)$ is an open map.

	2)  Let $W_{0} \in G(p,n)$ be given, and let H be the subgroup of $O(n,\br)$ that fixes $W_{0}$.  Then G(p,n) is homeomorphic to the coset space $O(n,\br) / H$ equipped with the quotient topology defined by the projection $p : O(n,\br) \rightarrow O(n,\br) / H$.
\end{lemma}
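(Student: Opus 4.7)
The plan is to handle the two assertions independently, using in each case only the universal property of the quotient topology together with elementary compactness considerations.

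For 1), recall that $\fT$ is by definition the quotient topology induced by $q_{0}$, so a subset $S \subseteq G(p,n)$ is open if and only if $q_{0}^{-1}(S)$ is open in $V_{0}(p,n)$. Hence, given an open set $U \subseteq V_{0}(p,n)$, in order to show $q_{0}(U)$ is open it is enough to show $q_{0}^{-1}(q_{0}(U))$ is open. The key observation is that two orthonormal $p$-frames span the same $p$-plane if and only if they are related by a change of basis in $O(p,\br)$; consequently $q_{0}^{-1}(q_{0}(U)) = \bigcup_{A \in O(p,\br)} A \cdot U$, where $O(p,\br)$ acts on $V_{0}(p,n)$ by change of orthonormal basis. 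Each $A \in O(p,\br)$ acts as a homeomorphism of $V_{0}(p,n)$, so each $A \cdot U$ is open, and hence so is their union.

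For 2), I would introduce the evaluation map $\pi : O(n,\br) \rightarrow G(p,n)$ defined by $\pi(g) = g(W_{0})$, where $O(n,\br)$ acts on $G(p,n)$ in the natural way. The action is transitive (any given orthonormal basis of one $p$-plane can be mapped to any given orthonormal basis of another by some element of $O(n,\br)$), so $\pi$ is surjective, its fibers are exactly the left cosets of $H = \pi^{-1}(W_{0})$, and $\pi$ therefore descends to a bijection $\bar{\pi} : O(n,\br)/H \rightarrow G(p,n)$. To verify that $\pi$ is continuous, fix an orthonormal basis $\{e_{1}, \dots , e_{p}\}$ of $W_{0}$; then $\pi(g) = q_{0}(ge_{1}, \dots , ge_{p})$ factors as the evidently continuous map $g \mapsto (ge_{1}, \dots , ge_{p})$ into $V_{0}(p,n)$ followed by $q_{0}$. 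Continuity of $\bar{\pi}$ then follows from the universal property of the quotient projection onto $O(n,\br)/H$.

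The main obstacle, mild as it is, is showing that $\bar{\pi}^{-1}$ is also continuous. Rather than trying to construct it by hand, I would invoke the standard fact that a continuous bijection from a compact space to a Hausdorff space is a homeomorphism. Since $O(n,\br)$ is compact, so is the quotient $O(n,\br)/H$; and $G(p,n)$ was already noted in this section to be Hausdorff. Thus $\bar{\pi}$ is automatically a homeomorphism, completing the proof.
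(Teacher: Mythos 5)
Your proof of part 2) is essentially the same as the paper's: both of you introduce the orbit map $g \mapsto g(W_0)$ from $O(n,\br)$ to $G(p,n)$, show it is continuous by factoring through $V_0(p,n)$ and $q_0$, pass to the induced bijection on $O(n,\br)/H$, and conclude via the compact-to-Hausdorff criterion.

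For part 1) your route is genuinely different, and in fact cleaner. You argue directly from the definition of the quotient topology, observing that $q_0^{-1}(q_0(U)) = \bigcup_{A \in O(p,\br)} U \cdot A$ because the fibers of $q_0$ are exactly the orbits of the right $O(p,\br)$-action on orthonormal $p$-frames (two orthonormal frames span the same $p$-plane precisely when they differ by an orthogonal change of basis). Since each $A$ acts by a homeomorphism, the saturation of an open set is open and $q_0$ is an open map. The paper instead derives 1) as a consequence of 2): it writes $\Psi_0 = q_0 \circ \pi_0$, shows $\Psi_0 = \rho_0 \circ p$ is open, and then tries to peel off $\pi_0$. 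But the paper asserts that $\pi_0 : O(n,\br) \to V_0(p,n)$, $\varphi \mapsto \varphi(f_0)$, is a bijection, and this is not true for $p < n$: the stabilizer of an orthonormal $p$-frame in $O(n,\br)$ is a copy of $O(n-p,\br)$, so $\pi_0$ is surjective but not injective. (The paper's argument can be repaired, since one only needs $\pi_0$ to be continuous and surjective to conclude $q_0(U) = \Psi_0(\pi_0^{-1}(U))$ is open, but as written it rests on a false claim.) Your self-contained argument sidesteps this issue entirely, which is a real advantage of the approach.
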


\begin{proof}  Fix $f_{0} = (f_{1}, .. , f_{p}) \in V_{0}(p,n)$ and define $\pi_{0} : O(n,\br) \rightarrow V_{0}(p,n)$ by $\pi_{0}(\varphi) = \varphi(f_{0})$.  The map $\pi_{0}$ is a continuous bijection and hence is a homeomorphism since $O(n,\br)$ is compact and $V_{0}(p,n)$ is Hausdorff.
\newline

\noindent Let $W_{0} = q_{0}(f_{0})$ and define a surjective map $\Psi_{0} : O(n,\br) \rightarrow G(p,n)$ by $\Psi(\varphi) = \varphi(W_{0})$.  The map $\Psi_{0}$ is continuous since $\Psi_{0} = q_{0} \circ \pi_{0}$ and $\pi_{0}$ is a homeomorphism.
\newline

\noindent If H is the subgroup of $O(n,\br)$ that fixes $W_{0}$, then $\Psi_{0}$ induces a bijection $\rho_{0} : O(n,\br) / H \rightarrow G(p,n)$ such that $\Psi_{0} = \rho_{0} \circ p$, where $p : O(n,\br) \rightarrow O(n,\br) / H$ is the projection.  If $O(n,\br) / H$ is given the quotient topology determined by p, then p is an open map.  Moreover, $\rho_{0}$ is continuous since $\Psi_{0}$ is continuous.  The map $\rho_{0} : O(n,\br) \rightarrow G(p,n)$ is a continuous bijection and hence is a homeomorphism since $O(n,\br) / H$ is compact and G(p,n) is Hausdorff.  This proves 2).
\newline

\noindent  It follows that $\Psi_{0} = \rho_{0} \circ p$ is an open map since p is an open map and $\rho_{0}$ is a homeomorphism. Finally, $q_{0} : V_{0}(p,n) \rightarrow G(p,n)$ is an open map since $\Psi_{0} = q_{0} \circ \pi_{0}$ and $\pi_{0}$ is a homeomorphism.  This proves 1).
\end{proof}

\noindent  The next result will be useful later.

\begin{lemma}  Let O be an open subset of $\br^{n}$, and let $W \in G(p,n)$ be a p-plane such that $W \cap O$ is nonempty.  Then there exists a neighborhood V of W in G(p,n) such that $O \cap W'$ is nonempty for every $W' \in V$.
\end{lemma}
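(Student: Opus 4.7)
The plan is to exploit the openness of $O$ together with the openness of the quotient map $q_0 : V_0(p,n) \to G(p,n)$ established in Lemma 6.1. First I would pick a witness $v_0 \in W \cap O$ and choose $\epsilon > 0$ small enough that the Euclidean ball $B(v_0,\epsilon)$ is contained in $O$. The strategy is then to perturb $v_0$ within any nearby $p$-plane $W'$ in a controlled way.

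To carry this out, I would fix an orthonormal basis $\{f_1,\ldots,f_p\}$ of $W$, viewed as a frame $f = (f_1,\ldots,f_p) \in V_0(p,n)$ with $q_0(f) = W$, and write $v_0 = \sum_{i=1}^{p} a_i f_i$ where $a_i = \langle v_0, f_i\rangle$. For any orthonormal frame $f' = (f_1',\ldots,f_p') \in V_0(p,n)$, the vector $v_0' := \sum_{i=1}^{p} a_i f_i'$ automatically lies in $W' := q_0(f')$. A routine Cauchy--Schwarz estimate using the metric $d$ defined in section 6 yields
$$|v_0 - v_0'| = \Bigl|\sum_{i=1}^{p} a_i(f_i - f_i')\Bigr| \leq \Bigl(\sum a_i^2\Bigr)^{1/2}\Bigl(\sum |f_i - f_i'|^2\Bigr)^{1/2} = |v_0|\,d(f,f').$$
Hence if $d(f,f') < \delta$ for a sufficiently small $\delta > 0$ (any $\delta < \epsilon/|v_0|$ when $v_0 \neq 0$, and any positive $\delta$ in the trivial case $v_0 = 0$), then $v_0' \in B(v_0,\epsilon) \subset O$, so $W' \cap O$ is nonempty.

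To conclude, let $U$ be the open $\delta$-ball around $f$ in $V_0(p,n)$. By Lemma 6.1(1) the image $V := q_0(U)$ is open in $G(p,n)$ and contains $W = q_0(f)$. Any $W' \in V$ is of the form $q_0(f')$ for some $f' \in U$, and the previous paragraph supplies the desired point $v_0' \in W' \cap O$. I do not anticipate any serious obstacle: the only subtlety is observing that linear combinations with fixed coefficients depend continuously on the underlying frame, so a nonempty intersection with an open set is stable under small frame perturbations; the openness of $q_0$ then transfers this stability from $V_0(p,n)$ down to $G(p,n)$.
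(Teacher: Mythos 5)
Your proposal is correct and follows essentially the same argument as the paper's proof: fix an orthonormal frame for $W$, express the witness $v_0 \in W \cap O$ in that frame, reconstruct a nearby vector with the same coefficients in a perturbed frame, and push the open $\epsilon$-ball in $V_0(p,n)$ down to $G(p,n)$ via the open map $q_0$. The only difference is that you make the required continuity explicit via a Cauchy--Schwarz estimate, whereas the paper simply asserts that a suitable $\epsilon$ exists; the substance is identical.
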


\begin{proof}  Let $\{f_{1}, ... , f_{p} \}$ be an orthonormal basis for W, and let $w = \sum_{i=1}^{p} a_{i} f_{i} \in O \cap W$ be given, where $a_{1}, ... , a_{p}$ are real numbers.  For $\epsilon > 0$ let $U_{\epsilon} = \{f' \in V_{0}(p,n) : d(f,f') < \epsilon \}$. Choose $\epsilon > 0$ such that if $f' = (f_{1}', ... , f_{p}' ) \in U_{\epsilon}$, then $w' = \sum_{i=1}^{p} a_{i} f_{i}' \in O$.  The set $U_{\epsilon}$ is open in $V_{0}(p,n)$, and the set $V_{\epsilon} = q_{0}(U_{\epsilon})$ is open in G(p,n) since $q_{0} : V_{0}(p,n) \rightarrow G(p,n)$ is an open map by Lemma 6.1. Note that $V_{\epsilon}$ contains W.  
\newline

\noindent  If $W' \in V_{\epsilon}$, then $W' = q_{0}(f')$, where $d(f,f') < \epsilon$.  It follows by the choice of $\epsilon$ that $Z' = \sum_{i=1}^{p} a_{i} f_{i}' \in W' \cap O$.
\end{proof}

\section{The space of isometry classes of type (p,q)}

\noindent  Let I(p,q) denote the space of isometry classes of metric, 2-step simply connected nilpotent Lie groups $\{N, \langle , \rangle \}$ of type (p,q) or equivalently, of metric, 2-step nilpotent Lie algebras $\{\fN , \langle , \rangle \}$ of type (p,q).  In this section and the next  we characterize I(p,q), and we show that there exists a dense open subset $U' $ in I(p,q) such that if the isometry class of  $\{N, \langle , \rangle \}$ lies in $U'$, then the primitivity condition of Proposition 5.1 is satisfied for all Z in an open subset  with full Lebesgue measure of $[\fN , \fN]$.
\newline

\noindent We first construct a standard form for a metric, 2-step nilpotent Lie algebra of type (p,q).  Let p,q be positive integers with $p \leq \frac{1}{2} q(q-1) = dim~\fs \fo(q,\br)$ and let W be an element of $G(p,\fs \fo(q,\br))$, the Grassmann manifold of p-dimensional subspaces of $\fs \fo(q,\br)$.  Let $\fN = \br^{q} \oplus W$, direct sum.  Let $\langle , \rangle_{0}$ be the positive definite inner product on $\fs \fo(q,\br)$ given by $\langle X,Y \rangle = trace~XY^{T} = - trace~XY$ for $X,Y \in \fs \fo(q,\br)$.  Let $\langle, \rangle$ be the inner product on $\fN$ such that $\langle , \rangle = \langle , \rangle_{0}$ on W, $\langle , \rangle$ is the canonical inner product $\cdot$ on $\br^{q}$ and $\br^{q}$, W are orthogonal.  Then $\fN$ admits a 2-step nilpotent Lie algebra structure of type (p,q) obtained by setting W in the center of $\fN$ and defining  $\langle [v,w], Z \rangle = Z(v) \cdot w$ for all $v,w \in \br^{q}$ and $Z \in W$.  It is easy to show that $[\fN,\fN] = W$.  We say that $\{\fN, \langle , \rangle \}$ is a metric 2-step nilpotent Lie algebra of type (p,q) in $\mathit{standard~ form}$.   Every metric 2-step nilpotent Lie algebra $\{\fN, \langle , \rangle \}$ of type (p,q) is isometric and isomorphic  to one in standard form. See for example Proposition 2.6 of [E2]
\newline

\begin{proposition}  Let $W_{1}, W_{2}$ be elements of $G(p, \fs \fo(q,\br))$, and let $\fN_{1} = \br^{q} \oplus W_{1}$ and $\fN_{2} = \br^{q} \oplus W_{2}$ be the corresponding metric 2-step nilpotent Lie algebras of type(p,q) in standard form.  Then $\fN_{1}$ is isometric and isomorphic to $\fN_{2} \Leftrightarrow$ there exists $g \in O(q,\br)$ such that $gW_{1}g^{-1} = W_{2}$.
\end{proposition}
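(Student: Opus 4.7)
The plan is to treat the two directions separately, using only the explicit description of the standard form and the defining formula $\langle [v,w],Z\rangle = Z(v)\cdot w$.

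For the direction $(\Leftarrow)$, suppose $g \in O(q,\br)$ with $gW_1 g^{-1} = W_2$. I would define a map $\Phi : \fN_1 \to \fN_2$ by $\Phi(v) = gv$ for $v \in \br^q$ and $\Phi(Z) = gZg^{-1}$ for $Z \in W_1$. First verify that $\Phi$ is a linear isometry: the restriction to $\br^q$ is an isometry because $g \in O(q,\br)$, and the restriction to $W_1 \to W_2$ preserves $\langle\,,\,\rangle_0$ because $\langle gZ_1 g^{-1}, gZ_2 g^{-1}\rangle_0 = -\mathrm{tr}(gZ_1 Z_2 g^{-1}) = -\mathrm{tr}(Z_1 Z_2) = \langle Z_1, Z_2\rangle_0$ by the cyclicity of trace; $\br^q$ is orthogonal to $W_1$ in $\fN_1$ and its image is orthogonal to $W_2$ in $\fN_2$. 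Then verify that $\Phi$ is a Lie algebra homomorphism: the only nontrivial brackets in either algebra are those of vectors in $\br^q$, and for any $v,w \in \br^q$ and any $Z \in W_1$,
\[
\langle [gv,gw]_2, gZg^{-1}\rangle = (gZg^{-1})(gv)\cdot gw = gZ(v)\cdot gw = Z(v)\cdot w = \langle [v,w]_1, Z\rangle.
\]
Because $gZg^{-1}$ ranges over all of $W_2$ as $Z$ ranges over $W_1$, and the inner product on $W_2$ equals $\langle\,,\,\rangle_0$ restricted to $W_2$, this identifies $[gv,gw]_2 = \Phi([v,w]_1)$.

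For the direction $(\Rightarrow)$, suppose $\Phi : \fN_1 \to \fN_2$ is a linear isometry and a Lie algebra isomorphism. Since $\Phi$ respects brackets it sends $W_1 = [\fN_1,\fN_1]$ to $W_2 = [\fN_2,\fN_2]$, and since $\Phi$ is an isometry it sends the orthogonal complement $\br^q \subset \fN_1$ to the orthogonal complement $\br^q \subset \fN_2$. Set $g = \Phi|_{\br^q} \in O(q,\br)$ and $\phi = \Phi|_{W_1} : W_1 \to W_2$. The claim is that $\phi(Z) = gZg^{-1}$ for all $Z \in W_1$; this immediately yields $gW_1g^{-1} = W_2$. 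To prove the claim, unfold the condition that $\Phi$ preserves both the inner product and the bracket: for $v,w \in \br^q$ and $Z \in W_1$,
\[
Z(v)\cdot w = \langle [v,w]_1, Z\rangle = \langle \Phi[v,w]_1, \Phi Z\rangle = \langle [gv,gw]_2, \phi(Z)\rangle = \phi(Z)(gv)\cdot gw.
\]
Rewriting the right-hand side as $w \cdot g^{-1}\phi(Z)g(v)$ and using the non-degeneracy of the dot product on $\br^q$ forces $Z = g^{-1}\phi(Z)g$, as required.

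The two directions are genuinely routine once the setup is right; the only subtlety, and the place where I expect the main obstacle to lie, is the $(\Rightarrow)$ direction, specifically the argument that $\Phi$ must map $\br^q$ to $\br^q$. This uses that $W_i$ is characterised intrinsically as $[\fN_i,\fN_i]$ (hence preserved by any Lie algebra isomorphism) together with the isometry hypothesis applied to the orthogonal complement; once these two points are combined, the identification $\phi(Z) = gZg^{-1}$ follows mechanically from the bilinear identity above.
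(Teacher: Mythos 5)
Your proof is correct and follows essentially the same approach as the paper's: in the forward direction you identify $g$ as the restriction of the isomorphism to $\br^q$ (using that a Lie algebra isomorphism preserves $[\fN_i,\fN_i]=W_i$ and an isometry preserves the orthogonal complement $\br^q$), then unfold the bracket identity and the nondegeneracy of the dot product to conclude $\phi(Z)=gZg^{-1}$; in the reverse direction you build $\Phi$ explicitly from $g$ and conjugation, exactly as the paper does. The only cosmetic difference is that the paper runs its computation with $Z\in W_2$ and the inverse map, while you run it with $Z\in W_1$, which is an equivalent formulation.
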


\noindent  As an immediate consequence we obtain

\begin{corollary}  The space I(p,q) of isometry classes of metric, simply connected, 2-step nilpotent Lie groups of type (p,q) is bijectively equivalent to $G(p,\fs \fo(q,\br)) / O(q,\br)$, where $O(q,\br)$ acts on  $G(p,\fs \fo(q,\br))$ by conjugation.
\end{corollary}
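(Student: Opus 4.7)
The plan is to deduce the corollary almost verbatim from Proposition 7.1, together with the preceding reduction to standard form. Concretely, I would define the map
\[
\Phi : G(p, \fs \fo(q,\br)) \longrightarrow I(p,q), \qquad \Phi(W) = [\{\fN_W, \langle,\rangle\}],
\]
where $\fN_W = \br^{q} \oplus W$ is the metric $2$-step nilpotent Lie algebra of type $(p,q)$ in standard form associated to $W$, and $[\cdot]$ denotes its isometry-and-isomorphism class. The corollary is exactly the assertion that $\Phi$ factors through a bijection of $G(p,\fs \fo(q,\br))/O(q,\br)$ with $I(p,q)$.

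First I would establish that $\Phi$ is surjective. This is precisely the content of the sentence preceding the statement of Proposition~7.1: every metric $2$-step nilpotent Lie algebra $\{\fN, \langle,\rangle\}$ of type $(p,q)$ is isometric and isomorphic to one in standard form (Proposition~2.6 of [E2]). Hence every isometry class in $I(p,q)$ is of the form $\Phi(W)$ for some $W \in G(p,\fs \fo(q,\br))$.

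Next I would determine the fibers of $\Phi$. By Proposition~7.1, $\Phi(W_1) = \Phi(W_2)$ if and only if there exists $g \in O(q,\br)$ with $gW_1 g^{-1} = W_2$, that is, if and only if $W_1$ and $W_2$ lie in the same $O(q,\br)$-orbit under the conjugation action on $G(p,\fs \fo(q,\br))$. Consequently $\Phi$ descends to a well-defined and injective map
\[
\overline{\Phi} : G(p,\fs \fo(q,\br))/O(q,\br) \longrightarrow I(p,q),
\]
and combining this with the surjectivity of $\Phi$ shows that $\overline{\Phi}$ is a bijection, which is the claim of the corollary.

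There is no real obstacle here: all of the geometric and algebraic content is absorbed into the reduction-to-standard-form result and into Proposition~7.1. The only thing to verify is that the $O(q,\br)$-action on $G(p,\fs \fo(q,\br))$ by conjugation is indeed a group action (which is routine, since conjugation of skew-symmetric matrices by orthogonal matrices preserves $\fs \fo(q,\br)$ and hence permutes its $p$-dimensional subspaces). Thus the proof reduces to a short formal argument once Proposition~7.1 is in hand.
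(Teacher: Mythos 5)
Your proposal is correct and takes essentially the same approach the paper intends: the paper states the corollary as "an immediate consequence" of Proposition 7.1, and your argument simply spells out that immediacy by combining the reduction-to-standard-form fact (Proposition 2.6 of [E2]) for surjectivity with Proposition 7.1 to identify the fibers of the natural map $W \mapsto [\fN_W]$ with $O(q,\br)$-conjugacy orbits.
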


\begin{proof}  We prove Proposition 7.1.  We suppose first that there exists an isometry and Lie algebra isomorphism $\varphi : \fN_{1} \rightarrow \fN_{2}$.  Let $[,]_{1}$ and $[,]_{2}$ denote the Lie algebra structures  on $\fN_{1}$ and $\fN_{2}$ respectively.  Note that $\varphi ([\fN_{1},\fN_{1}]_{1}) = [\fN_{2}, \fN_{2}]_{2}$, or equivalently $\varphi(W_{1}) = W_{2}$, since $\varphi$ is a Lie algebra isomorphism.  Hence $\varphi(\br^{q}) = \br^{q}$ since $\varphi$ is an isometry, and $\br^{q}$ is the orthogonal complement of both $W_{1}$ and $W_{2}$.  Let $g \in O(q,\br)$ denote the restriction of $\varphi$ to $\br^{q}$.
\newline

\noindent  We assert that $gW_{1}g^{-1} = W_{2}$.  Let $x,y \in \br^{q}$ and $Z \in W_{2}$ be given.  From the definitions we obtain $(\varphi^{-1}Z) x\cdot y = \langle [x,y]_{1} , \varphi^{-1}Z \rangle = \langle \varphi([x,y]_{1}), Z \rangle = \langle [\varphi x, \varphi y]_{2}, Z \rangle = Z(\varphi x) \cdot \varphi y = Z(gx) \cdot gy = (g^{-1}Zg) x \cdot y$.  Hence $\varphi^{-1}Z = g^{-1} Z g$ for all $Z \in W_{2}$ since $x,y \in \br^{q}$ were arbitrary.  It follows that $W_{1} = \{\varphi^{-1} Z : Z \in W_{2} \} = g^{-1}W_{2}g$.
\newline

\noindent  Conversely, suppose that $gW_{1} g^{-1} = W_{2}$ for some element g of $O(q,\br)$.  Define $\varphi : \fN_{1} \rightarrow \fN_{2}$ by $\varphi = g$ on $\br^{q}$ and $\varphi(Z) = gZg^{-1}$ for all $Z \in W_{1}$.  We show that $\varphi$ is an isometry and a Lie algebra automorphism.  One sees immediately that $\varphi$ is a linear isometry since conjugation by g is an isometry of $\fs \fo(q,\br)$ with respect to the canonical inner product $\langle , \rangle_{0}$.
\newline

\noindent  We show that $\varphi : \fN_{1} \rightarrow \fN_{2}$ is a Lie algebra automorphism.  It suffices to show that $\varphi([x,y]_{1}) = [\varphi x, \varphi y]_{2}$ for all $x,y \in \br^{q}$.  Let $x,y \in \br^{q}$ and $Z \in W_{2}$ be given.  Then $\langle [\varphi x, \varphi y]_{2} , Z \rangle = Z(\varphi x) \cdot \varphi y = Z(gx) \cdot gy = (g^{-1}Zg) x \cdot y$.  Finally,$ \langle \varphi([x,y]_{1} , Z \rangle = \langle g [x,y]_{1} g^{-1} , Z \rangle = \langle [x,y]_{1}, g^{-1}Zg \rangle = (g^{-1}Zg)x \cdot y \rangle = \langle [\varphi x, \varphi y]_{2} , Z \rangle$.  It follows that  $\varphi([x,y]_{1}) = [\varphi x, \varphi y]_{2}$ for all $x,y \in \br^{q}$ since $Z \in W_{2}$ was arbitrary. 
\end{proof}

\section{metric 2-step nilpotent Lie groups where the maximal primitivity condition holds almost everywhere }

\noindent  A set A in a vector space V over a field K is $\mathit{Zariski~ closed}$ if there exist finitely many polynomials $f_{i} : V \rightarrow K, 1 \leq i \leq m$, such that $A = \{v \in V : f_{i}(v) = 0~for~1 \leq i \leq m \}$.  A set A in V is $\mathit{Zariski~ open}$ if $V - A$ is Zariski closed.  If $K = \br$ or $ \bc$, then V has a natural vector space topology $\fT$ and any Zariski closed subset A of V is closed in $\fT$ and contains no $\fT$-open subset.  In this case Zariski closed subsets have Lebesgue measure zero in V since the sets where the polynomials $\{f_{i} \}$ vanish have dimension lower than that of V.  Hence, if K $= \br$ or $\bc$, then any Zariski open subset A of V is open and dense in V with respect to $\fT$ and has full Lebesgue measure.
\newline

\noindent  Let $q \geq 2$ be given.  For an integer $m \geq 2$ let $O_{m}$ be the set of elements $Z \in \fs \fo(q,\br)$ such that the eigenvalues of Z are distinct and the ratio of any two distinct nonzero eigenvalues is not equal to $m$ or $-m$.  \newline

\begin{proposition}  The set $O_{m}$ is a nonempty Zariski open subset of $\fs \fo(q,\br)$ for every integer $m \geq 2$.  Moreover, $O_{m}$ is invariant under conjugation by elements of $O(q,\br)$ for every $m \geq 2$.
\end{proposition}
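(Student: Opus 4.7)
The plan has three parts, one per assertion. The invariance under $O(q,\br)$-conjugation is immediate: for $g \in O(q,\br)$ and $Z \in \fs\fo(q,\br)$ the matrix $gZg^{-1}$ is again skew-symmetric, and its eigenvalues with multiplicities coincide with those of $Z$; both defining conditions of $O_m$ depend only on the eigenvalue multiset, so both are preserved under conjugation.

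For the Zariski open assertion, my strategy is to rewrite the two defining conditions as the simultaneous non-vanishing of explicit polynomials in the entries of $Z$. The key structural observation is that for $Z \in \fs\fo(q,\br)$, using $Z^{T} = -Z$ one has $p_{Z}(x) = \det(xI - Z) = \det(xI + Z^{T}) = \det(xI + Z) = (-1)^{q} p_{Z}(-x)$, so $p_{Z}$ is an even polynomial when $q$ is even and an odd polynomial when $q$ is odd. Consequently there is a unique monic polynomial $q_{Z} \in \br[x]$ of degree $P = [\frac{1}{2}q]$, whose coefficients are polynomial functions of the entries of $Z$, such that $p_{Z}(x) = q_{Z}(x^{2})$ if $q$ is even and $p_{Z}(x) = x\, q_{Z}(x^{2})$ if $q$ is odd. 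Over $\bc$ the roots of $q_{Z}$ are precisely the numbers $-\lambda_{j}^{2}$, where $\pm i\lambda_{j}$ range over the nonzero eigenvalue pairs of $Z$ counted with multiplicity.

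I then translate the two conditions as follows. The condition that all eigenvalues of $Z$ are distinct is equivalent to $q_{Z}$ being squarefree together with $q_{Z}(0) \neq 0$, i.e.\ $D_{1}(Z) := \mathrm{disc}(q_{Z}) \neq 0$ and $D_{2}(Z) := q_{Z}(0) \neq 0$. Under these, the condition that no ratio of two distinct nonzero eigenvalues of $Z$ equals $\pm m$ is equivalent to the assertion that no two distinct roots $\mu,\nu$ of $q_{Z}$ satisfy $\nu = m^{2}\mu$, and this is captured by the non-vanishing of $R_{m}(Z) := \mathrm{Res}_{x}(q_{Z}(x), q_{Z}(m^{2}x))$; the only ``diagonal'' way the resultant can vanish is via $\mu = m^{2}\mu$, which forces $\mu = 0$ and is already excluded by $D_{2}(Z) \neq 0$. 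Since $D_{1}, D_{2}, R_{m}$ are polynomial functions of the entries of $Z$, the set $O_{m}$ is the complement in $\fs\fo(q,\br)$ of the zero locus of the single polynomial $D_{1} D_{2} R_{m}$, and is therefore Zariski open.

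For non-emptiness, I would exhibit an explicit element. Let $Z_{0} \in \fs\fo(q,\br)$ be the block-diagonal matrix with $P$ skew-symmetric $2 \times 2$ blocks whose upper-right entries are $\lambda_{1}, \dots, \lambda_{P}$, supplemented by a trailing zero row and column when $q$ is odd. Taking $\lambda_{j} = \pi^{j}$, the nonzero eigenvalues of $Z_{0}$ are $\{\pm i \pi^{j} : 1 \leq j \leq P\}$, all distinct, and every ratio $\pi^{j-k}$ with $j \neq k$ is transcendental, hence not an integer; therefore $Z_{0} \in O_{m}$ for every $m \geq 2$. The main obstacle in the plan is the resultant bookkeeping in paragraph three---verifying that once $D_{1} D_{2} \neq 0$ is imposed, the vanishing locus of $R_{m}$ corresponds exactly to the failure of the ratio condition and produces no further spurious zeros; the remainder reduces to standard properties of characteristic polynomials, discriminants and resultants.
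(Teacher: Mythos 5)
Your proof is correct and takes a genuinely different route from the paper's. The paper works directly with the degree-$2q$ product $\psi_m(Z)(t) = \det(tI-Z)\det(tI-mZ)$ (dividing by $t^2$ when $q$ is odd to remove the forced double root at zero), and encodes the ratio condition through a discriminant-type criterion for that product; it runs this through the general framework of ``polynomial maps between vector spaces'' developed in the appendix (Propositions 10.1--10.14). You instead exploit the parity of the characteristic polynomial of a skew-symmetric matrix to pass to the monic degree-$P$ polynomial $q_Z$ satisfying $p_Z(t) = q_Z(t^2)$ or $t\,q_Z(t^2)$, and then express $O_m$ as the non-vanishing locus of the single polynomial $\mathrm{disc}(q_Z)\cdot q_Z(0)\cdot \mathrm{Res}_x(q_Z(x), q_Z(m^2 x))$, each factor of which is visibly a polynomial in the entries of $Z$ because $q_Z$ has fixed degree $P$ and polynomially varying coefficients. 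This gives a uniform treatment of even and odd $q$ without the ad hoc division by $t^2$, isolates the three failure modes (repeated eigenvalues, zero eigenvalue, bad ratio) into three transparent polynomial conditions, and bypasses the paper's polynomial-map machinery entirely. Your explicit witness with $\lambda_j = \pi^j$ also makes nonemptiness concrete, which the paper leaves implicit. The $O(q,\br)$-invariance argument is the same in both.
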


\noindent  The $O(q,\br)$ invariance assertion is obvious by inspection.  To avoid becoming bogged down in the proof of the remaining assertion we postpone its proof and include it in an appendix (See Proposition 10.13 and the remarks that follow).  We now come to the main result of this section

\begin{proposition}  Let $q \geq 2$ be an integer, and let $P = [\frac{1}{2} q]$.  Let p be a positive integer with $p \leq \frac{1}{2}q(q-1) = dim~\fs \fo(q,\br)$.  Let $O = \bigcap_{m=2}^{\infty} O_{m}$.  Let $U = \{W \in G(p, \fs \fo(q,\br)  : W \cap O~is~nonempty \}$. For $W \in G(p, \fs \fo(q,\br))$ let $N_{W}$ denote the simply connected 2-step nilpotent Lie group with left invariant metric whose metric Lie algebra is $\fN = \br^{q} \oplus W$.   Then 

	1)  O is an open subset of $\fs \fo(q,\br)$ in the vector space topology invariant under conjugation by elements of $O(q,\br)$.  The set $\fs \fo(q,\br) - O$ has Lebesgue measure zero.
	
	2)  U is a dense open subset of $G(p,\fs \fo(q,\br))$ invariant under conjugation by elements of $O(q,\br)$.
	
	3)  If $W \in U$, then $W \cap O$ is an open set in W in the subspace topology such that $W - (W \cap O)$ has Lebesgue measure zero.
	
	4)  For $W \in U$ and $Z \in W \cap O$ the geodesic $\gamma_{Z}$ in $N_{W}$ has P distinct primitive conjugate values.
\end{proposition}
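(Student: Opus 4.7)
The plan is to establish the four assertions in order, using crucially the Zariski openness and nonemptiness of each $O_m$ supplied by Proposition 8.1.

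For (1), each $O_m$ is Zariski open and nonempty in $\fs\fo(q,\br)$, hence open, dense, of full Lebesgue measure, and $O(q,\br)$-invariant. Invariance of $O$ is then inherited, and $\fs\fo(q,\br) - O = \bigcup_{m\geq 2}(\fs\fo(q,\br) - O_m)$ is a countable union of null sets, hence has measure zero. The one delicate point is openness of $O$, since countable intersections of open sets need not be open. I would argue locally using continuity of eigenvalues: at any $Z \in O$, on a sufficiently small neighborhood $V$ of $Z$ in $\fs\fo(q,\br)$ the ratios $\lambda_j/\lambda_k$ of distinct nonzero eigenvalues are uniformly bounded by some integer $M$, so every $Z' \in V$ lies automatically in $O_m$ whenever $m > M$; thus $V \cap O = V \cap \bigcap_{m=2}^{M} O_m$, a finite intersection of open sets, hence a neighborhood of $Z$.

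For (2), openness of $U$ is a direct application of Lemma 6.2 to the open set $O$, and invariance of $U$ under $O(q,\br)$-conjugation follows from invariance of $O$. To show density, given $W$ and a neighborhood of $W$ in $G(p,\fs\fo(q,\br))$, I would pick an orthonormal basis $f_1, \ldots, f_p$ of $W$, use density of $O$ in $\fs\fo(q,\br)$ to pick $Z_0 \in O$ close to $f_1$, and form $W' = \br$-span$\{Z_0, f_2, \ldots, f_p\}$; continuity of the quotient map $q : V(p,n) \to G(p,n)$ from Section 6 places $W'$ close to $W$, and $Z_0 \in W' \cap O$ yields $W' \in U$. For (3), openness of $W \cap O$ in $W$ is immediate from openness of $O$. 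The measure-zero assertion requires the Zariski structure again: the complement of $O_m$ in $\fs\fo(q,\br)$ is the zero set of finitely many polynomials, and its intersection with the linear subspace $W$ is therefore Zariski closed in $W$; since $W \cap O_m \supseteq W \cap O \neq \emptyset$, this Zariski closed set is a proper subvariety of $W$, hence has dimension $< p$ and Lebesgue measure zero in $W$, and $W - (W \cap O)$ is a countable union of such null sets.

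For (4), direct computation in the standard form $\fN = \br^{q} \oplus W$ from the defining identity $\langle j(Z)v, w\rangle = \langle [v,w], Z\rangle = Z(v) \cdot w$ shows that $j(Z)$ acts on $\br^{q}$ as the skew-symmetric matrix $Z$ itself. Hence for $Z \in W \cap O$ the eigenvalues of $j(Z)$ are all distinct, and no ratio of two distinct nonzero eigenvalues is an integer of absolute value $\geq 2$; assertion (2) of Proposition 5.1 then yields exactly $P = [\frac{1}{2} q]$ distinct primitive conjugate values of $\gamma_Z$. I expect the main obstacles to be openness of $O$ in (1) and the measure-zero assertion in (3), both of which would fail for a general countable intersection of open dense sets and depend essentially on the polynomial (algebraic) nature of the conditions defining $O_m$ supplied by Proposition 8.1.
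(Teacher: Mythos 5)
Your proof is correct, and on two of the four assertions you take a genuinely cleaner route than the paper. For (1), where the paper simply asserts openness of $O$ is "evident" from continuity of eigenvalues, you supply the needed local finiteness argument: near any $Z \in O$ the nonzero eigenvalues are bounded above and away from zero, so their ratios are bounded by some $M$, and locally $O$ agrees with the finite intersection $\bigcap_{m=2}^M O_m$ of open sets. This is exactly the point one might worry about (countable intersections of open sets are not open in general), and you address it. For (2), the paper shows each $U_m$ is dense and open, proves $U = \bigcap_{m\geq 2} U_m$ (the nontrivial inclusion requiring a Baire argument inside each $W \in \bigcap U_m$, using the Zariski openness of $O_m \cap W$ from Proposition 10.4), and then invokes Baire again in the Grassmannian. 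Your argument is more direct: since $O$ is already dense in $\fs\fo(q,\br)$, you perturb a single basis vector of a given $W$ into $O$ and observe that the perturbed plane $W'$ automatically meets $O$, while continuity of $q : V(p,n) \to G(p,n)$ puts $W'$ near $W$. This eliminates the intermediate sets $U_m$, the identity $U = \bigcap U_m$, and the double use of Baire. Your treatments of (3) and (4) are essentially the same as the paper's, with (4) benefiting from your explicit observation that in the standard form $j(Z)$ acts on $\br^q$ as the matrix $Z$ itself, so the eigenvalue conditions defining $O$ translate directly into the hypotheses of Proposition 5.1(2).
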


\begin{proof} 1) The set  $O = \bigcap_{m=2}^{\infty} O_{m}$ is invariant under conjugation by elements of $O(q,\br)$ since each set $O_{m}$ has this property by Proposition 8.1.  By definition O is the set of Z in $\fs \fo(q,\br)$ such that the eigenvalues of Z are distinct and the ratio of any two distinct nonzero eigenvalues is not an integer of absolute value $\geq 2$.  It is evident that O is open in the vector space topology of $\fs \fo(q,\br)$ since the characteristic polynomial of Z, and hence also the eigenvalues of Z, depend continuously on Z.  By the discussion above and Proposition 8.1 the set $\fs \fo(q,\br) - O_{m}$ has Lebesgue measure zero for every $m \geq 2$.  Hence $\fs \fo(q,\br) - O = \bigcup_{m=2}^{\infty} (\fs \fo(q,\br) - O_{m})$ has Lebesgue measure zero.

	2)  Since O is open in the vector space topology of $\fs \fo(q,\br))$ it follows by Lemma 6.2 that U is open in $G(p,\fs \fo(q,\br))$.  The set U is invariant under conjugation by elements of $O(q,\br)$ since O has this property by 1). 
\newline  

\noindent For an integer $m \geq 2$ let $U_{m} = \{W \in G(p,\fs \fo(q,\br)) : O_{m} \cap W~\rm{is~nonempty} \}$.   We show that $U_{m}$ is a dense open subset of  $G(p,\fs \fo(q,\br))$ for all integers $m \geq 2$.  Then we show that  $U = \bigcap_{m=2}^{\infty} U_{m}$ to conclude that U is dense in $G(p,\fs \fo(q,\br))$.
\newline 

\noindent  Note that  each set $U_{m}$ is open in $G(p,\fs \fo(q,\br))$ by Lemma 6.2 since $O_{m}$ is open in $\fs \fo(q,\br)$.  Next we show that $U_{m}$ is dense in $G(p, \fs \fo(q,\br))$ for every integer $m \geq 2$.  Fix an integer $m \geq 2$ and let $W \in G(p,\fs \fo(q,\br))$  be given.  Let $\fB = \{ Z_{1}, ... , Z_{p}\}$ be a basis of W.  By Proposition 8.1 the set $O_{m}$ is dense and open in $\fs \fo(q,\br)$  in the vector space topology.   For each integer i with $1 \leq i \leq p$ let $\{Z_{i}^{(k)} \}$ be a sequence in $O_{m}$ that converges to $Z_{i}$ as $k \rightarrow \infty$.  For sufficiently large k the set $\fB_{k} = \{Z_{1}^{(k)}, ..., Z_{p}^{(k)} \}$ is linearly independent, and without loss of generality we may assume that this is always the case.  Let $W_{k} = \br$-span$\{Z_{1}^{(k)}, ... , Z_{p}^{(k)} \} \in G(p,\fs \fo(q,\br))$.  Clearly $W_{k} \rightarrow W$ in $G(p,\fs \fo(q,\br))$ as $k \rightarrow \infty$ by the continuity of the map $q : V(p,\fs \fo(q,\br)) \rightarrow G(p,\fs \fo(q,\br))$ (see section 6).  On the other hand $W_{k} \in U_{m}$ for every k since $W_{k} \cap O_{m} \supset \fB_{k}$.  Hence $U_{m}$ is dense in $G(p,\fs \fo(q,\br))$.
\newline
	
\noindent To show that U is dense in $G(p,\fs \fo(q,\br))$ it suffices by the Baire category theorem to show that $U = \bigcap_{m=2}^{\infty} U_{m}$.
\newline
	
\noindent  If $W \in U$, then $O \cap W$ is nonempty by the definition of U and hence $O_{m} \cap W$ is nonempty for all $m \geq 2$ by the definition of O.  It follows that $W \in \bigcap_{m=2}^{\infty} U_{m}$, which proves that $U \subseteq \bigcap_{m=2}^{\infty} U_{m}$.  Conversely, let $W \in \bigcap_{m=2}^{\infty} U_{m}$.  Then $O_{m} \cap W$ is nonempty for all $m \geq 2$.  By Proposition 8.1 and Proposition 10.4 $O_{m} \cap W$ is a nonempty subset of W that is open in the Zariski topology of W.  In particular,  $O_{m} \cap W$ is dense and open in the vector space topology of W for all $m \geq 2$.  It follows from the Baire category theorem that $O \cap W = \bigcap_{m=2}^{\infty} O_{m} \cap W$ is dense in W.  In particular, $O \cap W$ is nonempty, which shows that $W \in U$.  We have proved that $\bigcap_{m=2}^{\infty} U_{m} \subseteq U$.
\newline

\noindent 3)  Let $W \in U$ be given.  The set $W \cap O$ is open in the subspace topology of W since O is open in $\fs \fo(q,\br)$.  Moreover, $\bigcap_{m=2}^{\infty} O_{m} \cap W = O \cap W$ is nonempty by the definition of U.  By Proposition 8.1 and Proposition 10.4 the set $O_{m} \cap W$ is nonempty and open in the Zariski topology of W for all $m \geq 2$.  In particular $W - (O_{m} \cap W)$ has Lebesgue measure zero in W for all $m \geq 2$.  It follows that $W - (O \cap W) = \bigcup_{m=2}^{\infty} W - (O_{m} \cap W)$ has Lebesgue measure zero in W.
\newline

\noindent 4).  This follows from 2) of Proposition 5.1, the definition of $O_{m}$ and the fact that $O = \bigcap_{m=2}^{\infty} O_{m}$.
\end{proof}

\begin{corollary}  Let p,q be positive integers with $q \geq 2$ and $p \leq \frac{1}{2}q(q-1)$.  Then there exists a dense open subset $U'$ of I(p,q) with the following property :  Let $\{\fN, \langle , \rangle \}$ be a metric, 2-step nilpotent Lie algebra whose isometry class lies in $U'$, and let $\{N , \langle , \rangle \}$ denote the corresponding simply connected, 2-step nilpotent Lie group with left invariant metric $\langle , \rangle$.  Then there exists a dense open subset O of full Lebesgue measure in $[\fN , \fN]$ such that the geodesic $\gamma_{Z}$ has $P = [\frac{1}{2}q]$ distinct primitive conjugate values for each nonzero element Z of O. 
\end{corollary}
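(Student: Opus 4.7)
The plan is to transfer the Grassmannian-level statement of Proposition 8.2 to the quotient $I(p,q) \cong G(p,\fs\fo(q,\br))/O(q,\br)$ furnished by Corollary 7.2. Let $U \subseteq G(p,\fs\fo(q,\br))$ and $O \subseteq \fs\fo(q,\br)$ be the sets constructed in Proposition 8.2, and let $\pi : G(p,\fs\fo(q,\br)) \to I(p,q)$ denote the quotient projection by the conjugation action of $O(q,\br)$. I would define $U' = \pi(U)$ and show that this set has the desired properties.

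First I would verify that $U'$ is open and dense in $I(p,q)$. Because $U$ is $O(q,\br)$-invariant by Proposition 8.2(2), we have $\pi^{-1}(U') = U$; since $U$ is open in $G(p,\fs\fo(q,\br))$, the definition of the quotient topology immediately gives that $U'$ is open in $I(p,q)$. Density follows from the continuity and surjectivity of $\pi$: the image under $\pi$ of the dense set $U$ is dense in the quotient.

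Next, given a metric Lie algebra $\{\fN, \langle,\rangle\}$ whose isometry class lies in $U'$, the standard-form construction recalled at the beginning of section 7 (via Proposition 2.6 of [E2]) permits me to replace $\{\fN,\langle,\rangle\}$ by an isometric and isomorphic model $\br^{q} \oplus W$ in standard form with $W \in G(p,\fs\fo(q,\br))$, where $[\fN,\fN] = W$. Because $\pi(W) \in U' = \pi(U)$ and $U$ is $O(q,\br)$-invariant, there exists $g \in O(q,\br)$ with $gWg^{-1} \in U$; conjugation by $g$ yields an isometric Lie algebra isomorphism between the corresponding standard forms by Proposition 7.1, so without loss of generality $W \in U$. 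Setting $O_{\fN} := W \cap O \subset [\fN,\fN]$, parts (3) and (4) of Proposition 8.2 deliver exactly what is required: $O_{\fN}$ is open in $W$, its complement $W - O_{\fN}$ has Lebesgue measure zero, and for every nonzero $Z \in O_{\fN}$ the geodesic $\gamma_{Z}$ in $N$ has $P = [\frac{1}{2}q]$ distinct primitive conjugate values.

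The corollary is essentially a packaging of Proposition 8.2 through the isometry-class description of section 7, and the only real step is checking openness of $U'$ in the quotient topology, which reduces immediately to the $O(q,\br)$-invariance of $U$. All analytic content — the density of the generic eigenvalue condition and the consequence for primitive conjugate values — has already been absorbed into Propositions 5.1 and 8.2, so I expect no serious obstacle beyond this bookkeeping.
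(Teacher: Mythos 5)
Your proof takes essentially the same approach as the paper: define $U' = \pi(U)$ and pull the conclusions back from Proposition 8.2 via the standard-form model. The only cosmetic difference is that you establish openness of $U'$ via saturation and the quotient topology, whereas the paper simply cites that $\pi$ is an open map; both routes are valid and equivalent in substance.
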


\begin{proof}  Let $\pi : G(p,\fs \fo(q,\br)) \rightarrow G(p,\fs \fo(q,\br)) / O(q,\br) = I(p,q)$ denote the projection map, where $O(q,\br)$ acts on $G(p,\fs \fo(q,\br))$ by conjugation.  Let U be the dense open subset in $G(p,\fs \fo(q,\br))$ of Proposition 8.2.  Then $U' = \pi(U)$ is dense and open in I(p,q) since $\pi$ is an open, continuous, surjective map.   By Proposition 8.2 the set $U'$ satisfies the assertions of Corollary 8.3. 
\end{proof}

\section{Weak conjugacy in $G(p, \fs \fo(q,\br))$}

\noindent  Two elements $W_{1}, W_{2}$ in $G(p, \fs \fo(q,\br))$ are $\mathit{conjugate}$ if $gW_{1}g^{-1} = W_{2}$ for some $g \in O(q,\br)$.  We have seen in section 7 that $W_{1}, W_{2}$ are conjugate $\Leftrightarrow$ there exists a Lie algebra isomorphism $\varphi : \br^{q} \oplus W_{1} \rightarrow \br^{q} \oplus W_{2}$ that is also a linear isometry.
\newline

\noindent  We say that two elements $W_{1}, W_{2}$ in $G(p, \fs \fo(q,\br))$ are $\mathit{weakly~ conjugate}$ if there exists a linear isomorphism $\varphi : W_{1} \rightarrow W_{2}$ such that for every Z in $W_{1}, \varphi(Z)$ is conjugate to Z by an element g(Z) in $O(q,\br)$ that may depend on Z.  Note that such a map $\varphi$ must also be a linear isometry since $|\varphi(Z)|^{2} = - trace~ \varphi(Z) ^{2} = - trace~(g(Z)Z g(Z)^{-1})^{2} = - trace~g(Z) Z^{2} g(Z)^{-1} = - trace~Z^{2} = |Z|^{2}$ for all $Z \in W_{1}$.  
\newline

\noindent  It is natural to ask if any two weakly conjugate  elements $W_{1}, W_{2}$ in $G(p, \fs \fo(q,\br))$ are actually conjugate.  This is known to be false if $p = 2$.  See Theorem 2.2 of [GW].  It is unknown if this question has an affirmative answer for sufficiently large p ; as p increases the condition of weak conjugacy becomes more stringent.The next result shows that the weak conjugacy question has geometric relevance.
\newline

\begin{proposition}   Let an integer $q \geq 2$ and an integer p with $1 \leq p \leq \frac{1}{2}q(q-1)$ be given.  Let U be the dense open subset of $G(p, \fs \fo(q,\br))$ defined in Proposition 8.2.  Let elements $W_{1},W_{2}$ in $G(p, \fs \fo(q,\br))$ with $W_{1} \in U$ be given.  Let $N_{1},N_{2}$ be the simply connected, 2-step nilpotent Lie groups with left invariant metrics whose metric Lie algebras are $\fN_{1} = \br^{q} \oplus W_{1}, \fN_{2} = \br^{q} \oplus W_{2}$ respectively.  Let $\varphi : W_{1} \rightarrow W_{2}$ be a vector space isomorphism such that the geodesics  $\gamma_{Z}$ and $\gamma_{\varphi(Z)}$ have the same conjugate locus in $N_{1}$ and $N_{2}$ respectively.  Then $\varphi : W_{1} \rightarrow W_{2}$ is a weak conjugacy.
\end{proposition}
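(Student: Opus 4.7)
The plan is to reduce weak conjugacy to a spectral comparison for each $Z \in W_{1}$, and then use the density/continuity results already in place. Since $\fN_{1}$ and $\fN_{2}$ are in standard form, the operator $j$ on each is the inclusion $W_{i} \hookrightarrow \fs\fo(q,\br)$, so $j(Z) = Z$ and $j(\varphi(Z)) = \varphi(Z)$ as skew symmetric matrices. It is a standard fact that two real skew symmetric matrices are conjugate under $O(q,\br)$ if and only if they share the same list of eigenvalues with multiplicities, via their canonical block-diagonal form. Hence it suffices to show that $Z$ and $\varphi(Z)$ have identical spectra (with multiplicities) for every $Z \in W_{1}$.

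First I would handle a dense set of $Z$. Since $W_{1} \in U$, Proposition 8.2 provides a subset $W_{1} \cap O$ that is open, dense, and of full Lebesgue measure in $W_{1}$, and for each $Z \in W_{1} \cap O$ the geodesic $\gamma_{Z}$ has $P = [\frac{1}{2}q]$ distinct primitive conjugate values. The hypothesis forces $\gamma_{\varphi(Z)}$ to have the same conjugate locus, hence also $P$ distinct primitive conjugate values. Applying assertion 3) of Proposition 5.1 to both geodesics simultaneously, I read off that the nonzero eigenvalues of $Z$ and of $\varphi(Z)$ are both equal to $\{\pm i \lambda_{1}, \ldots, \pm i \lambda_{P}\}$, where $t_{k} = \frac{2\pi}{\lambda_{k}}$ enumerates the common primitive conjugate values; each such eigenvalue has multiplicity one, and if $q$ is odd then $0$ is the remaining simple eigenvalue on both sides. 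Thus the spectra agree, and $Z$ and $\varphi(Z)$ are conjugate via some $g(Z) \in O(q,\br)$.

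To upgrade this to every $Z \in W_{1}$, I would invoke compactness of $O(q,\br)$. Given $Z \in W_{1}$, choose a sequence $Z_{n} \in W_{1} \cap O$ with $Z_{n} \to Z$; by linearity of $\varphi$ we have $\varphi(Z_{n}) \to \varphi(Z)$. Pick $g_{n} \in O(q,\br)$ with $g_{n} Z_{n} g_{n}^{-1} = \varphi(Z_{n})$, pass to a subsequence $g_{n} \to g \in O(q,\br)$, and take the limit to obtain $g Z g^{-1} = \varphi(Z)$. This exhibits the required element $g(Z) \in O(q,\br)$ for arbitrary $Z \in W_{1}$, completing the proof that $\varphi$ is a weak conjugacy. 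I do not anticipate any deep obstacle: the argument is essentially a concatenation of Proposition 5.1, Proposition 8.2, and the spectral classification of real skew symmetric matrices under orthogonal conjugation, with the only mildly delicate step being the passage from the generic set $W_{1} \cap O$ to all of $W_{1}$, which is handled cleanly by compactness of $O(q,\br)$.
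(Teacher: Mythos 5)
Your proof is correct and follows essentially the same route as the paper: use the hypothesis to transfer primitive conjugate values between $\gamma_{Z}$ and $\gamma_{\varphi(Z)}$, apply Proposition 8.2(4) and Proposition 5.1(3) to conclude that $Z$ and $\varphi(Z)$ have the same (simple) eigenvalues for all $Z$ in the dense open set $W_{1}\cap O$, and then pass from $W_{1}\cap O$ to all of $W_{1}$ by a limiting argument. The only genuine difference is in the final limiting step: the paper observes that $Z$ and $\varphi(Z)$ have the same characteristic polynomial on $W_{1}\cap O$, extends this identity by continuity of $Z\mapsto\det(tI-Z)$ to all of $W_{1}$, and then invokes the spectral theorem to produce $g(Z)$, whereas you pass directly to a convergent subsequence of the conjugating elements $g_{n}$ via compactness of $O(q,\br)$ and take the limit of the identity $g_{n}Z_{n}g_{n}^{-1}=\varphi(Z_{n})$. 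Both are clean; your version has the small advantage of never needing to re-solve for the conjugating element, while the paper's version isolates the invariant (the characteristic polynomial) that is actually being matched. One point worth stating explicitly, which you compress into ``the hypothesis forces $\gamma_{\varphi(Z)}$ to have the same conjugate locus, hence also $P$ distinct primitive conjugate values'': primitivity is a property of the set of conjugate values alone, so equality of conjugate loci does transfer primitive values in both directions; the paper spells out the short contradiction argument for this, and it is worth including.
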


\noindent $\mathbf{Remark}$  In the statement of the Proposition we do not assume that $\gamma_{Z}$ and $\gamma_{\varphi(Z)}$ have the same multiplicities for a fixed conjugate value of both geodesics.  The fact that these multiplicities are the same then follows immediately from  the conclusion of this Proposition and Corollary 4.5.
\newline

\begin{proof}  Let Z be an arbitrary element of $W_{1}$.  Let $t > 0$ be a primitive conjugate value for the geodesic $\gamma_{Z}$ in $N_{1}$.  We show first that $t > 0$ is also a primitive conjugate value for the geodesic $\gamma_{\varphi(Z)}$ in $N_{2}$.  If this were not the case, then we could write $t = mt_{0}$, where $m \geq 2$ is an integer and $t_{0}$ is a conjugate value of $\gamma_{\varphi(Z)}$ in $N_{2}$.  By hypothesis $t_{0}$ is also a conjugate value for $\gamma_{Z}$ in $N_{1}$, which contradicts the hypothesis that $t > 0$ is a primitive conjugate value for the geodesic $\gamma_{Z}$.
\newline  

\noindent  Now let O be the subset of those elements Z in $\fs \fo(q,\br)$ such that the eigenvalues of j(Z) are all distinct and the ratio of any two distinct eigenvalues is not an integer with absolute value $\geq 2$.  If $Z \in O \cap W_{1}$, then by 4) of Proposition 8.2 the geodesic $\gamma_{Z}$ in $N_{1}$ has P distinct primitive conjugate values $\{t_{1}, ... , t_{P} \}$.  By the hypothesis on $\varphi$,  the remarks above and 1) of Proposition 5.1 the geodesic $\gamma_{\varphi(Z)}$ in $N_{2}$ has the same primitive values $\{t_{1}, ... , t_{P} \}$.  It follows from Proposition 3) of Proposition 5.1 and Corollary 5.2 that Z and $\varphi(Z)$ have the same eigenvalues, all of multiplicity one.  In particular, Z and $\varphi(Z)$ have the same characteristic  polynomial for all $Z \in O \cap W_{1}$.
\newline

\noindent  Now let $Z \in W_{1}$ be given.  Since $W_{1} \in U$ it follows from 3) of Proposition 8.2 that $O \cap W_{1}$ is open and dense in $W_{1}$ in the vector space topology.  Hence there exists a sequence $\{Z_{k} \}$ in $O \cap W_{1}$ such that $Z_{k} \rightarrow Z$ as $k \rightarrow \infty$.  By the observations above $Z_{k}$ and $\varphi(Z_{k})$ have the same characteristic polynomial for all k.  Hence by continuity Z and $\varphi(Z)$ have the same characteristic polynomial.  It follows from the finite dimensional spectral theorem that Z and $\varphi(Z)$ are conjugate by some element g(Z) of $O(q,\br)$.
\end{proof}

\section{Appendix : Polynomial maps and the Zariski topology}

\noindent $\mathbf{Polynomial~ maps~ and~ Zariski~ open~ sets}$

\noindent Let V be a vector space over a field K, not necessarily finite dimensional, and let F(V) denote the K-algebra of functions from V to K.  Let K[V] denote the K-subalgebra of F(V) generated by V$^{*}$.  The elements of K[V] are the $\mathit{polynomial~functions}$ on V.
\newline

\noindent  A set $\Sigma \subset V$ is $\mathit{Zariski~closed}$ in V if there exist finitely many elements $g_{1}, ... , g_{m}$ of K[V] such that $\Sigma = \{v \in V : g_{i}(v) = 0~for~1 \leq i \leq m \}$.  A set $O \subset V$ is $\mathit{Zariski~open}$ in V if $V - O$ is Zariski closed in V.  If $K = \br$ or $\bc$, then V has a standard vector space topology, and Zariski open subsets are open and dense in V with respect to this topology. 
\newline

\noindent $\mathbf{Polynomial~maps~between~vector~spaces}$

\noindent  Let V,W be vector spaces over K, and let $g : V \rightarrow W$ be a function.  The function g is said to be a $\mathit{polynomial}$ function if $T \circ g \in K[V]$ for all $T \in K[W]$.
\newline

\begin{proposition}  A function $g : V \rightarrow W$ is a polynomial function $\Leftrightarrow T \circ g \in K[V]$ for all $T \in W^{*}$. 
\end{proposition}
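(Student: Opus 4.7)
The plan is to observe that the forward implication is immediate and that the reverse implication reduces to the fact that $K[W]$ is, by its very definition, the $K$-subalgebra of $F(W)$ generated by $W^*$, together with the fact that $K[V]$ is a $K$-subalgebra of $F(V)$ and that composition with $g$ is compatible with addition and multiplication of functions.

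First I would dispatch the $(\Rightarrow)$ direction in a single sentence: since $W^* \subseteq K[W]$, if $T \circ g \in K[V]$ for every $T \in K[W]$, then in particular $T \circ g \in K[V]$ for every $T \in W^*$.

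For the $(\Leftarrow)$ direction I would argue as follows. Assume $T \circ g \in K[V]$ for all $T \in W^*$. Let $\fA = \{T \in F(W) : T \circ g \in K[V]\}$. I would verify directly that $\fA$ is a $K$-subalgebra of $F(W)$: it contains all constant functions (since for $c \in K$, the constant map $c$ composes with $g$ to give the constant $c \in K[V]$), it is closed under addition because $(T_1 + T_2) \circ g = T_1 \circ g + T_2 \circ g$ and $K[V]$ is closed under sums, and it is closed under multiplication because $(T_1 T_2) \circ g = (T_1 \circ g)(T_2 \circ g)$ and $K[V]$ is closed under products. By hypothesis, $\fA$ contains $W^*$. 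Since $K[W]$ is the smallest $K$-subalgebra of $F(W)$ containing $W^*$, we conclude $K[W] \subseteq \fA$, i.e.\ $T \circ g \in K[V]$ for every $T \in K[W]$, which is the definition of $g$ being a polynomial map.

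There is no real obstacle here; the statement is essentially a universal property of the generated subalgebra $K[W]$, and the only thing to check carefully is that the set $\fA$ defined above really is a $K$-subalgebra of $F(W)$, which is a routine verification using the pointwise algebraic operations on $F(V)$ and $F(W)$ and the definition of $K[V]$ as a $K$-subalgebra of $F(V)$.
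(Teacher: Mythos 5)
Your proof is correct and follows essentially the same route as the paper: both arguments hinge on the observation that the set of functions $T$ with $T \circ g \in K[V]$ forms a $K$-subalgebra containing $W^*$, and then invoke the universal/generating property of $K[W]$. The only cosmetic difference is that you work inside $F(W)$ while the paper defines the analogous set $S$ directly inside $K[W]$; the substance is identical.
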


\begin{proof}  If  $g : V \rightarrow W$ is a polynomial function, then $T \circ g \in K[V]$ for all $T \in W^{*}$ since $W^{*} \subset K[W]$.  Conversely, suppose that $T \circ g \in K[V]$ for all $T \in W^{*}$.  Let $S = \{f \in K[W] : f \circ g \in K[V] \}$.   By hypothesis $S \supset W^{*}$, and it is easy to check that S is a K-subalgebra of K[W].  Hence S = K[W], which completes the proof. 
\end{proof}

\begin{corollary}  Let $g : V \rightarrow W$ be a linear map.  Then g is a polynomial map.
\end{corollary}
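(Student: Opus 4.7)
The plan is to deduce this immediately from Proposition 10.1, which reduces the task of verifying that $g$ is a polynomial map to checking that $T \circ g \in K[V]$ only for $T$ in the dual space $W^{*}$, rather than for all of $K[W]$.

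First I would fix an arbitrary $T \in W^{*}$ and consider the composition $T \circ g : V \to K$. Since both $g : V \to W$ and $T : W \to K$ are $K$-linear by hypothesis, the composition $T \circ g$ is a $K$-linear map from $V$ to $K$. Hence $T \circ g$ lies in $V^{*}$.

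By the definition of $K[V]$ as the $K$-subalgebra of $F(V)$ generated by $V^{*}$, we have $V^{*} \subset K[V]$, so $T \circ g \in K[V]$. Since this holds for every $T \in W^{*}$, Proposition 10.1 applies and yields that $g$ is a polynomial map.

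There is essentially no obstacle here: the content is entirely in Proposition 10.1, and once that reduction is available, the statement is just the observation that linearity is preserved under composition and that linear functionals are by definition polynomial functions of degree one. The only thing to be careful about is to invoke Proposition 10.1 rather than trying to verify the polynomial condition directly against the full algebra $K[W]$, which would require an argument about how polynomials in the coordinates of $W$ pull back under $g$.
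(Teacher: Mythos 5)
Your argument is exactly the paper's: for $T \in W^{*}$ the composition $T \circ g$ is linear, hence lies in $V^{*} \subset K[V]$, and Proposition 10.1 then gives the conclusion. The only difference is that you spell out the intermediate steps a bit more fully.
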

 
\begin{proof}  If $T \in W^{*}$, then $T \circ g \in V^{*} \subset K[V]$.  Now apply Proposition 10.1.
\end{proof}

\begin{proposition}  (Chain Rule)  Let V,W,Z be vector spaces over K.  Let $g : V \rightarrow W$ and $h : W \rightarrow Z$ be polynomial maps.  Then $h \circ g : V \rightarrow Z$ is a polynomial map.
\end{proposition}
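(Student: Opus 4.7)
The plan is to reduce the statement to the simpler criterion already established in Proposition 10.1, namely that a map into $W$ (resp.\ $Z$) is polynomial provided that composition with every element of the dual space lands in the polynomial algebra of the source. This turns the chain rule into a two-step composition that uses the full definition once and the dual-space criterion once.

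First I would fix an arbitrary $T \in Z^{*}$ and consider $T \circ (h \circ g) = (T \circ h) \circ g$. Since $h : W \to Z$ is polynomial and $T \in Z^{*} \subset K[Z]$, the definition of a polynomial map gives $T \circ h \in K[W]$. Next I would apply the hypothesis that $g : V \to W$ is polynomial: this tells us that $S \circ g \in K[V]$ for every $S \in K[W]$, and in particular for $S = T \circ h$. Hence $(T \circ h) \circ g \in K[V]$, i.e.\ $T \circ (h \circ g) \in K[V]$ for every $T \in Z^{*}$. Proposition 10.1 then upgrades this dual-space condition to the full polynomial condition, yielding that $h \circ g$ is a polynomial map.

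There is really no obstacle to this argument; the whole point of Proposition 10.1 was to set it up so that the chain rule would be immediate. The only thing to be careful about is the order in which the two polynomial hypotheses are invoked: one must first use that $h$ is polynomial (against a linear functional on $Z$) to produce an element of $K[W]$, and only then use that $g$ is polynomial (against this newly produced element, not merely against a linear functional on $W$). This is exactly why the definition of polynomial map had to be stated in the strong form ``$T \circ g \in K[V]$ for all $T \in K[W]$'' rather than only for $T \in W^{*}$ — the dual-space version alone is not closed under composition a priori, and Proposition 10.1 is precisely what bridges the two formulations.
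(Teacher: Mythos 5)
Your argument is correct and is essentially the paper's proof: both hinge on the identity $T \circ (h\circ g) = (T\circ h)\circ g$, first using that $h$ is polynomial to land $T\circ h$ in $K[W]$ and then using that $g$ is polynomial against that element of $K[W]$. The only difference is that the paper takes $T$ ranging over all of $K[Z]$ from the start and so verifies the definition directly, whereas you restrict to $T \in Z^{*}$ and invoke Proposition~10.1 at the end --- a harmless but unnecessary detour, since nothing in the argument is made easier by assuming $T$ linear.
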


\begin{proof}  If $T \in K[Z]$, then $T \circ h \in K[W]$ since h is a polynomial map, and it follows that $T \circ (h \circ g) = (T \circ h) \circ g \in K[V]$ since g is a polynomial map.
\end{proof}

\begin{proposition}  Let V be a vector space over K, and let W be a subspace of V.  Let $i : W \rightarrow V$ be the inclusion map.  Then i is continuous with respect to the Zariski topologies on V and W.
\end{proposition}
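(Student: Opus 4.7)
The plan is to verify continuity of $i$ directly from the definition of the Zariski topology, namely to show that $i^{-1}(\Sigma)$ is Zariski closed in $W$ for every Zariski closed subset $\Sigma$ of $V$. This reduces, via the machinery already developed in the appendix, to a one-step application of the Chain Rule.

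First I would fix an arbitrary Zariski closed set $\Sigma \subset V$ and use the definition to write $\Sigma = \{v \in V : g_{j}(v) = 0 \text{ for } 1 \leq j \leq m\}$ for some finite collection $g_{1}, \ldots, g_{m} \in K[V]$. Unpacking the preimage along $i$ gives $i^{-1}(\Sigma) = \{w \in W : (g_{j} \circ i)(w) = 0 \text{ for } 1 \leq j \leq m\}$. So it suffices to show that each composition $g_{j} \circ i$ lies in $K[W]$; once this is established, $i^{-1}(\Sigma)$ is the common zero set in $W$ of finitely many elements of $K[W]$ and hence is Zariski closed in $W$ by definition.

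Next I would observe that the inclusion $i : W \to V$ is a linear map and hence a polynomial map by Corollary 10.2. Since each $g_{j}$ lies in $K[V]$ and $i$ is polynomial, the Chain Rule (Proposition 10.3) immediately yields $g_{j} \circ i \in K[W]$ for $1 \leq j \leq m$. This gives the required conclusion and completes the proof.

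There is no substantive obstacle here: the essential work was carried out in Corollary 10.2 and Proposition 10.3, and the statement amounts to the observation that pulling back defining polynomials along a polynomial map produces defining polynomials on the source. The only thing one might wish to verify in passing is that the preimage formula $i^{-1}(\Sigma) = \{w \in W : g_{j}(w) = 0, 1 \leq j \leq m\}$ really does exhibit $i^{-1}(\Sigma)$ as a Zariski closed subset of $W$, which is immediate from the definition since the restrictions $g_{j}|_{W} = g_{j} \circ i$ are now confirmed to be elements of $K[W]$.
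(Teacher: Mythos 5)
Your proof is correct and follows essentially the same route as the paper: both arguments pull back the defining polynomials of a Zariski closed set along the inclusion $i$, invoke Corollary 10.2 to see that $i$ is a polynomial map, and then apply the Chain Rule (Proposition 10.3) to conclude that each restriction $g_{j}\circ i$ lies in $K[W]$. No meaningful differences.
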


\begin{proof}  Let A be a  subset of V that is closed in the Zariski topology of V.  It suffices to show that $i^{-1}(A) = A \cap W$ is closed in the Zariski topology of W.  Let $p_{1}, ... , p_{m} \in K[V]$ be polynomials such that $A = \{v \in V : p_{k}(v) = 0~for~1 \leq k \leq m \}$.  If $q_{k}$ is the restriction of $p_{k}$ to W, then $q_{k} = p_{k} \circ i$ for $1 \leq k \leq m$.  It follows from Corollary 10.2 and Proposition 10.3 that $q_{k} \in K[W]$ for $1 \leq k \leq m$.  It follows from the definitions that $A \cap W = \{w \in W : q_{k}(v) = 0~for~1 \leq k \leq m \}$, which completes the proof.
\end{proof} 

\noindent $\mathbf{Restrictions~ of~ polynomial~ maps}$

\begin{proposition}  Let V,W be vector spaces over K, and let U be a subspace of V.  Let $g : V \rightarrow W$ be a polynomial map, and let $g_{U} : U \rightarrow W$ denote the restriction of g to U.  Then $g_{U}$ is a polynomial map.
\end{proposition}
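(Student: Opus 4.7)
The plan is to realize $g_U$ as the composition of the inclusion map with $g$ and then invoke the previously established chain rule.

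First I would observe that if $i : U \to V$ denotes the inclusion map, then by definition $g_U = g \circ i$. Since $i$ is linear, Corollary 10.2 tells us that $i$ is a polynomial map. Since $g$ is a polynomial map by hypothesis, the chain rule (Proposition 10.3) immediately yields that $g_U = g \circ i : U \to W$ is a polynomial map.

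Alternatively, one can argue directly via the criterion in Proposition 10.1: for every $T \in W^*$ one has $T \circ g_U = (T \circ g) \circ i$, where $T \circ g \in K[V]$ because $g$ is polynomial. Hence it suffices to verify that the restriction to $U$ of any element of $K[V]$ lies in $K[U]$, which follows from the fact that restriction $V^* \to U^*$ sends $V^* \subset K[V]$ into $U^* \subset K[U]$ and that restriction is a $K$-algebra homomorphism $K[V] \to K[U]$. There is no genuine obstacle here; the only point to be careful about is that the chain rule from Proposition 10.3 is invoked exactly as stated, with the intermediate space being $V$ and not a proper subspace.
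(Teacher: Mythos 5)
Your proof is correct and follows exactly the paper's argument: write $g_U = g \circ i$ with $i : U \to V$ the inclusion, note $i$ is polynomial by Corollary 10.2, and apply the chain rule (Proposition 10.3). The alternative argument you sketch is a sound unwinding of the same idea but is not needed.
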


\begin{proof}  Let $i : U \rightarrow V$ denote the inclusion map.  Then i is a linear map and hence a polynomial map by Corollary 10.2.  Hence $g_{U} = g \circ i$ is a polynomial map by the chain rule.
\end{proof}

\begin{proposition}  Let V,W be vector spaces over K, and let W have a basis. Let $g : V \rightarrow W$ be a polynomial map, and let U be a subspace of W such that $g(V) \subset U$.  Then $g : V \rightarrow U$ is a polynomial map.
\end{proposition}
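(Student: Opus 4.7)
The plan is to reduce the problem to Proposition 10.1 applied to the map $g : V \to U$. By that proposition, it suffices to show that $T \circ g \in K[V]$ for every $T \in U^{*}$. So the task reduces to producing, for each $T \in U^{*}$, a polynomial representation of $T \circ g$ built from the known polynomial structure of $g : V \to W$.

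The key step is an extension lemma: every linear functional $T : U \to K$ extends to a linear functional $\widetilde{T} : W \to K$. This is exactly where the hypothesis that $W$ has a basis is used. I would pick a basis $\mathcal{B}_{U}$ of $U$, extend it to a basis $\mathcal{B}_{W}$ of $W$, define $\widetilde{T}$ on $\mathcal{B}_{W}$ to agree with $T$ on $\mathcal{B}_{U}$ and to vanish on $\mathcal{B}_{W} \setminus \mathcal{B}_{U}$, and then extend $\widetilde{T}$ by linearity. By construction $\widetilde{T}|_{U} = T$, so $\widetilde{T} \in W^{*}$.

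Now since $g(V) \subset U$, for every $v \in V$ we have $g(v) \in U$ and therefore $T(g(v)) = \widetilde{T}(g(v))$. That is, $T \circ g = \widetilde{T} \circ g$ as functions $V \to K$. Because $g : V \to W$ is a polynomial map and $\widetilde{T} \in W^{*} \subset K[W]$, the definition of polynomial map (or Proposition 10.1 applied to $g : V \to W$) gives $\widetilde{T} \circ g \in K[V]$. Hence $T \circ g \in K[V]$, and Proposition 10.1 applied to $g : V \to U$ yields that $g : V \to U$ is a polynomial map.

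The only nontrivial step is the extension of $T$ to $\widetilde{T}$, and that is entirely a linear-algebra matter handled by the existence of a basis of $W$ (which lets us exhibit a complement of $U$ in $W$). Once that extension is in hand, the rest is a direct appeal to Proposition 10.1 and the polynomiality of $g : V \to W$, so I do not expect any serious obstacle beyond verifying that the chosen basis extension is well defined.
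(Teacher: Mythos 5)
Your proposal is correct and follows essentially the same route as the paper: extend a basis of $U$ to a basis of $W$, use the resulting complement to extend each $T \in U^{*}$ to some $\widetilde{T} \in W^{*}$, observe $T \circ g = \widetilde{T} \circ g$ because $g(V) \subset U$, and invoke Proposition 10.1 twice. No meaningful differences from the paper's argument.
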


\begin{proof}  It is known that there exists a basis $\fB$ of W that contains a basis $\fB'$ of U.  See for example the discussion in chapter IX of [J].  Let $U' = K$- span $\{\fB - \fB' \}$.  Then $U \oplus U' = W$.  If $T \in U^{*}$, then one can extend T to an element $\tilde{T}$ of $W^{*}$ ; for example, define $\tilde{T} = T$ on U and $\tilde{T} = 0$ on $U'$.  
\newline

\noindent  Now let $T \in U^{*}$ be given and choose an element $\tilde{T} \in W^{*}$ such that $\tilde{T} = T$ on U.  Then $T \circ g = \tilde{T} \circ g \in K[V]$ since $g : V \rightarrow W$ is a polynomial map.  Hence $g : V \rightarrow U$ is a polynomial map by Proposition 10.1.
\end{proof}

\begin{proposition}  Let V,W be vector spaces over K, and let W have a basis.  Let K[V,W] denote the K-vector space of polynomial maps from V to W.  If W is a K-algebra, then K[V,W] is a K-algebra.
\end{proposition}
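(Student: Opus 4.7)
The $K$-vector space structure on $K[V,W]$ is immediate from the definition (a $K$-linear combination of polynomial maps is polynomial), so the content of the proposition is closure of $K[V,W]$ under the pointwise product $(fg)(v) := f(v) g(v)$ (product taken in $W$). Once that is established, associativity, distributivity, and unitality (if $W$ is unital) in $K[V,W]$ pass pointwise from the corresponding structure on $W$.

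By Proposition 10.1 it suffices to show $T \circ (fg) \in K[V]$ for every $T \in W^*$. Fix such a $T$ and choose a basis $\{w_i\}_{i \in I}$ of $W$, which exists by hypothesis, with structure constants $w_i w_j = \sum_k c_{ij}^k w_k$ and corresponding coordinate functionals $\alpha_i \in W^*$. Since $f$ and $g$ are polynomial, the functions $f_i := \alpha_i \circ f$ and $g_j := \alpha_j \circ g$ all lie in $K[V]$. For each $v$ the finite expansions $f(v) = \sum_i f_i(v) w_i$ and $g(v) = \sum_j g_j(v) w_j$ yield
$$T(f(v) g(v)) = \sum_{i,j} T(w_i w_j)\, f_i(v) g_j(v).$$
In the finite-dimensional case---which is the only case used in the body of the paper, where the relevant $W$ is $\fs\fo(q,\br)$---this is a finite $K$-linear combination of products of elements of $K[V]$, hence an element of $K[V]$. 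This is exactly what was required.

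The step I expect to be delicate, if one wants full generality, is making this calculation rigorous when $W$ is infinite-dimensional: the displayed sum is finite for each individual $v$, but a priori may contain infinitely many summands that are not identically zero as functions of $v$. A natural remedy is to verify polynomiality coordinate-by-coordinate, writing $\alpha_k \circ (fg) = \sum_{i,j} c_{ij}^k f_i g_j$ and arguing from the polynomiality of $f$ and $g$ that only finitely many of these terms can fail to vanish identically; one would then recover the general case using Proposition 10.6 to pass from the subspace of $W$ spanned by the relevant $w_k$ back to $W$. Since the applications in this paper only require the finite-dimensional setting, this technicality is not a genuine obstacle.
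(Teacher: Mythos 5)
Your argument is essentially the paper's: the paper first establishes Lemma 10.8, giving the coordinate expansion $g = \sum_{\alpha} g_\alpha w_\alpha$ with $g_\alpha = w_\alpha^* \circ g$ and the equivalence that $g$ is polynomial iff every $g_\alpha$ is, and then multiplies pointwise using structure constants exactly as you do, concluding $w_\gamma^* \circ (g\cdot h) = \sum_{\alpha,\beta} g_\alpha h_\beta A_{\alpha\beta}^{\gamma} \in K[V]$. The infinite-dimensional subtlety you flag (whether that double sum has only finitely many nonvanishing terms when $W$ has an infinite basis) is genuine, but the paper's own proof passes over it silently as well; in the paper's actual applications (with $K=\br$ and $W$ a polynomial ring) it causes no difficulty, so your proposal is as rigorous as the original.
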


\begin{proof} Let W be a K-algebra, and let $g,h : V \rightarrow W$ be polynomial maps.  Define $g \cdot h : V \rightarrow W$ by $(g \cdot h)(v) = g(v) \cdot h(v)$.  It remains to show that $g \cdot h \in K[V,W]$. 
\newline

\begin{lemma}  Let V,W be vector spaces over K, and let W have a basis $\fB = \{w_{\alpha} : \alpha \in A \}$.  Let $\fB^{*} = \{w_{\alpha}^{*} : \alpha \in A \}$ be the dual basis of $W^{*}$.  Let $g : V \rightarrow W$ be any function and define $g_{\alpha} = w_{\alpha}^{*} \circ g : V \rightarrow K$ for every $\alpha \in A$.  Then

	1)  $g = \sum_{\alpha \in A} g_{\alpha}~w_{\alpha}$, where the sum is finite when evaluated on each v of V.
	
	2)  g is a polynomial map $\Leftrightarrow g_{\alpha} : V \rightarrow K$ is a polynomial map for each $\alpha \in A$.
\end{lemma}

\begin{proof} 1)  If $v \in V$, then $g(v) = \sum_{\alpha \in A'} a_{\alpha} w_{\alpha}$ for some finite subset set $A' $ of $A$. Then $g_{\beta}(v) = a_{\beta}$ for every $\beta \in A'$ and $g_{\beta}(v) = 0$ for $\beta \in A - A'$. This proves 1). 
\newline

\noindent We prove 2).  Clearly, if $g : V \rightarrow W$ is a polynomial map, then $g_{\alpha} = w_{\alpha}^{*} \circ g : V \rightarrow K$ is a polynomial map for each $\alpha \in A$ since $w_{\alpha}^{*} \in W^{*}$.  Conversely, suppose that $g_{\alpha} = w_{\alpha}^{*} \circ g : V \rightarrow K$ is a polynomial map for every $\alpha \in A$.  Let $w^{*} \in W^{*}$ be given and write $w^{*} = \sum_{\alpha \in A'} a_{\alpha} w_{\alpha}^{*}$, where $A'$ is a finite subset of A, and $a_{\alpha} \in K$ for every $\alpha \in A$. Then $w^{*} \circ g = \sum_{\alpha \in A'} a_{\alpha} (w_{\alpha}^{*} \circ g) = \sum_{\alpha \in A'} a_{\alpha} g_{\alpha} \in K[V]$ since each $g_{\alpha}$ lies in K[V].  Now apply Proposition 10.1 to complete the  proof of 2).
\end{proof}

\noindent We now complete the proof of the Proposition. Let $\fB = \{w_{\alpha} : \alpha \in A\}$ be a basis for W, and let $\fB^{*} = \{w_{\alpha}^{*} : \alpha \in A \}$ be the corresponding dual basis for $W^{*}$.  For $\alpha, \beta \in A$ write $w_{\alpha} \cdot w_{\beta} = \sum_{\gamma \in A'} A_{\alpha \beta}^{\gamma} w_{\gamma}$, where $A'$ is a finite subset of A and $A_{\alpha \beta}^{\gamma} \in K$ for all $\alpha, \beta, \gamma$.   Let $g_{\alpha} = w_{\alpha}^{*} \circ g$ and $h_{\beta} = w_{\beta}^{*} \circ h$ for each $\alpha, \beta \in A$.  Then $g_{\alpha} \in K[V]$ and $h _{\beta} \in KV]$ for all $\alpha, \beta \in A$.
\newline

\noindent Given $v \in V$, Lemma 10.8 shows that $(g \cdot h)(v) = g(v) \cdot h(v) = (\sum_{\alpha \in A} g_{\alpha}(v) w_{\alpha}) \cdot (\sum_{\beta \in A} h_{\beta}(v) w_{\beta}) = \sum_{\alpha, \beta \in A} g_{\alpha}(v) h_{\beta}(v) w_{\alpha} \cdot w_{\beta} = \sum_{\alpha, \beta \in A} g_{\alpha}(v) h_{\beta}(v) (\sum_{\gamma \in A'} A_{\alpha \beta}^{\gamma} w_{\gamma}) = \sum_{\gamma \in A'}(\sum_{\alpha, \beta \in A} g_{\alpha}(v) h_{\beta}(v) A_{\alpha \beta}^{\gamma}) w_{\gamma}$.  Hence $w_{\gamma}^{*} \circ (g \cdot h) = \sum_{\alpha, \beta \in A} g_{\alpha} h_{\beta} A_{\alpha \beta}^{\gamma} \in K[V]$ for all $\gamma \in A$.  It follows as in the proof of 2) of Lemma 10.8 that $w^{*} \circ (g \cdot h) \in K[V]$ for every $w^{*} \in W^{*}$.  Hence $g \cdot h \in K[V]$ by Proposition 10.1.
\end{proof}

\noindent $\mathbf{Applications}$

\begin{proposition}  Let n be a positive integer, and let M(n,K) denote the K-vector space of n x n matrices with entries in K.  Let V be a subspace of M(n,K).  Define $\varphi : V \rightarrow K[t]$ by $\varphi(Z) = det(tI - Z)$ for all $Z \in V$.  Then  $\varphi : V \rightarrow K[t]$ is a polynomial map.
\end{proposition}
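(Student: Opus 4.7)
The plan is to reduce to scalar polynomial functions via Lemma 10.8 and then use the classical fact that the coefficients of the characteristic polynomial are polynomials in the matrix entries.

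First I would recall that $K[t]$ is a $K$-vector space with basis $\fB = \{t^k : k \geq 0\}$ and dual basis $\fB^{*} = \{(t^k)^{*} : k \geq 0\}$. By Lemma 10.8 applied to $W = K[t]$, the map $\varphi : V \rightarrow K[t]$ is a polynomial map if and only if each coordinate function $\varphi_{k} = (t^{k})^{*} \circ \varphi : V \rightarrow K$ is a polynomial map for every $k \geq 0$. Note that $\varphi(Z) = \det(tI - Z)$ has degree $n$ in $t$ with leading coefficient $1$, so $\varphi_{k} \equiv 0$ for $k > n$ and $\varphi_{n} \equiv 1$; these are trivially polynomial. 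It remains to handle $\varphi_{k}$ for $0 \leq k \leq n-1$.

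Next I would express each $\varphi_{k}$ as a polynomial in the entries of $Z$. For $Z = (z_{ij}) \in M(n,K)$, the Leibniz expansion gives
\[
\det(tI - Z) = \sum_{\sigma \in S_{n}} \operatorname{sgn}(\sigma) \prod_{i=1}^{n} (t \delta_{i \sigma(i)} - z_{i \sigma(i)}),
\]
from which one sees that $\varphi_{k}(Z)$ is a fixed polynomial $P_{k}$ in the entries $z_{ij}$ of $Z$ (in fact, up to sign, a sum of principal $(n-k) \times (n-k)$ minors). The coordinate functions $e_{ij} : M(n,K) \rightarrow K$ sending $Z \mapsto z_{ij}$ lie in $M(n,K)^{*}$ and hence in $K[M(n,K)]$. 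Restricting to the subspace $V$, Proposition 10.5 shows that $e_{ij}|_{V} \in K[V]$. Since $K[V]$ is a $K$-algebra by definition, any polynomial expression in the $e_{ij}|_{V}$ lies in $K[V]$; in particular $\varphi_{k} = P_{k}(e_{11}|_{V}, \ldots , e_{nn}|_{V}) \in K[V]$ for every $0 \leq k \leq n-1$.

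Having shown $\varphi_{k} \in K[V]$ for all $k \geq 0$, Lemma 10.8 immediately yields that $\varphi : V \rightarrow K[t]$ is a polynomial map. I do not expect a genuine obstacle here: the one mild subtlety is that the target $K[t]$ is infinite dimensional, which is why the argument must pass through Lemma 10.8 and its dual-basis coordinate description rather than through any naive finite-dimensional version; otherwise the proof is essentially the standard observation that characteristic polynomial coefficients are polynomial in matrix entries, transported from $M(n,K)$ to $V$ by Proposition 10.5.
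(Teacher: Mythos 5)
Your proposal is correct and follows essentially the same route as the paper: reduce to scalar coordinate functions via the dual basis of $K[t]$ (Lemma 10.8 / Proposition 10.1), observe that the coefficients of $\det(tI - Z)$ are universal polynomials in the matrix entries, and transport from $M(n,K)$ to $V$ using Proposition 10.5. The only cosmetic difference is the order of the two reductions: the paper first replaces $\varphi$ on $V$ by $\tilde{\varphi}$ on all of $M(n,K)$ via Proposition 10.5 and then passes to coordinates, whereas you pass to coordinates on $V$ first and then restrict the coordinate functionals $e_{ij}$ to $V$; both orderings are equally valid.
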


\begin{proof}  Define $\tilde{\varphi} : M(n,K) \rightarrow K[t]$ by $\tilde{\varphi}(Z) = det(tI - Z)$ for all $Z \in M(n,K)$.  Clearly $\tilde{\varphi} = \varphi$ on V.  By Proposition 10.5 it suffices to prove that $\tilde{\varphi} : M(n,K) \rightarrow K[t]$ is a polynomial map.
\newline

\noindent  Let $\{E_{ij} : 1 \leq i,j \leq n \}$ be the natural basis for M(n,K), and let $\{E_{ij}^{*} : 1 \leq i,j \leq n \}$ be the dual basis for M(n,K)$^{*}$.  The K-algebra K[t] has a natural basis $\fB = \{1,t,t^{2}, ... , t^{m}, ... \}$.  Let $\fB^* = \{T_{0}, T_{1}, T_{2}, ... , T_{m}, ...\}$ be the dual basis for K[t]$^{*}$.  For a positive integer r let $\tilde{\varphi_{r}} = T_{r} \circ \tilde{\varphi}$.  By Proposition 10.1 and the proof of 2) of Lemma 10.8 it suffices to show $\tilde{\varphi_{r}} : M(n,K) \rightarrow K$ is a polynomial map for every integer $r \geq 0$.
\newline

\noindent Let Z $\in$ M(n,K) be given.  Then $\tilde{\varphi}(Z) = det (tI - Z) = \sum_{k=0}^{n} t^{k} P_{k}(Z_{ij})$, where $Z_{ij} = E_{ij}^{*}(Z)$ for $1 \leq i,j \leq n$ and $P_{k}$ is a polynomial in the $n^{2}$ entries $\{Z_{ij} \}$.  It follows that $\tilde{\varphi_{r}}(Z) = P_{r}(Z_{ij}) = (P_{r}(E_{ij}^{*}))$(Z) for every integer $r \geq 0$.  Hence $\tilde{\varphi_{r}} = (P_{r}(E_{ij}^{*}))$, a polynomial in $\{E_{ij}^{*} \}$, and we conclude that $\tilde{\varphi_{r}} : M(n,K) \rightarrow K$ is a polynomial map for every integer $r \geq 0$.
\end{proof}

\begin{proposition}  Let $q = 2k$ and let $V = \fs \fo(q,\br) \subset M(q,\br)$.  Define $\varphi : V \rightarrow \br[t]$ by $\varphi(Z) = det (tI - Z)$.  For an integer m with $|m| \geq 2$ define $\varphi_{m} : V \rightarrow \br[t]$ and $\psi_{m} : V \rightarrow \br[t]$ by $\varphi_{m}(Z) = det (tI - mZ)$ and $\psi_{m}(Z) = \varphi(Z) \cdot \varphi_{m}(Z)$.  Then $\psi_{m}$ is a polynomial map for every m.
\end{proposition}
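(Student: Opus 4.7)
The plan is to derive $\psi_m \in \br[V, \br[t]]$ by assembling pieces already in place in the appendix. Specifically, I will first show that both factors $\varphi$ and $\varphi_m$ are polynomial maps from $V$ into $\br[t]$, and then invoke Proposition 10.9 to conclude that their pointwise product $\psi_m = \varphi \cdot \varphi_m$ is a polynomial map.

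For $\varphi$, Proposition 10.10 applied with $K = \br$ and with the subspace $V = \fs \fo(q,\br) \subset M(q,\br)$ gives directly that $\varphi(Z) = \det(tI - Z)$ is a polynomial map $V \rightarrow \br[t]$. For $\varphi_m$, I would factor it through scalar multiplication. Let $L_m : V \rightarrow V$ be defined by $L_m(Z) = mZ$. Since $L_m$ is linear, it is a polynomial map by Corollary 10.2. Noting that $\varphi_m(Z) = \det(tI - mZ) = \varphi(L_m(Z))$, the chain rule (Proposition 10.3) yields that $\varphi_m = \varphi \circ L_m$ is a polynomial map from $V$ to $\br[t]$.

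The final step is to pass from these two polynomial maps to their product. Here Proposition 10.9 applies with $W = \br[t]$, which is an $\br$-algebra admitting the monomial basis $\{1, t, t^2, \ldots \}$; hence $\br[V, \br[t]]$ is itself an $\br$-algebra under pointwise multiplication, which is exactly the operation used to define $\psi_m$. Therefore $\psi_m \in \br[V, \br[t]]$, completing the argument.

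The proof is essentially bookkeeping over the machinery built up in this appendix, so there is no real obstacle to overcome. The only points requiring any care are verifying that $\br[t]$ genuinely admits a basis so that Proposition 10.9 is applicable, and checking that the algebra structure on $\br[V, \br[t]]$ coming from Proposition 10.9 is the pointwise product that matches the definition of $\psi_m$; both are immediate from the constructions.
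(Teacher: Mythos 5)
Your proof is correct and takes essentially the same route as the paper, which simply asserts the result follows immediately from Propositions 10.7 (the space of polynomial maps into a $K$-algebra with a basis is itself a $K$-algebra under pointwise product) and 10.9 ($Z \mapsto \det(tI - Z)$ is a polynomial map on any subspace of $M(n,K)$); you have usefully made explicit the small intermediate step that $\varphi_m = \varphi \circ L_m$ is polynomial via Corollary 10.2 and the chain rule. One caution for matching the paper's numbering: the determinant result you invoke is Proposition 10.9 (not 10.10, which is the statement under proof), and the algebra-of-polynomial-maps result is Proposition 10.7 (not 10.9).
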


\begin{proof}  This follows immediately from Propositions 10.7 and 10.9.
\end{proof}

\begin{proposition} Let $q = 2k + 1$ and let $V = \fs \fo(q,\br) \subset M(q,\br)$.   Let $W = (t^{2})$, the ideal in $\br[t]$ generated by the polynomial $t^{2}$.  For an integer m with $|m| \geq 2$ define $\psi_{m} : V \rightarrow W$ by $\psi_{m}(Z) = det (tI - Z) \cdot det (tI - mZ)$.  Then $\psi_{m}$ is a polynomial map.
\end{proposition}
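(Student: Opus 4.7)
The plan is to adapt the proof of Proposition 10.10, with one additional observation forced by the fact that here the codomain is the proper subspace $W = (t^{2}) \subset \br[t]$ rather than $\br[t]$ itself. So first I would verify that $\psi_{m}$ actually takes values in $W$, and only then invoke the machinery of the appendix to refine the codomain.

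The new ingredient, specific to odd $q$, is the classical observation that every $Z \in \fs \fo(q,\br)$ with $q$ odd has $\det Z = 0$. Indeed, $\det Z = \det Z^{T} = \det(-Z) = (-1)^{q} \det Z = -\det Z$, which forces $\det Z = 0$. Evaluating $\det(tI - Z)$ at $t = 0$ therefore yields $(-1)^{q}\det Z = 0$, so $t$ divides $\det(tI - Z)$ in $\br[t]$. The same argument applied to $mZ$, which is again skew-symmetric, shows that $t$ divides $\det(tI - mZ)$. Hence $t^{2} \mid \psi_{m}(Z)$ for every $Z \in V$, i.e. $\psi_{m}(V) \subseteq W$.

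Having established that $\psi_{m}$ lands in $W$, the remainder runs parallel to the proof of Proposition 10.10. By Proposition 10.9 the map $\varphi(Z) = \det(tI - Z)$ is a polynomial map $V \to \br[t]$. The scaling $Z \mapsto mZ$ is linear, hence polynomial by Corollary 10.2, so $\varphi_{m}(Z) = \det(tI - mZ) = \varphi(mZ)$ is a polynomial map by the chain rule (Proposition 10.3). Since $\br[t]$ is a $\br$-algebra with a basis, Proposition 10.7 shows that the product $\psi_{m} = \varphi \cdot \varphi_{m} : V \to \br[t]$ is a polynomial map. Finally, the containment $\psi_{m}(V) \subseteq W$ established above, combined with Proposition 10.6 (applied with enclosing space $\br[t]$, which has a basis), upgrades this to a polynomial map $\psi_{m} : V \to W$.

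The only non-routine step is the divisibility observation in the second paragraph; everything else is bookkeeping using results already built in the appendix. I do not anticipate any real obstacle.
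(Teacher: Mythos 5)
Your proof is correct and follows essentially the same route as the paper's: note that $t=0$ is a root of both $\det(tI-Z)$ and $\det(tI-mZ)$ because a skew-symmetric matrix of odd size has zero determinant, conclude $\psi_m(V)\subset (t^2)$, then combine Propositions 10.9 and 10.7 to get a polynomial map into $\br[t]$ and Proposition 10.6 to restrict the codomain. You merely spell out a few steps the paper leaves implicit (the $\det Z = -\det Z$ computation and the use of the chain rule and Corollary 10.2 for $Z\mapsto mZ$), which is fine.
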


\begin{proof}  Fix $Z \in V$.  Note that both $\varphi(t) = det (tI - Z)$ and $\varphi_{m}(t) = det (tI - mZ), m \geq 2$, have $t=0$ as a root since q is odd and both Z and mZ are skew symmetric.  Hence $\psi_{m}(V) \subset W$. From Propositions 10.7 and 10.9  it follows that $\psi_{m} : V \rightarrow \br[t]$ is a polynomial map.  By Proposition 10.6 it follows that  $\psi_{m} : V \rightarrow W$ is a polynomial map for every integer m with $|m| \geq 2$.
\end{proof}

\begin{corollary} Let $q = 2k + 1$ and let $V = \fs \fo(q,\br) \subset M(q,\br)$.  For an integer m with $|m| \geq 2$ define $\tilde{\psi_{m}} : V \rightarrow \br[t]$ by $\tilde{\psi_{m}}(Z) = det (tI - Z) \cdot det (tI - mZ) / t^{2}$.  Then $\psi_{m}$ is a polynomial map.
\end{corollary}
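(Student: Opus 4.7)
The plan is to recognize $\tilde{\psi}_m$ as the composition of the map $\psi_m$ from Proposition 10.11 with division by $t^2$, and then invoke the chain rule for polynomial maps.

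First I would introduce the ``division by $t^2$'' map. Define $D : W \to \br[t]$ on the ideal $W = (t^2) \subset \br[t]$ by $D(f) = f / t^2$; this makes sense precisely because every element of $W$ is divisible by $t^2$. The map $D$ is just the inverse of the $\br$-linear map $\br[t] \to W$ given by multiplication by $t^2$ (which is an $\br$-linear bijection from $\br[t]$ onto $W$), so $D$ is itself $\br$-linear. By Corollary 10.2 every linear map between $\br$-vector spaces is a polynomial map, so $D : W \to \br[t]$ is a polynomial map.

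Next, Proposition 10.11 provides $\psi_m : V \to W$ as a polynomial map. Since $\tilde{\psi}_m(Z) = \det(tI - Z) \cdot \det(tI - mZ) / t^2 = D(\psi_m(Z))$ by construction, we have the factorization $\tilde{\psi}_m = D \circ \psi_m$. Applying the chain rule (Proposition 10.3) to the composition of the polynomial maps $\psi_m : V \to W$ and $D : W \to \br[t]$ yields that $\tilde{\psi}_m : V \to \br[t]$ is a polynomial map, as desired.

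There is essentially no obstacle here beyond verifying that $D$ is well-defined and linear on the ideal $W$; the content of the corollary is really already contained in Proposition 10.11, and the present statement just removes the harmless $t^2$ factor by composing with a linear map. (One small bookkeeping point worth noting in the write-up is that $W = (t^2)$ inherits its $\br$-vector space structure from $\br[t]$, so the basis hypothesis needed in earlier propositions poses no issue: both $W$ and $\br[t]$ have obvious countable bases of monomials.)
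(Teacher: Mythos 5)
Your proof is correct and matches the paper's argument essentially verbatim: both factor $\tilde{\psi}_m$ as the division-by-$t^2$ map (your $D$, the paper's $\pi$) composed with the $\psi_m$ of Proposition 10.11, observe that the division map is linear hence polynomial, and conclude via the chain rule.
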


\begin{proof}  Let $W = (t^{2})$, the ideal in $\br[t]$ generated by the polynomial $t^{2}$.  Let $\psi_{m} : V \rightarrow W$ be the polynomial map defined in Proposition 10.11.  Let $\pi : W \rightarrow \br[t]$ be the linear map given by $\pi(f) = f / t^{2}$.  Then $\tilde{\psi_{m}} = \pi \circ \psi_{m}$ for all $m \geq 2$.  The map $\pi : W \rightarrow \br[t]$ is polynomial since it is linear, and it now follows from the chain rule that $\tilde{\psi_{m}}$ is a polynomial map for all integers m with $|m| \geq 2$.
\end{proof}

\begin{proposition}  Let m,q be integers with $q \geq 2$ and $|m| \geq 2$.  Let $A_{m}$ be the set of those elements in $\fs \fo(q,\br)$ such that 

	1)  The eigenvalues of Z are distinct in $\bc$.
	
	2)  The ratio of any two distinct nonzero eigenvalues of Z is not equal to m.
		
\noindent  Then $A_{m}$ is a nonempty Zariski open subset of $\fs \fo(q,\br)$ 
\end{proposition}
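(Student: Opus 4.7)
The plan is to exhibit $A_m$ as the complement of the zero set of a single polynomial function $D_m\in\br[V]$ on $V=\fs\fo(q,\br)$, which gives Zariski openness, and then to write down an explicit element of $A_m$.

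For $D_m$ I will use the characteristic-polynomial-valued maps already constructed in this appendix. In the even case $q=2k$, take $D_m(Z)$ to be the classical discriminant in $t$ of the monic polynomial $\psi_m(Z)(t)=\det(tI-Z)\cdot\det(tI-mZ)$ of degree $2q$ from Proposition 10.10. In the odd case $q=2k+1$, take instead the discriminant in $t$ of $\tilde\psi_m(Z)(t)=\psi_m(Z)(t)/t^2$ from Corollary 10.12, a monic polynomial of degree $2q-2$. Since the discriminant of a monic polynomial of fixed degree is a universal polynomial in its coefficients, and the coefficient functionals on $\br[t]$ pull back under $\psi_m$ or $\tilde\psi_m$ to polynomial maps $V\to\br$ (by part~2 of Lemma~10.8), the chain rule (Proposition~10.3) shows $D_m:V\to\br$ is a polynomial map.

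Next I verify the characterization $Z\in A_m\iff D_m(Z)\ne 0$. In the even case, skew symmetry forces the nullity of $Z$ to be even, so under condition~1) no eigenvalue of $Z$ is $0$; hence $\det(tI-Z)$ and $\det(tI-mZ)$ each have $q$ distinct nonzero roots, and their product $\psi_m(Z)$ has $2q$ distinct roots precisely when in addition no eigenvalue $\lambda_i$ of $Z$ equals $m\lambda_j$ for any $j$, which is exactly condition~2). In the odd case, condition~1) forces $0$ to be an eigenvalue of exact multiplicity one in $\det(tI-Z)$ and in $\det(tI-mZ)$, so dividing by $t^2$ yields $\tilde\psi_m(Z)(t)=(\det(tI-Z)/t)\,(\det(tI-mZ)/t)$ as a product of two monic polynomials of degree $q-1$, each with distinct nonzero roots; the same reasoning then shows $\tilde\psi_m(Z)$ has distinct roots iff both 1) and 2) hold. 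Conversely, any failure of 1) forces a repeated root, either inside one of the two factors or at $0$ in the odd case when $0$ has multiplicity $\ge 3$; and any failure of 2) forces a shared nonzero root between the two factors. This proves $V-A_m=\{D_m=0\}$ is Zariski closed, so $A_m$ is Zariski open.

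For nonemptiness I exhibit a concrete $Z_0\in A_m$. Let $Z_0$ be block diagonal with $k=[q/2]$ skew $2\times 2$ blocks, the $j$-th being $\left(\begin{array}{cc}0&\mu_j\\-\mu_j&0\end{array}\right)$, together with a trailing $1\times 1$ zero block when $q$ is odd. Choose positive reals $\mu_1,\dots,\mu_k$ whose pairwise ratios are all irrational, for example $\mu_j=\sqrt{p_j}$ with $p_j$ the $j$-th prime. Then the eigenvalues of $Z_0$ are $\{\pm i\mu_1,\dots,\pm i\mu_k\}$ (together with $0$ when $q$ is odd), all distinct, and no ratio $\pm\mu_i/\mu_j$ is an integer, so $Z_0\in A_m$. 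The main obstacle is the parity split together with the $0$-eigenvalue bookkeeping in the odd case: one must verify that dividing by $t^2$ in the definition of $\tilde\psi_m$ matches exactly the multiplicity of $0$ as a common root of the two characteristic polynomials whenever $Z$ satisfies condition~1), so that the discriminant of $\tilde\psi_m(Z)$ detects precisely the failure of 1) or 2) and nothing else. Once this is pinned down the remaining steps are routine.
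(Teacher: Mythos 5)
Your argument is correct and follows essentially the same route as the paper: form the monic product $\psi_m(Z)(t)=\det(tI-Z)\det(tI-mZ)$ (or $\tilde\psi_m=\psi_m/t^2$ when $q$ is odd), show it is a polynomial map using the Section~10 machinery, and detect $A_m$ as the locus where a discriminant built from those coefficients does not vanish --- the ``universal polynomial in the coefficients'' you invoke is precisely what Lemma~10.15 produces as the polynomial map $g$ with $D(f)=\pi_0(f)^{-N'}g(f)$, and for monic $f$ one has $D(f)=g(f)$. Your write-up is marginally tidier in observing that $\{Z:D_m(Z)\ne 0\}$ already forces the eigenvalues of $Z$ to be distinct, so $A_m$ is the nonvanishing locus of a single polynomial rather than the redundant triple intersection $O\cap O_1\cap\{g\circ\psi_m\ne 0\}$ in the paper, and you also supply an explicit block-diagonal witness for nonemptiness, which the paper leaves implicit.
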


\noindent $\mathbf{Remarks}$  

	1)  The eigenvalues of $Z \in \fs \fo(q,\br)$  lie in $i \br$, so the ratio of any two nonzero eigenvalues lies in $\br$.
	
	2)  If $O_{m} = A_{m} \cap A_{-m}$, then $O_{m}$ is nonempty and Zariski open in $\fs \fo(q,\br)$ by 2) of the Proposition.  The set $O_{m}$ appears in the statement of Proposition 9.1.
\newline

\noindent We shall consider separately the cases that q is even or odd, which correspond to Proposition 10.10 and Corollary 10.12 respectively.  Before we can address either case we need the following 

 \begin{proposition}  Let K be a field of characteristic zero, and let $\overline{K}$ be an algebraically closed field that contains K.  For a positive integer k let  $K_{k}[t] = K-span \{1,t,^{2}, ... , t^{k} \} \subset K[t]$. Let $O_{k} = \{f(t) \in K_{k}[t] : f(t)~ \rm{has~ k~ distinct~ roots~ in} ~\overline{K} \}$.  Then $O_{k}$  is a nonempty Zariski open subset of $K_{k}[t]$.  
\end{proposition}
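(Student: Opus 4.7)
Under the identification $K_k[t] \cong K^{k+1}$ given by $f = \sum_{i=0}^{k} a_i t^i \mapsto (a_0, \ldots, a_k)$, each coefficient function $a_i$ is a linear functional on $K_k[t]$, hence a polynomial function in $K[K_k[t]]$. The condition ``$f$ has $k$ distinct roots in $\overline{K}$'' splits as (i) $\deg f = k$, i.e.\ $a_k(f) \neq 0$, so that $f$ has exactly $k$ roots in $\overline{K}$ counted with multiplicity, and (ii) $f$ has no repeated root in $\overline{K}$. My plan is to realize both as polynomial nonvanishing conditions.

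Condition (i) is simply $a_k(f) \neq 0$. For (ii) I would define $R : K_k[t] \to K$ to be the determinant of the $(2k-1) \times (2k-1)$ Sylvester matrix built from the coefficients $a_0, \ldots, a_k$ of $f$ (formally of degree $k$) together with the coefficients $a_1, 2 a_2, \ldots, k a_k$ of the formal derivative $f'$. Since a determinant is polynomial in its entries and each entry is linear in $a_0, \ldots, a_k$, the function $R$ lies in $K[K_k[t]]$. The classical theory of the resultant shows that when $a_k \neq 0$ (so $\deg f = k$), the condition $R(f) = 0$ holds iff $f$ and $f'$ share a common root in $\overline{K}$, equivalently (since $\mathrm{char}\, K = 0$) iff $f$ has a repeated root. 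Hence $f \in O_k$ iff $a_k(f) \neq 0$ and $R(f) \neq 0$, so
\[ K_k[t] \setminus O_k \;=\; \{f : a_k(f) = 0\} \cup \{f : R(f) = 0\} \;=\; \{f : a_k(f) \cdot R(f) = 0\}, \]
which is the zero set of a single polynomial function on $K_k[t]$. Therefore $O_k$ is Zariski open.

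For nonemptiness, since $\mathrm{char}\, K = 0$ the elements $1, 2, \ldots, k$ are distinct in $K$, so $f_0(t) = \prod_{j=1}^{k}(t-j)$ lies in $K_k[t]$ and has $k$ distinct roots in $K \subseteq \overline{K}$, giving $f_0 \in O_k$. The main technical point is verifying that the formal Sylvester determinant $R$ is a genuine polynomial function on all of $K_k[t]$; this is automatic from the determinantal expansion, and the possibly erratic behavior of $R$ on the locus $\{a_k = 0\}$ is irrelevant because that locus is already absorbed into the product $a_k \cdot R$ defining the complement of $O_k$.
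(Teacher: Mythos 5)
Your proof is correct, and it takes a genuinely different route from the paper's. Both arguments isolate the degree condition (via nonvanishing of the leading coefficient) and the squarefree condition, but they detect squarefreeness by different algebraic devices. The paper works with the discriminant $D(f) = \prod_{i<j}(\alpha_i - \alpha_j)^2$ defined in terms of the roots, and must then invoke the fundamental theorem of symmetric polynomials (writing $D$ as a polynomial $h$ in the elementary symmetric functions) plus Vieta's formulas to show that $a_0^{N'} D(f)$ is a polynomial $g(f)$ in the coefficients; clearing the denominator $a_0^{N'}$ is the technical crux of its Lemma 10.15. You instead pass to the resultant $\operatorname{Res}(f, f')$ realized as the Sylvester determinant, which is a polynomial in the coefficients by construction --- no symmetric-function argument and no denominator-clearing needed. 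The two quantities are of course proportional when $\deg f = k$ (the discriminant is, up to sign and a factor of the leading coefficient, the resultant of $f$ with $f'$), so the underlying algebra is the same; what your approach buys is a shorter path to polynomiality at the cost of citing the classical vanishing criterion for the resultant. You also supply an explicit witness $\prod_{j=1}^k (t-j)$ for nonemptiness, which the paper simply asserts. One small point worth keeping explicit, which you did flag: on the locus $\{a_k = 0\}$ the formal Sylvester determinant need not agree with the resultant of the lower-degree polynomials $f, f'$, but since that locus is already contained in the complement of $O_k$ via the factor $a_k$, the discrepancy is harmless.
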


\begin{proof}  Clearly $O = O_{k}$ is nonempty, so it suffices to prove that O is Zariski open. Fix an integer $k \geq 1$ and let $\pi_{0} : K_{k}[t] \rightarrow K$ be defined by $\pi_{0}(a_{0}t^{k} + a_{1}t^{k-1} + ... + a_{k-1}t + a_{k}) = a_{0}$.  Let $O_{1}= \{f(t) \in K_{k}[t] : \pi_{0}(f) \neq 0 \}$.  The map $\pi_{0}$ is linear hence polynomial (cf. Corollary 10.2). The Zariski open subset $O_{1}$ consists of those polynomials in $K_{k}[t]$ of degree k, or equivalently, those polynomials in $K_{k}[t]$ with k roots in $\overline{K}$.
\newline

\noindent For f(t) $\in O_{1}$ let D(f) $\in\overline{K}$ be given by $D(f) = \prod_{i<j} (\alpha_{i} - \alpha_{j})^{2}$, where $\{\alpha_{1},...,\alpha_{k} \}$ is the set of roots of f(t) in $\overline{K}$.  Clearly f(t) has k distinct roots in $\overline{K} \Leftrightarrow D(f) \neq 0$. 
	
\begin{lemma}  There exists a polynomial map g : $K_{k}[t] \rightarrow K$ and a positive integer $N'$ such that $D(f) = (1/\pi_{0}(f))^{N'} g(f)$ if f $\in O_{1}$.
\end{lemma}

\noindent We postpone the proof of the Lemma to complete the proof of the Proposition.  We assert that $O = O_{1} \cap O_{2}$, where O and O$_{1}$ have been defined above and $O_{2} = \{f(t) \in K_{k}[t] : g(f) \neq 0 \}$.  It will then follow that O is Zariski open in $K_{k}[t]$.
\newline
	
\noindent Let f(t) $\in$ O be given.  Then $ \pi_{0}(f) \neq 0$ since f has k roots in $\overline{K}$, and hence f $\in O_{1}$.  Moreover, $0 \neq D(f) = (1 / \pi_{0}(f)) ^{N} g(f)$ since f has k distinct roots in $\overline{K}$.  Hence $f \in O_{2}$, which shows that $O \subseteq O_{1} \cap O_{2}$.  Conversely, let f $\in O_{1} \cap O_{2}$ be given. Then $\pi_{0}(f) \neq 0$ since f(t) $\in O_{1}$, and we conclude that f(t) has k roots in $\overline{K}$.  In addition, $D(f) = (1/ \pi_{0}(f))^{N} g(f) \neq 0$ since f(t) $\in O_{2}$.  Hence f has k distinct roots in $\overline{K}$, which shows that $O_{1} \cap O_{2} \subseteq O$.
\newline
	
\noindent We now begin the proof of the Lemma.  Let $f(t) = a_{0}t^{k} + a_{1}t^{k-1} + ... + a_{k-1}t + a_{k}$ be an element of $K_{k}[t]$ with $a_{0} \neq 0$, and let $\{\alpha_{1},...,\alpha_{k} \}$ be the roots of f(t).  If $\overline{f}(t) = (1/a_{0}) f(t) = t^{k} + (a_{1}/a_{0})t^{k-1} + ... + (a_{k-1}/a_{0}) t + (a_{k}/a_{0})$, then $\overline{f}(t)$ has the same roots as f(t).  Since the coefficient of $t^{k}$ for $\overline{f}(t)$ is 1 we obtain $\overline{f}(t) = \prod_{i=1}^{k} (t - \alpha_{i}) = \sum_{i=1}^{k} (-1)^{i} s_{i}(\alpha_{1},...,\alpha_{k}) t^{i}$, where $s_{i}(t_{1},...,t_{k})$ is the i$^{th}$ elementary symmetric function of the variables $t_{1},...,t_{k}$  .  Comparing the two expressions for $\overline{f}(t)$ yields
	
		(1)  $(-1)^{i} a_{k-i}/a_{0} = s_{i}(\alpha_{1},...,\alpha_{k})$ for $1 \leq i \leq k$
		
\noindent If $D(t_{1},...,t_{k})$ is the discriminant polynomial given by $D(t_{1},...,t_{k}) = \prod_{i<j} (t_{i} -t_{j})^{2}$, then the polynomial $D(t_{1},...,t_{k})$ is symmetric in the variables $t_{1},...,t_{k}$ and has coefficients in $\bz \subset K$.   By the theory of symmetric polynomial functions it is known (cf. Theorem 1 of chapter IV, p.62 of [B])  that there exists a polynomial $h(t_{1},...,t_{k})$ in the variables $t_{1},...,t_{k}$ with coefficients in $\bz$ such that $D(t_{1},...,t_{k}) = h(s_{1},...,s_{k})$.  From (1) we obtain
	
		(2)  $D(f) = h(\frac{-a_{k-1}}{a_{0}},\frac{a_{k-2}}{a_{0}},...,\frac{(-1)^{k-1}a_{1}}{a_{0}})$
		
\noindent since $D(f) = D(\alpha_{1},...,\alpha_{k}) = h(s_{1}(\alpha_{1},...,\alpha_{k}),..., s_{k}(\alpha_{1},...,\alpha_{k}))$.  By inspection there exists a positive integer $N'$, depending only on h, such that 

	(3) $a_{0}^{N'}~ h(\frac{-a_{k-1}}{a_{0}},\frac{a_{k-2}}{a_{0}},...,\frac{(-1)^{k-1}a_{1}}{a_{0}}) = h'(a_{0},a_{1},...,a_{k})$
	
\noindent where h$'$ is a polynomial in the variables $t_{0},t_{1},...,t_{k}$ with coefficients in $\bz$.  Let      $g = h' \circ \pi$, where $\pi : K_{k}[t] \rightarrow K^{k+1}$ is the isomorphism given by $\pi(a_{0}t^{k} + a_{1}t^{k-1} + ... + a_{k-1}t + a_{k}) = (a_{0},a_{1},...,a_{k})$.  Then $(1/a_{0})^{N'} g(f) = (1/a_{0})^{N'} h'(a_{0},a_{1},...,a_{k}) = D(f)$ by (2) and (3).  Note that g is a polynomial function such that $g(K_{k}[t]) =  h'(K^{k+1}) \subset K$ since h$'$ has coefficients in $\bz \subset K$.  This completes the proof of the Lemma.
\end{proof}

\noindent  We now prove Proposition 10.13, and we consider first the case that q is even.  For each positive integer k let $\br_{k}[t] = \br-span~\{1,t,t^{2}, ... ,t^{k} \} \subset \br[t]$. For each integer m with $|m| \geq 2$ define a map $\psi_{m} : \fs \fo(q,\br) \rightarrow \br_{2q}[t]$ by $\psi_{m}(Z) = det (tI - Z) \cdot det (tI - mZ)$ for all $Z \in \fs \fo(q,\br)$.  The map $\psi_{m}$ is a polynomial map by Proposition 10.10.  If $Z \in A_{m}$, then the eigenvalues of Z are all nonzero since they are distinct and q is even. Hence the roots of $\psi_{m}(Z)$ are all nonzero.
\newline

\noindent  We use the notation of the proof of Proposition 10.14.  Let $O_{1} = \{f(t) \in \br_{2q}[t] : \pi_{0}(f) \neq 0 \}$.  By the proof of Proposition 10.14 there exists a polynomial map $ g : \br_{2q}[t] \rightarrow \br$ and a positive integer $N'$ such that $D(f) = \frac{1}{\pi_{0}(f)^{N'}} g(f)$ for all $f \in O_{1}$.  By inspection $\psi_{m}(\fs \fo(q,\br) \subset O_{1}$ and $(D \circ \psi_{m})(Z) = \frac{1}{\pi_{0}(\psi_{m}(Z))^{N'} }(g \circ \psi_{m})(Z)$ for all $Z \in \fs \fo(q,\br)$.
\newline

\noindent  Let O be the elements in $\fs \fo(q,\br)$ whose eigenvalues are all distinct.  As noted above, if $Z \in O$, then the eigenvalues of Z are all nonzero since q is even.  The set O is a Zariski open subset of $\fs \fo(q,\br)$ by Proposition 10.14, and the set $O_{1}$ is clearly a Zariski open subset of $\fs \fo(q,\br)$ by its definition.  Since $g \circ \psi_{m} : \fs \fo(q,\br) \rightarrow \br$ is a polynomial map by the chain rule it suffices to show that $A_{m} = \{Z \in O \cap O_{1} : g \circ \psi_{m}(Z) \neq 0 \}$.
\newline

\noindent For $Z \in \fs \fo(q,\br)$ the eigenvalues of mZ are m times the eigenvalues of Z.  If $Z \in O \cap O_{1}$, then the ratio of two distinct eigenvalues of Z equals m $\Leftrightarrow$ the polynomials  $det (tI -Z)$ and $det(tI - mZ)$ have a common root $\Leftrightarrow \psi_{m}(t)$ has a root of multiplicity at least two $\Leftrightarrow (g \circ \psi_{m})(Z) = 0$.  This concludes the proof in the case that q is even.
\newline

\noindent  We consider the case that q is odd.  The proof of the Proposition in this case is basically the same as in the case that q is even, but the proof must be modified since $\psi_{m}(t) = det (tI - Z) \cdot det (tI - mZ)$ is divisible by $t^{2}$.  We use Corollary 10.12 to overcome this technical problem. 
\newline

\noindent Let O and $O_{1}$ be the same Zariski open sets defined above in the case that q is even.  For an integer m with $|m| \geq 2$ we define $\tilde{\psi_{m}} : \fs \fo(q,\br) \rightarrow \br[t]$ by $\tilde{\psi_{m}}(t) = det (tI - Z) det (tI - mZ) / t^{2}$.  The map $\tilde{\psi_{m}}$ is a polynomial map by Corollary 10.12.  If $Z \in O \cap O_{1}$, then the roots of $det (tI - Z)$ are distinct and the roots of $det (tI - mZ)$ are distinct.  Hence zero is a root of multiplicity one for both polynomials.  It follows that  the roots of $\tilde{\psi_{m}}(Z)$ are all nonzero for all $Z \in O \cap O_{1}$.
\newline

\noindent Arguing as above, for $Z \in O \cap O_{1}$ the ratio of two nonzero eigenvalues of Z equals m $\Leftrightarrow \tilde{\psi_{m}}(Z)$ has a root of multiplicity at least two $\Leftrightarrow (g \circ \tilde{\psi_{m}})(Z) = 0$, where $g : \br_{2q}[t] \rightarrow \br$ is the polynomial in the proof of the case where q is even.  This completes the proof of the Proposition.
\newline

\noindent Patrick Eberlein

\noindent Department of Mathematics

\noindent University of North Carolina

\noindent Chapel Hill, NC 27599

\noindent USA

\noindent pbe@email.unc.edu
\newline
 
\noindent $\mathbf{References}$
\newline

\noindent [B]  N. Bourbaki, Elements of Mathematics, Algebra II, Chapters 4-7, Springer, 1990 (English Edition).
\newline

\noindent [CE]  J. Cheeger and D. Ebin, Comparison Theorems in Riemannian Geometry, North-Holland, 1975.
\newline

\noindent [E1] P. Eberlein, "Geometry of 2-step nilpotent groups with a left invariant metric", Ann. Scient. Ecole Normale Sup. (1994), 611-660.
\newline

\noindent [E2] -------, "Riemannian submersions and lattices in 2-step nilpotent Lie groups", Comm. in Analysis and Geom. 11(3), (2003), 441-488.
\newline

\noindent [GW]  C. Gordon and E. Wilson, "Continuous families of isospectral Riemannian manifolds which are not locally isometric", Jour. Diff. Geom. 47 (3), (1997), 504-529.
\newline

\noindent [J] N. Jacobson, Lectures in Abstract Algebra, vol. II, Van Nostrand, New York, 1953.
\newline

\noindent [JLP] C. Jang, T. Lee and K. Park, "Conjugate loci of 2-step nilpotent Lie groups" satisfying $J_{z}^{2} = \langle Sz,z \rangle A$", J. Korean Math. Soc. 45(6),(2008), 1705-1723.
\newline

\noindent [M] J. Milnor, Morse Theory, Annals of Math. Studies 51, Princeton University Press, Princeton, 1963.

\noindent [MS] J. Milnor and J. Stasheff, Characteristic Classes, Annals of Math Studies 76, Princeton University Press, Princeton, 1974.
\newline

\noindent [R]  M.S. Raghunathan,Discrete Subgroups of Lie groups, Springer, New York, 1972.

\end{document}